\newcommand{\K}{\mathbb{K}}
\newcommand{\balance}{\nabla}
\newcommand{\rfield}{\mathcal{L}}
\newcommand{\vgroup}{\R}
\newcommand{\hahnseries}[2]{#1[[t^{#2}]]}
\def\R{\mathbb{R}}
\def\Rmax{\mathbb{T}}
\def\rmax{\Rmax}
\def\tmax{\Rmax}
\def\Smax{\mathbb{S}}
\def\smax{\mathbb{S}}
\def\Smaxv{\Smax^{\vee}}
\def\cK{\mathbb{K}}
\def\IsJ{I\backslash J}
\def\JsI{J\backslash I}
\def\<#1,#2>{\langle #1,#2\rangle} 
\newcommand{\semiring}{\mathcal{A}} 
\renewcommand{\geq}{\geqslant}
\renewcommand{\leq}{\leqslant}
\renewcommand{\preceq}{\preccurlyeq}
\renewcommand{\succeq}{\succcurlyeq}
\newcommand{\sgeq}{\succeq}
\newcommand{\sleq}{\preceq}
\newcommand{\chech}{\vee}
\newcommand{\supp}{\operatorname{supp}}
\newcommand{\sauf}[1]{\widehat{#1}}
\newcommand{\ordpolar}{\circ}
\newcommand{\sign}{\operatorname{sgn}}
\newcommand{\diag}{\operatorname{diag}}
\newcommand{\puiseuxpos}{\cK_{\geq 0}}
\newcommand{\puiseux}{\cK}
\def\0{\mathbf{0}}
\DeclareMathAlphabet{\mathbbold}{U}{bbold}{m}{n}
\newcommand{\zero}{\mathbbold{0}}
\newcommand{\unit}{\mathbbold{1}}
\def\1{\mathbf{1}}
\newcommand{\polar}{\circ}
\newcommand{\polarcouple}{\triangleright}
\newcommand{\polarsingle}{\triangleleft}
\newcommand{\signedpolar}{\circ}
\def\cp{\operatorname{CP}_n}
\def\cpk{\operatorname{CP}_{n,k}}
\def\cocp{\cp^{\polar}}
\newcommand{\psd}{\operatorname{PSD}_n}
\newcommand{\pd}{\operatorname{PD}_n}
\newcommand{\psdk}{\operatorname{PSD}_k}
\newcommand{\nn}{\operatorname{NN}_n}
\newcommand{\psdtwo}{\operatorname{PSD}_2}
\def\val{\operatorname{val}}
\def\sval{\operatorname{sval}}
\def\int{\operatorname{int}}
\def\cl{\operatorname{cl}}
\newcommand{\csdp}{\operatorname{CPSD}_n}
\newcommand{\csdpk}{\operatorname{CPSD}_{n,k}}
\newcommand{\cpsd}{\operatorname{CPSD}_n}
\newcommand{\cpsdk}{\operatorname{CPSD}_{n,k}}
\newcommand{\newplus}{\,\widehat{\oplus}\,}
\newcommand{\newplusv}{\,\widehat{\oplus}^\vee\,}
\newcommand{\nnmat}{\puiseuxpos^{n\times n}}
\newcommand{\com}{\operatorname{com}}
\newcommand{\bfA}{\mathbf{A}}
\newcommand{\bfb}{\mathbf{b}}
\newcommand{\bfx}{\mathbf{x}}
\newtheorem{example}[theorem]{Example}
\newtheorem{remark}[theorem]{Remark}
\title{Signed Tropicalization of Polar Cones}
\author{Marianne Akian, Xavier Allamigeon, St\'ephane Gaubert\thanks{Inria and CMAP, \'Ecole polytechnique, IP Paris, CNRS
    (\email{FirstName.Name@inria.fr}).}
  \and Serge\u{\i} Sergeev\thanks{University of Birmingham, School of Mathematics, Edgbaston, B15 2TT, UK
		(\email{s.sergeev@bham.ac.uk}). Corresponding author.}}
\author[M. Akian]{Marianne Akian}
\author[X. Allamigeon]{Xavier Allamigeon}
\author[S. Gaubert]{St\'ephane Gaubert}
\address{Address of Marianne Akian, Xavier Allamigeon and St\'{e}phane Gaubert: INRIA and CMAP \'{E}cole Polytechnique, IP Paris, CNRS, France}
\email{Marianne Akian, Xavier Allamigeon, St\'ephane Gaubert: FirstName.Name@inria.fr}
\author[S. Sergeev]{Serge\u{\i} Sergeev}
\address{Address of Serge\u{\i} Sergeev: University of Birmingham, School of Mathematics, Edgbaston, B15 2TT, UK}
\thanks{This work was initiated during S. Sergeev's visits to CMAP \'{E}cole Polytechnique, which were in part financially supported by 
EPSRC grant EP/P019676/1}
\email{Serge\u{\i} Sergeev: s.sergeev@bham.ac.uk}
\begin{document}
\maketitle

\begin{abstract}
  We study the tropical analogue of the notion of polar of a cone, working over the semiring of tropical numbers with signs. We characterize the cones which arise as polars of
  sets of tropically nonnegative vectors by an invariance property
  with respect to a tropical analogue of Fourier--Motzkin elimination.
  We also relate tropical polars with
  images by the nonarchimedean valuation of classical polars over
  real closed nonarchimedean fields and show, in particular, that for semi-algebraic sets over such fields, the operation of taking the polar commutes with the operation of signed valuation
  (keeping track both of the nonarchimedean valuation and sign).
  We apply these results to characterize images
  by the signed valuation of classical cones of matrices, including the cones of
  positive semidefinite matrices, completely positive matrices, completely positive semidefinite matrices, and their polars, including the cone of co-positive matrices, showing that hierarchies of classical cones collapse under tropicalization.
  We finally discuss an application of these ideas to optimization with signed tropical numbers.
\end{abstract}

\begin{keywords}
signed tropicalization, polar cone, completely positive, copositive, symmetrized tropical semiring
\end{keywords}

\begin{MSCcodes}
15A80, 49N15, 90C24, 14T90
\end{MSCcodes}

\section{Introduction}

\subsection{Motivation and context}
The present paper is motivated by a wish to develop a useful and fully functional analogue of optimization theory over the tropical semifield, which is the set of real numbers with adjoined $-\infty$, 
equipped with the operations of ``addition'' being the maximum of two numbers and ``multiplication'' being the ordinary addition. Since any real number exceeds $-\infty$, all real numbers are ``positive'' in the tropical sense, which entails that there is no tropical analogue of subtraction.

Much of tropical linear algebra, tropical convexity and their applications were developed
in a ``subtraction free'' setting, see in particular \cite{BSS07,cgq02,DS04,litvinov00,Zimm77}, and the monographs \cite{But,HOW} and \cite{JosBook}. However, the link between classical notions
(in optimization and real geometry) and their tropical analogues
can be better understood by working over an extension
of tropical numbers that encodes a sign information. This approach
was developed by M.~Plus~\cite{maxplus90b}, where the symmetrized extension of the tropical semifield
was introduced, leading to a tropical analogue of Cramer theorem: see, e.g., the monograph by Baccelli et al.~\cite{BCOQ} and the articles by Akian, Gaubert and Guterman~\cite{guterman,AGGuterman2014Cramer}. This extension involves signed tropical numbers.
The same numbers arose in Viro's patchworking method, allowing one to construct parametric families of real algebraic curves with a prescribed topology, see Itenberg and Viro~\cite{itenberg_viro}. In a nutshell, Viro's method shows that such a parametric
family admits a piecewise-linear ``log-limit'', a tropical curve.
Signed tropical numbers have been used to study the tropicalization of linear programs,
in relation with open complexity issues in game theory and linear programming, see Allamigeon et al.~\cite{ABGJ2021}
and the references therein.
Recent results in the setting of hyperfields~\cite{gunn,Jell2020,Lorsch22,Viro11}, or of semiring systems~\cite{AGRowen}, have shown a growing interest in signed tropicalizations. Recent works by Loho, Skomra and V\'egh deal with signed extensions of tropical convexity~\cite{LohoSkomra,LohoVegh}.

In classical convexity theory and in optimization, the notion of polar cone is a fundamental one,
at the heart of the duality theory. A polar cone represents the family of half-spaces containing a given set. In the tropical setting, such half-spaces are defined by linear inequalities
whose coefficients are tropical numbers with signs.
Hence, tropical polars can be thought of as sets of vectors whose components are signed tropical numbers.
This raises the question of characterizing the sets which arise
as tropical polars, as well as computing the polars of the tropical analogues
of fundamental classes of classical convex cones -- including positive semidefinite
matrices, completely positive and completely positive semidefinite matrices.
We are also interested in relating tropical polar cones with
non-archimedean valuations or ``log-limits'' of classical polar cones
defined over a real closed non-archimedean field.
We address these questions in the present paper. 

\subsection{Main results}
After recalling some of the basic definitions, we focus on the polar cones of sets of tropically positive vectors (see \Cref{s:polval}). Our first main result, \Cref{coro-polar}, characterizes these polar cones, showing they are precisely the sets of signed vectors that are closed in the topology of signed tropical numbers, and that are stable under a special operation of addition,
encoding a Fourier--Motzkin elimination rule.
We also show that polar cones are precisely the signed parts of the closed monotone precongruences initially studied by Gaubert and Katz~\cite{GK-09}. Then, we study the relation between tropical
polar cones and classical polar cones over ordered nonarchimedean fields. We
consider the notion of {\em signed valuation}, taking into account not only
the classical valuation but also the sign, already studied by Allamigeon, Gaubert and Skomra~\cite{AGS} as well as by Jell, Scheiderer and Yu~\cite{Jell2020}.
Our second main result, \Cref{t:polval}, shows that for semialgebraic subsets
over real closed non-archimedean fields with a surjective valuation
to the additive group of real numbers, the operations of taking
the polar and taking the signed valuation commute.

We next focus on the tropical analogues of classical cones (see \Cref{sec-completely}).
Here we first observe that the definitions of positive semidefinite matrices and completely positive matrices naturally extend to matrices over tropical signed numbers and we also define a tropical analogue of completely positive semidefinite matrices (see, e.g., Burgdorf, Laurent and Piovesan~\cite{Burg15} for background on completely positive semidefinite matrices). Over the field of real numbers, and more generally, over any real closed field,
the classical matrix cones form a hierarchy, with inclusions shown in \Cref{e:primal-incl} below.
Their polars, which include the cone of copositive matrices, yield a dual hierarchy displayed in \Cref{e:dual-incl}. In \Cref{t:collapse1} and \Cref{t:collapse2} we show that these hierarchies collapse under the tropicalization map, meaning that all inclusions in \Cref{e:primal-incl} and \Cref{e:dual-incl} become equalities in the tropical setting, or that the different cones in this hierarchy, over a real closed non-archimedean field, have the same images by the signed non-archimedean valuation. These results rely on surprisingly simple characterizations of tropical positive semidefinite matrices, tropical copositive matrices (see \Cref{th-psdsmax} and \Cref{t:cocptrop} below, extending a result of Yu~\cite{Yu15}). We also use a characterization of Cartwright and Chan of tropical completely positive matrices~\cite{CartChan12}.
and rely on \Cref{t:polval} (commutation between the operations of taking the signed valuation and the polar). 
In \Cref{s:optimization} we present some first elements of optimization over signed tropical numbers:
the notion of minimization turns out to be more delicate that in the classical setting,
owing to the restricted transivity of the ``order'' relation.
As 
a simple but prominent example, we give an explicit solution
of the minimization problem
for univariate polynomial in~\Cref{p:univariate}. Then, we consider in~\Cref{th-nphard} the tropical analogue
of quadratic programming. We show that testing the nonnegativity of a quadratic form on the orthant
is polynomial time solvable in the tropical world (this is perhaps surprising since the analogous problem
in the classical world is co-NP complete). However, the general tropical quadratic
optimization problem is NP-hard (\Cref{th-nphard}).

The new contributions of this paper can be thus summarized as follows: 1) tropical polar cones are introduced as sets of signed vectors and characterized as closed sets stable under a special addition operation that eliminates some coordinates; 2) tropical analogues of classical matrix cones are introduced and characterized, it is shown in particular that the classical hierarchies of matrix cones and their polars collapse under the signed nonarchimedean valuation; 3) first elements of tropical optimization over signed tropical numbers, linking in particular
the previous results with tropical quadratic optimization.

\subsection{Related work}


Tropical convexity, not taking signs into account, has been studied in great detail since the works of Zimmermann~\cite{Zimm77}, see for example \cite{AGGoub,BSS07,cgqs04,DS04,JosBook,zbMATH06293792} (among many other publications).
Tropical convex sets and cones have a lot in common with the classical convex cones and convex sets: internal representation in terms of extremal vertices, external representation as intersection of halfspaces, separation theorems, etc.

Duality is one of the topics in tropical convexity that requires a quite different approach except in some special cases (see Gavalec and Zimmermann~\cite{GavZim}), and the main reason for this is the absence of a genuine subtraction in tropical algebra. To this end, Gaubert and Katz~\cite{GK-09} introduced and characterized
{\em bi-polars}
of tropical convex sets, avoiding the use of signed tropical numbers. In this setting, bi-polars
were characterized as ``closed pre-congruences''.
\Cref{car-polar} and~\Cref{coro-polar} refine
this result, showing that a closed pre-congruence can be canonically represented
by a more concise object, the signed part of this pre-congruence, called ``signed elimination cone'',
which is characterized by a stability property with respect to Fourier--Motzkin type elimination.


Our results should be compared with recent ones by Loho and V\'egh~\cite{LohoVegh},
and then by Loho and Skomra~\cite{LohoSkomra}, studying
different types of convexity over symmetrized semirings (namely, TO-convexity, TC-convexity and hyperfield convexity). In particular, we are characterizing the tropical polars as sets that are closed and stable under a special addition operation that eliminates some coordinates, and this operation can be seen as closely related to the left-sum and TC-convexity defined in~\cite{LohoSkomra}.

The study of tropicalizations of classical matrix cones goes back to the work of Yu~\cite{Yu15},
who characterized the image by the unsigned nonarchimedean valuation of the cone
of positive semidefinite matrices. The present results refine this approach
by taking the sign information into account. We rely on results
of~\cite{AGS,Jell2020} characterizing the images by the (signed) valuation
of semialgebraic sets over ordered non-archimedean fields.
The fact that nontrivial hierarchies
of classical matrix cones collapse under the (signed) tropicalization map (\Cref{t:collapse1,t:collapse2})
is reminiscent of the fact that the Helton-Nie conjecture, which was disproved by Scheiderer~\cite{scheiderer_helton_nie},
has a valid tropical analogue as shown by Allamigeon, Gaubert and Skomra~\cite{mega2017jsc}; these different results reveal a loss
of information inherent to the tropicalization process.
We also note that the tropicalization of semialgebraic convex cones related to the moment problem was recently studied in Blekherman et al.~\cite{Yu2022}.

\section{Basic definitions}
\subsection{The symmetrized tropical semiring and its relation with real Puiseux series}

Let $\rmax$ be the {\em max-plus} or {\em tropical} semifield, that is the set of real
numbers $\R$ with $-\infty$ endowed with $\max$ operation as addition,
denoted $a\oplus b=\max(a,b)$ 
and usual addition as multiplication, denoted
$a\odot b=a+b$, with zero $-\infty$,
and unit $0$.
Recall that a {\em commutative semiring} is a set $\semiring$, 
 equipped with an addition $(a,b)\mapsto a\oplus b$ that is associative, commutative, and has a neutral element $\zero_{\semiring}$, together with a multiplication $(a,b)\mapsto a\odot b$ that is
 associative, commutative, has a unit $\unit_{\semiring}$, distributes over addition,
 and is such that $\zero_{\semiring}$ is absorbing.
It is a semifield if the nonzero elements have inverses for the multiplication.
In $\rmax$, there are no opposites for the addition, which motivated the following construction of the symmetrized semiring by M.~Plus~\cite{maxplus90b}.
The following definitions, except the order relation $\sleq$, can be found in the monograph by Baccelli et al.~\cite{BCOQ}.

Consider the set $\tmax^2:=\tmax\times \tmax$ endowed with the operations $\oplus$ and $\odot$ defined as:
\begin{align*}
&(a^+,a^-) \oplus (b^+,b^-) =(a^+\oplus b^+, a^- \oplus b^-),\\
&(a^+,a^-) \odot (b^+,b^-) = (a^+\odot b^+ \oplus a^- \odot b^-, a^+ \odot b^- \oplus a^- \odot b^+)\enspace,
\end{align*}
with $(-\infty, -\infty)$ as the zero element and $(0, -\infty)$ as the unit element, also denoted by $\zero_{\tmax^2}$ and $\unit_{\tmax^2}$, respectively.
Then, $\tmax^2$ is a semiring.
We define the operations  $\ominus$ and $|\cdot|$ 
\begin{align*}
\ominus (a^+,a^-) = (a^-, a^+),
\qquad |(a^+,a^-)| = a^+\oplus a^-\enspace.
\end{align*}
Then, $a\in \tmax^2\mapsto \ominus a\in \tmax^2$ is an additive
morphism, which allows us to 
write $a \oplus (\ominus b) = a \ominus b$ as usual.
Moreover, $a\in \tmax^2\mapsto |a|\in \tmax$ is a semiring morphism.


The semiring $\tmax$ is endowed with a natural order, which coincides with the usual order on real numbers:
\[ a\leq b\Leftrightarrow b=a\oplus b \Leftrightarrow b=a\oplus c\quad\text{for some}\;  c\enspace .\]
Based on this relation, the following relations can be defined on $\tmax^2$,
\begin{align*}
  (a^+,a^-) \sleq (b^+,b^-) &\iff a^+ \oplus b^- \leq a^- \oplus b^+ \enspace ,\\
  (a^+,a^-) \prec (b^+,b^-) &\iff a^+ \oplus b^- < a^- \oplus b^+ \enspace ,  \\
  (a^+,a^-) \balance (b^+,b^-)& \iff    a^+ \oplus b^- = a^- \oplus b^+ \enspace .
\end{align*}
For $a,b\in \tmax^2$, we say that $a$ {\em balances} $b$ when
$a \balance b$.
We define
$\tmax^2_+ := \{(a^+,a^-)\in \tmax^2 \mid a^+>a^-\} \cup\{(-\infty,-\infty)\}$,
$\tmax^2_- := \{(a^+,a^-)\in \tmax^2 \mid a^+<a^-\}\cup\{(-\infty,-\infty)\}$,
$\tmax^2_{\pm}:= \tmax^2_+\cup \tmax^2_-$.

The two relations $\sleq$ and $\balance$ are not transitive: for instance,
if $a=(5,-\infty)$, $b=(7,7)$ and $c=(4,-\infty)$, we have $a\sleq b$,
$a\balance b$,
$b\sleq c$,
$b\balance c$,
but $a\sleq c$ and $a\balance c$ do not hold.
However, we have the following properties:
\begin{proposition}
\label{p:trans}
Let $a,b,c,d\in\tmax^2$.
\begin{enumerate}[label={\rm (\roman*)}]
\item\label{p:trans-1} $a\sleq a$ for any $a\in\tmax^2$; 
\item\label{p:trans-2}  $a\sleq b$ and $b\sleq a$ if and only if $a\nabla b$;
  \item\label{p:trans-bis} $a\sleq b$ if and only if $\ominus b\sleq \ominus a$;
\item\label{p:trans-4}
$a\sleq b \text{ and }c\sleq d \implies a\oplus c\sleq b\oplus d$;
\item\label{p:trans-5}
  $a\sleq b \text{ and } c\succeq \zero_{\tmax^2}
  \implies ac \sleq bc$.
\item\label{p:trans-3} If $a\sleq b$ and $b\sleq c$ and $b\in\tmax^2_{\pm}$ then $a\sleq c$;
\end{enumerate}
\end{proposition}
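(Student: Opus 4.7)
The plan is to handle items (i)--(v) by direct unfolding of the definitions into inequalities in $\rmax$, and to reserve the main effort for item (vi), which is the only truly nonroutine statement.

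Items (i)--(iii) are immediate. Reflexivity $a \sleq a$ becomes $a^+ \oplus a^- \leq a^- \oplus a^+$, an equality. For (ii), combining $a \sleq b$ with $b \sleq a$ gives the two opposite inequalities that together define $a \balance b$. For (iii), $\ominus b \sleq \ominus a$ unfolds to $b^- \oplus a^+ \leq b^+ \oplus a^-$, which is syntactically identical to $a \sleq b$.

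For (iv), I would take the componentwise $\oplus$-sum of the two hypothesis inequalities in $\rmax$ and regroup, using commutativity of $\oplus$. For (v), first note that $c \sgeq \zero_{\tmax^2}$ is precisely $c^- \leq c^+$. Setting $\alpha := a^+ \oplus b^-$ and $\beta := a^- \oplus b^+$, a short expansion of the definition of $\odot$ shows that $a c \sleq b c$ is equivalent to $c^+\alpha \oplus c^-\beta \leq c^-\alpha \oplus c^+\beta$; this follows from $\alpha \leq \beta$ and $c^- \leq c^+$ together with monotonicity of $\odot$ in $\rmax$, since both $c^+ \alpha$ and $c^- \beta$ are bounded by $c^+ \beta$.

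Item (vi) is the main obstacle. I would split on the sign of $b$. If $b = \zero_{\tmax^2}$, the hypotheses reduce to $a^+ \leq a^-$ and $c^- \leq c^+$, whence $a^+ \oplus c^- \leq a^- \oplus c^+$ directly. For $b$ nonzero, by applying (iii) to reverse both relations it suffices to handle $b \in \tmax^2_+$ with $b^- < b^+$. In this case, $b \sleq c$ reads $b^+ \oplus c^- \leq b^- \oplus c^+$; since $b^+ > b^-$, the left-hand side is at least $b^+$ and cannot be absorbed by $b^-$ on the right, forcing $b^+ \leq c^+$. Furthermore, if $c^- > c^+$ held, we would obtain $c^- \leq b^- < b^+ \leq c^+ < c^-$, a contradiction, so $c^- \leq c^+$ as well. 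Combined with $a \sleq b$, namely $a^+ \leq a^- \oplus b^+$, I would then conclude $a^+ \leq a^- \oplus c^+$ and $c^- \leq c^+ \leq a^- \oplus c^+$, which together give $a \sleq c$.
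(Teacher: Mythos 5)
Your proposal is correct and follows essentially the same route as the paper: items (i)--(v) by direct unfolding of the definitions (the paper dismisses these as trivial), and for (vi) the same case split on the sign of $b$, with the negative case reduced to the positive one via (iii) and the zero case handled separately. In the positive case your deductions $b^+\leq c^+$ and $c^-\leq c^+$ are just an equivalent repackaging of the paper's simplified inequality $b^+\oplus c^-\leq c^+$, so the two arguments coincide in substance.
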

\begin{proof}
  \ref{p:trans-1} to \ref{p:trans-5}: 
  Trivial.\\
\ref{p:trans-3}: As $a\sleq b$ and $b\sleq c$, we have $a^+\oplus b^-\leq a^-\oplus b^+$
and $b^+\oplus c^-\leq b^-\oplus c^+$. We consider the following three cases:

{\bf Case 1:} $b^+> b^-$ (or equivalently $b\succ \zero_{\tmax^2}$). In this case the inequalities $a\sleq b$  and  $b\sleq c$ are equivalent to
$a^+\leq a^-\oplus b^+$ and $b^+\oplus c^-\leq c^+$.
 Adding $c^-$ to the first of these inequalities, we obtain $a^+\oplus c^-\leq a^-\oplus b^+\oplus c^-$. Similarly, adding
  $a^-$ to the second inequality, we get 
  $a^-\oplus b^+\oplus c^-\leq a^-\oplus c^+$.
  Using both resulting inequalities and the transitivity of $\leq$, we deduce
  $a^+\oplus c^-\leq a^-\oplus c^+$, which is $a\sleq c$.

  {\bf Case 2:} $b^-> b^+$.
 Then, $\ominus b \succ \zero_{\tmax^2}$, and by \ref{p:trans-bis}
  $\ominus b \sleq \ominus a$  and  $\ominus  c\sleq \ominus b$.
  So using Case 1, we deduce that $\ominus  c\sleq \ominus a$,
  which is equivalent to   $a\sleq c$, by \ref{p:trans-bis}.

  {\bf Case 3:} $b^-=b^+=-\infty$: trivial.
\end{proof}
The order relation over $\tmax^2$ allows one
to denote in a compact manner inequalities
over the real numbers involving piecewise linear terms.
For instance, $a\odot b\sleq c$ can be rewritten as follows with the
usual notation:
\[
\max(a^+ + b^+, a^- + b^-, c^-) \leq \max(a^-+b^+, a^++b^-, c^+)
\enspace .
\]

Observe that \ref{p:trans-3} is a restricted transitivity property, requiring the intermediate term to belong to $\tmax^2_{\pm}$. In particular, the relations $\sleq$ and $\balance$ restricted to $\tmax^2_{\pm}$ are transitive. 
Hence, 
there is a canonical refinement of the balance relation,
that yields an equivalence relation $\mathcal{R}$ over $\tmax^2$:
\[(a^+,a^-) \mathcal{R} (b^+,b^-) \Leftrightarrow
\begin{cases}
a^+ \oplus b^- = a^- \oplus b^+& \;\text{if}\; (a^+, a^-), (b^+, b^-)\in \tmax^2_{\pm} ,\\
(a^+,a^-)=(b^+,b^-)& \text{otherwise.}
\end{cases}
\]  
One can check that $\mathcal{R}$ is compatible with the $\oplus$, $\odot$, and $\ominus$ operations, and with the relations $\sleq$, $\prec$, $\balance$, 
which are therefore defined on the quotient $\tmax^2 / \mathcal{R}$.

\begin{definition}\label{sym_def}
The \textit{symmetrized tropical semiring} is the quotient semiring $(\smax:=\tmax^2 / \mathcal{R},\oplus,\odot)$. 
We denote by $\zero:=\overline{(-\infty,-\infty)}$ the zero element 
and by $\unit:=\overline{(0, -\infty)}$ the unit element.
We set  $\smax^+=\{x\in \smax\mid x\succ \zero\}\cup\{\zero\}$,
$\smax^-=\{x\in \smax\mid x\prec \zero\}\cup\{\zero\}$,
$\smax^\circ =\{x\in \smax\mid x\balance \zero\}$,
$\smax^\vee = \smax^+ \cup \smax^-=\{\bar{x}\mid x\in \tmax^2_{\pm}\}$.
We say that the elements of $\smax^+,\smax^-$ and $\smax^\vee$ are {\em positive}, {\em negative}, and {\em signed}, respectively. 
\end{definition}
Note that here, we use the terms ``positive'' and ``negative'' in a weak sense, considering the zero element to be both positive and negative. Relating $\smax^+$ and 
$\smax^-$ with tropical pairs, it can be confirmed that $\smax^+=\tmax^2_+ / \mathcal{R}$ and $\smax^-=\tmax^2_-/\mathcal{R}$.
Beware, however, that $\{x\in \smax\mid x\succeq \zero\}=\smax^+ \cup \smax^\circ\neq \smax^+$. Observe that any element $x\in \smax^\vee$ can be written in a unique way as
$x=x^+ \ominus x^-$ with $x^+,x^-\in \smax^+$ and $x^+\odot x^-=\zero$. 
Moreover, the map $x\in \tmax\to \overline{(x,-\infty)}\in \smax^+$ 
is an isomorphism of semirings and so we identify $\tmax$ with $\smax^+$,
and $-\infty$ with $\zero$.


Note that $\preceq$ yields a total order on $\Smax^\vee$, and that $\Smax^\vee$ is unbounded
in this order. We shall complete $\Smax^\vee$ by top and bottom elements, $\top$ and $\bot$,
with the convention that $\ominus \top =\bot$, $\ominus \bot =\top$, and $\zero\odot \top=\zero\odot\bot=\zero$.


\Cref{table-1} describes when $a\nabla b$ and $a\preceq b$ in terms 
of $|a|$ and $|b|$. 

\begin{table}[ht]\small
\begin{tabular}{ccc}
\begin{tabular}{c|ccc}
$a\balance b$ & $b\in\Smax^+\setminus\{\zero\}$ & $\Smax^-\setminus\{\zero\}$ & $\Smax^{\circ}$\\ 
\hline 
$a\in\Smax^+\setminus\{\zero\}$ &  $|a|=|b|$ & never & $|a|\leq |b|$\\ 
$a\in\Smax^-\setminus\{\zero\}$ &   never & $|a|=|b|$ & $|a|\leq |b|$\\
$a\in\Smax^{\circ}$ & $|a|\geq |b|$ & $|a|\geq |b|$ & always\\
\end{tabular}
&\ \   &
\begin{tabular}{c|ccc}
$a \preceq b$ & $b\in\Smax^+\setminus\{\zero\}$ & $\Smax^-\setminus\{\zero\}$ & $\Smax^{\circ}$\\
\hline 
$a\in\Smax^+\setminus\{\zero\}$ &  $|a|\leq |b|$ & never & $|a|\leq |b|$\\
$a\in\Smax^-\setminus\{\zero\}$ &  always & $|a|\geq |b|$ & always\\
$a\in\Smax^{\circ}$ & always & $|a|\geq |b|$ & always\\
\end{tabular}
\end{tabular}\\[2mm]
\caption{Relations $a\nabla b$ (left) and $a\preceq b$ (right)}
\label{table-1}
\end{table}  
\begin{example} We have $1\succ 0\succ -1\succ \zero\succ \ominus -1\succ\ominus 0\succ \ominus 1.$
\end{example}



We now consider a field $\rfield$, equipped with a surjective nonarchimedean valuation
map $\val$, that is a map from $\rfield$ to $\R\cup\{-\infty\}$, satisfying
\begin{equation*}
\begin{aligned}
 \val(x) = -\infty &\iff x = 0 \, , \\
\forall x_{1}, x_{2} \in \rfield, \ \val(x_{1}x_{2}) &= \val(x_{1}) + \val(x_{2}) \, , \\
\forall x_{1}, x_{2} \in \rfield, \ \val(x_{1} + x_{2}) &\le \max(\val(x_{1}),\val(x_{2})) \, . \label[equation]{eq:valuation}
\end{aligned}
\end{equation*}
Note that we use the ``max-plus convention'', it is more frequent to call valuation the {\em opposite} of the map $\val$.

We also assume that $\rfield$ is equipped with a total order $\leq$,
compatible with the operations of the field,
so that $\rfield$ is an ordered field.
We require also the nonarchimedean valuation $\val$ to be \emph{convex}, meaning
that the following property is satisfied:
\begin{equation*} x_{1} , \; x_{2} \in \rfield\, ,\;\text{and}\; 
 0 \le x_{2} \le x_{1} \implies \val(x_{2})\leq  \val(x_{1})\, .
\end{equation*}
An example of ordered, and in fact real closed, field with a convex valuation
is provided by the field of real Hahn series
$\hahnseries{\R}{\vgroup}$, i.e., series with coefficients in $\R$, exponents in $\R$,
such that the opposite of the support of the series is
a well ordered set, see Engler and Prestel~\cite{engler_prestel}. Then, the valuation of a series
is its greatest exponent, and a series is positive if
the coefficient of its monomial with greatest exponent
is positive.
One may also consider the subfield $\R\{\{t^\R\}\}$ of generalized Puiseux series with real coefficients.
A non-zero element $f\in \R\{\{t^\R\}\}$ is a formal sum 
\[
f =\sum_{k\in \mathbb{N}} f_k t^{\lambda_k} \enspace,
\]
where the $f_k$ are real numbers, with $f_0\neq 0$, and $\lambda_k$ is a non-increasing sequence of real
numbers converging to $-\infty$. We have $\val(f)=\lambda_0$.
 A series $f$ is {\em positive} if
$f_0>0$, and this defines a total order on $\R\{\{t^\R\}\}$.
Markwig noted in~\cite{markwig2009field} that it is a convenient
choice of field to work out tropicalizations. 
It follows from~\cite{markwig2009field} that this field is real closed.
Another convenient choice of field is $\R\{\{t^\R\}\}_{\operatorname{cvg}}$, the
subfield of $\R\{\{t^\R\}\}$ consisting of generalized Puiseux series that
are absolutely convergent for $t>0$ large enough. This field
is also real closed, see Van den Dries and Speisseger~\cite{van_dries_power_series}. It has the advantage
that the nonarchimedean valuation also has an analytic
definition, as $\val(f)= \lim_{t\to\infty}\log (f(t))/\log(t)$.
In the sequel, for simplicity, we will make a specific choice of field,
taking $\mathbb{K}$ to be the field of formal generalized Puiseux
series $\R\{\{t^\R\}\}$. The reader can verify
that all the subsequent results carry over to any nonarchimedean real closed
field with a surjective valuation to $\R$.


Following Allamigeon, Gaubert and Skomra~\cite{AGS}, we 
 define the {\em signed valuation} of an element $x\in \rfield$
to be the element $\sval (x)\in \smax^\vee$ such that
\[ \sval (x):=\begin{cases}
\val(x) &x>0,\\
\ominus \val(x)& x<0,\\
\zero& x=0 \enspace ,
\end{cases}\]
where $\val(x)\in\tmax$ is identified as an element of $\smax^+$.
We observe that
\[
x,y\in \rfield, x\leq y \implies \sval (x) \preceq \sval(y) \enspace .
\]


As discussed in Akian, Gaubert and Rowen~\cite{AGRowen},  $\smax^\vee$   
can be identified with the 
tropical real hyperfield of Viro~\cite{viro2010hyperfields} (also called real tropical hyperfield in Jell, Scheiderer and Yu~\cite{Jell2020} or the signed tropical hyperfield in Gunn~\cite{gunn}) and $\smax$ is the hyperfield system of $\smax^{\vee}$. 
Also recall that the real tropical hyperfield may be defined
by modding out the field $\R\{\{t^\R\}\}$ by the multiplicative
group consisting of positive absolutely convergent Puiseux series
of valuation $0$. The canonical order on $\R\{\{t^\R\}\}$
defines an order relation on this hyperfield, and the relation $\preceq$
extends this order to the signed tropical numbers. However, an inconvenience of the hyperfield is that the addition is multivalued, so it is more convenient to work here with the semiring $\Smax$, understanding that all previous statements can be translated to the setting of the hyperfields.

\subsection{Classical matrix cones}
In this section, we recall the definition of several classical matrix cones.
An $n\times n$ matrix $X$ is said to be {\em nonnegative} if $X_{ij}\geq 0$
for all $i,j\in[n]$. We denote by $\nn$ the cone of nonnegative matrices.
A $n\times n$ matrix $X$ is said to be {\em completely positive}
of order $k$ if there are nonnegative vectors $y^1,\dots,y^n$ of size $k$
such that
\begin{align}
  X_{ij}=\langle y^i,  y^j\rangle , \qquad i,j\in[n] \enspace ,
  \label{e-cp}
\end{align}
where $\langle y^i,  y^j\rangle$ denotes the usual scalar product of vectors $y^i$ and $y^j$.
Equivalently,
$X=YY^T$ where $Y$ is a $n\times k$ nonnegative matrix.
We denote by $\cpk$ the set of matrices of this form,
and we denote by $\cp=\bigcup_{k\geq 1} \cpk$ the
set of {\em completely positive} matrices, which constitutes
a convex cone. The above
definitions make sense over any real closed field,
in particular over $\R$ or $\mathbb{K}$.
It will be convenient to make explicit the choice
of the field, by writing for instance $\cp(\K)$ or $\cp(\R)$.

We denote by $\psd$ the cone of positive semidefinite matrices of dimension $n\times n$. Following, 
e.g., Burgdorf, Laurent and Piovesan~\cite{Burg15}, we say that $X$ is {\em completely positive semidefinite}
of order $k$ if there exist matrices ${Y}_1,\ldots,{Y}_n\in\psdk$
such that
\begin{align} X_{ij}=\langle {Y}_i,{Y}_j\rangle,
  \qquad i,j\in [n]\enspace,
  \label{e-csdp}
\end{align}
where $\langle Y,Z\rangle:=\operatorname{tr}(YZ^T)$ denotes the Frobenius
scalar product. The set of matrices of this form is denoted by $\csdpk$, and by $\csdp$ we denote the union of these sets, which constitutes a convex cone. The representation~\eqref{e-cp} corresponds
to the special case of~\eqref{e-csdp} in which all the matrices $Y_i$ are diagonal. It follows that $\cpk\subset \csdpk$. 
The relation between the different classes of matrices considered
so far is summarized in the following table:
\begin{equation}
  \label{e:primal-incl}
  \begin{array}{ccccc}
    \csdpk&\subset& \csdp& \subset& \psd \cap \nn \\
      \cup      &       &        \cup \\
   \cpk         &    \subset   &       \cp
    \end{array} 
\end{equation}
Dually, denoting by $C^{\polar}:=\{x\mid \langle x, y\rangle\geq 0,\forall y\in C\}$
the polar of a set,
\begin{equation}
  \label{e:dual-incl}
    \begin{array}{ccccc}
      \csdpk^\polar & \supset&  \csdp^\polar& \supset & \psd + \nn \\
\cap      && \cap \\
      \cpk^\polar &\supset & \cocp
      \end{array}
\end{equation}

The inclusion $\csdpk\subset \csdp$ is known to be strict for $k=2^{O(\sqrt{n})}$ (Prakash et al.~\cite{Prakash2017}),
whereas the inclusion $\cpk\subset \cp$ is strict for $k=n^2/2 +O(n^{3/2})$ (Bomze, Schachinger and Ullrich~\cite{Bomze2015}),
see~\Cref{r:DJL} and~\Cref{rk-cpsd} for more information. It is known that
$\cp\neq \cpsd \neq\psd\cap \nn$ for $n=5$, see Fawzi et al.~\cite{Fawzi2015} for more information.
Similarly, the dual hierarchy involving the polars of these cones is strict.
Two of our main theorems imply that when taking the image by the valuation,
and for $k\geq \max(n,\lfloor{n^2/4}\rfloor)$, each of these hierarchies collapses. 

\section{Polars and valuations}
\label{s:polval}

\subsection{Tropical polars}
The polar of a set over the field of Puiseux series is defined in a standard way.

\begin{definition}
\label{d:polpui}
Let $\bm{A}$ be a subset of $\puiseux^n$. Then the {\em polar cone} of 
this set is  
\begin{equation*}
\bm{A}^{\ordpolar}=\{\bm{x}\in \puiseux^n \mid \langle \bm{x},\bm{a}\rangle \geq \bm{0} , \;\forall \bm{a}\in\bm{A}\}\enspace ,
\end{equation*}
where $\langle \bm{x},\bm{a}\rangle:=\bm{\sum}_{i=1}^n \bm{x}_i \bm{a}_i$ is the usual scalar product.
\end{definition}
In the tropical setting, we have a more delicate notion of polar introduced in \cite{GK-09}.
\begin{definition}[\cite{GK-09}]
The {\em two-sided polar} of a subset
$A\subset \Rmax^n$ is defined to be
\[
A^\polarcouple = \{(x^+,x^-) \in (\Rmax^n)^2 \mid \<x^+,a>\geq \<x^-,a>, \;\forall a\in A\},
\enspace .
\]
Dually, given a subset $B$ of $(\Rmax^n)^2$, one considers the
{\em one-sided polar} 
\[
B^\polarsingle = \{a \in \Rmax^n \mid \<x^+,a>\geq \<x^-,a>, \;\forall (x^+,x^-)\in B\}
\enspace .
\]
Here $\langle x,y\rangle=\bigoplus_{i=1}^n x_i\odot y_i$ is the usual tropical scalar product.
\end{definition}

We equip $\Rmax$ with the topology induced by the metric $d(x,y):=  |\exp(x)-\exp(y)|$. We equip $\Rmax^n$ and $(\Rmax^n)^2$ with the product topologies. We shall refer to these topologies as the {\em Euclidean} topologies.
It is known that a closed tropical convex set is the intersection
of the closed tropical half-spaces that contain it. It follows
that for all $A\subset \Rmax^n$, the ``bipolar'' $(A^\polarcouple)^\polarsingle$
is the closed convex hull of $A$, see Cohen et al.~\cite{cgqs04}. 
The characterization of the ``dual bipolar'', $(B^\polarsingle)^\polarcouple$,
requires the following notion.


\begin{definition}
\label{d:polar}
A subset of $C\in(\Rmax^n)^2$ is a {\em pre-congruence}
if the following properties hold:
\begin{enumerate}[label={\rm (\roman*)}]
\item\label{pii} If $(f^+,f^-)\in C$ then $(\lambda f^+,\lambda f^-)\in C$ for each $\lambda\in\Rmax$;
\item\label{piii} For each $(f^+,f^-)\in C$ and $(g^+,g^-)\in C$ we have $(f^+\oplus g^+, f^- \oplus g^-)\in C$;
\item\label{piv}  If $(f^+,f^-)\in C$, $(g^+,g^-)\in C$  and $f^-=g^+$, then $(f^+,g^-)\in C$. 
\end{enumerate}
We say that $C$ is a {\em monotone pre-congruence} if, in addition, the following
property holds
\begin{enumerate}[label={\rm (\roman*)}]
  \setcounter{enumi}{3}
\item\label{pi} $(f^+,f^-)\in C$ if $f^+\geq f^-$ (where $\geq$ is the standard entrywise order).
\end{enumerate}
\end{definition}
In fact, the terminology {\em polar cone} is used in Gaubert and Katz~\cite{GK-09}
for the monotone pre-congruence  notion, but since the term ``polar'' arises in different
guises here, we change for a more explicit one. We also found it convenient here to make a change of the sign convention, so that, here,
an element $(f,g)$ encodes an inequality $f\geq g$, rather than the reverse.


\begin{theorem}[{\cite[Th.~10]{GK-09}}]
  \label{t:polar:new}
  The two-sided polars of subsets of $\Rmax^n$ are precisely the closed
  monotone pre-congruences of $(\Rmax^n)^2$. In particular,
  if $C$ is a closed monotone pre-congruence, we have
  $C = (C^\polarsingle)^{\polarcouple}$.
\end{theorem}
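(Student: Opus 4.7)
The plan is to prove both inclusions in the characterization; the ``in particular'' identity $C=(C^\polarsingle)^\polarcouple$ follows by applying the main statement with $A=C^\polarsingle$. For the easy direction, fix $A\subseteq\Rmax^n$ and check that $A^\polarcouple$ is a closed monotone pre-congruence. Closedness follows since
\[
A^\polarcouple=\bigcap_{a\in A}\{(x^+,x^-)\in(\Rmax^n)^2:\langle x^+,a\rangle\geq\langle x^-,a\rangle\},
\]
each factor being closed by continuity of the tropical scalar product in the Euclidean topology. Conditions \ref{pii} and \ref{piii} are immediate from the tropical bilinearity of $\langle\cdot,\cdot\rangle$, and \ref{pi} follows from the monotonicity of $y\mapsto\langle y,a\rangle$ on $\Rmax^n$. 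Condition \ref{piv} is direct transitivity: from $\langle f^+,a\rangle\geq\langle f^-,a\rangle=\langle g^+,a\rangle\geq\langle g^-,a\rangle$ one gets $\langle f^+,a\rangle\geq\langle g^-,a\rangle$ for every $a$.

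For the converse, let $C$ be a closed monotone pre-congruence. The inclusion $C\subseteq(C^\polarsingle)^\polarcouple$ is automatic: any $(x^+,x^-)\in C$ satisfies $\langle x^+,a\rangle\geq\langle x^-,a\rangle$ for every $a\in C^\polarsingle$, by definition of the one-sided polar. The content is the reverse inclusion, which I would establish by contrapositive: given $(x^+,x^-)\notin C$, I construct $a\in C^\polarsingle$ with $\langle x^+,a\rangle<\langle x^-,a\rangle$. Conceptually this is a tropical separation theorem for closed monotone pre-congruences: regarding $C$ as the graph of a closed preorder $\succeq_C$ on $\Rmax^n$ that is $\oplus$-compatible, scalar-invariant (by \ref{pii}--\ref{piii}) and extends the coordinatewise order (by \ref{pi}), the task becomes separating the pair $(x^+,x^-)$ from this graph by a linear functional of the form $y\mapsto\langle y,a\rangle$.

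The main obstacle is the construction of the separator $a$. The approach I would follow is residuation-based. For each candidate $a\in\Rmax^n$, the condition $a\in C^\polarsingle$ is equivalent to the hyperplane-pre-congruence $H_a:=\{(y^+,y^-):\langle y^+,a\rangle\geq\langle y^-,a\rangle\}$ containing $C$. One then searches for the largest $a$ (in a suitable pointwise sense) such that $C\subseteq H_a$; its coordinates are defined by an explicit tropical residuation formula, namely $a_i$ is the largest scalar for which adding $a_i\odot e_i$ to a test vector preserves $\succeq_C$-compatibility against $C$. The hypothesis $(x^+,x^-)\notin C$ combined with closedness of $C$ in the Euclidean topology yields a uniform margin that guarantees the resulting $a$ detects the defect, that is, $\langle x^+,a\rangle<\langle x^-,a\rangle$. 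A cleaner but less constructive alternative is to approximate $C$ by finitely generated sub-pre-congruences, reduce separation to tropical polyhedral duality on each, and pass to the limit. The delicate point throughout is preserving strictness of the separating inequality, which is where both the monotonicity axiom \ref{pi} and closedness of $C$ are essential.
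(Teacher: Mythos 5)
Your first direction is fine: checking that $A^{\polarcouple}$ is closed and satisfies the four axioms of a monotone pre-congruence is exactly the routine part, and your verifications (closedness as an intersection of closed sets, bilinearity for the cone axioms, monotonicity of $y\mapsto\langle y,a\rangle$, and the equality $\langle f^-,a\rangle=\langle g^+,a\rangle$ for the transitivity axiom) are correct. Note, however, that the paper itself does not prove this theorem: it quotes it as \cite[Th.~10]{GK-09}, and the entire substance of that result is the converse direction, which your proposal only sketches.

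That converse is where you have a genuine gap. Everything hinges on producing, for each $(x^+,x^-)\notin C$, a vector $a\in C^{\polarsingle}$ with $\langle x^+,a\rangle<\langle x^-,a\rangle$, and your text never actually constructs it. The ``largest $a$ with $C\subseteq H_a$'' is not a well-defined object: the admissible $a$'s form precisely $C^{\polarsingle}$, which is a tropical cone closed under scaling and in general has no maximal element, and in any case no single $a$ can witness exclusion for all points outside $C$ --- the separator must depend on $(x^+,x^-)$. The announced ``explicit tropical residuation formula'' is left unspecified (what is the test vector, and why does the resulting $a$ lie in $C^{\polarsingle}$?), and the claim that closedness gives a ``uniform margin'' ensuring strict separation is asserted, not proved; strictness is exactly the delicate point in tropical separation and is normally obtained via the residuated projection of the point onto the closed cone, not via a margin argument. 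Moreover, even once one separates the point from $C$ viewed as a closed tropical cone in $(\Rmax^n)^2$, the separating half-space lives in $2n$ coordinates, and one still has to use the monotonicity and transitivity axioms to convert it into one of the special polar form $\langle y^+,a\rangle\geq\langle y^-,a\rangle$ with a single $a\in\Rmax^n$; this reduction, which is the heart of the Gaubert--Katz proof, is absent from your plan. The alternative route by finitely generated approximation and passage to the limit is likewise unjustified: strict inequalities are not preserved under limits of separators, and you do not explain why a closed pre-congruence is approximable in the required sense. As it stands, the proposal proves only the easy inclusion and the easy half of the characterization.
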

In what follows, we will define and characterize the signed polars of subsets of $\Rmax^n$.
\begin{definition}
\label{d:polsym}
Let $A$ be a subset of $\Rmax^n$. Then the {\em signed polar} of this set is
\begin{equation}\label{e-def-signedpolar}
A^{\signedpolar}=\{x\in(\Smaxv)^n\mid \langle x,a\rangle \sgeq \zero,\;\forall a\in A\} \enspace ,
\end{equation}
where $\langle x,a\rangle = \langle x^+,a\rangle \ominus \langle x^-,a\rangle$.
\end{definition}
We recall that every element $x\in  (\Smaxv)^n$ can be written
in a unique way as $x= x^+\ominus x^-$ where $x^+,x^-\in \Rmax^n$ 
and  $x^+_i\odot x^-_i=\zero$ for all $i\in [n]$ (remembering that $\Rmax$ is identified with $\Smax^+$).
Observe that, for all $x\in (\Smaxv)^n$ and $a\in (\Rmax)^n$, 
\begin{align}
  \langle x,a\rangle \sgeq\zero \Leftrightarrow \langle x^+,a\rangle\geq \langle x^-,a\rangle \enspace .
\label{fromsigned-to-polar}
\end{align}
We next establish an equivalence between signed polars and monotone pre-congruences, see \Cref{car-polar} and \Cref{coro-polar}.
\begin{definition}
A pair $(x^+,x^-)\in(\Rmax^n)^2$ is called {\em signed} if for each $i$ we have $x_i^+ \odot x^-_i=\zero$.
\end{definition}

\begin{definition}
  Given a subset $C\subset (\Rmax^n)^2$, we denote by $C^{\chech}\subset C$ the
  {\em signed part} of $C$ defined by
\[
(x^+,x^-)\in C^{\chech}\Leftrightarrow (x^+,x^-)\in C\ \text{and}\ (x^+,x^-)\ \text{is signed.}
\]  
\end{definition}

We will also use the following notation
for the {\em diagonal} of $(\Rmax^n)^2$:
\[
\Delta^n= \{(z,z)\mid z\in \Rmax^n\}.
\]

We first show that every closed monotone pre-congruence of $(\Rmax^n)^2$ is determined
by its signed part. To this end, we introduce the following notation.
For all $(f^+,f^-)\in C$, we define $f^\chech=(f^{\chech +},f^{\chech -})$ by
the following rule:
\begin{equation*}
f^{\chech +}_i=
\begin{cases}
f^+_i, & \text{if $f^+_i\geq f^-_i$},\\
\zero, & \text{if $f^+_i<f^-_i$},
\end{cases}\quad
f^{\chech -}_i=
\begin{cases}
f^-_i, & \text{if $f^-_i> f^+_i$},\\
\zero, & \text{if $f^-_i\leq f^+_i$.}
\end{cases}
\end{equation*}
We have the following property.
\begin{lemma}\label{lem-makechech}
Let $f\in (\Rmax^n)^2$ and $x\in \Rmax^n$. Then,
\[
\langle f^+,x\rangle \geq \langle f^-,x\rangle \Leftrightarrow
\langle f^{\chech +},x\rangle \geq \langle f^{\chech -},x\rangle \enspace.
\]
\end{lemma}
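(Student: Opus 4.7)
The plan is to exploit the coordinatewise structure of the $\chech$ rule. I would partition the index set as $I_+=\{i : f^+_i \geq f^-_i\}$ and $I_-=\{i : f^+_i < f^-_i\}$, so that by \cref{e:chechrule} one has $f^{\chech+}_i = f^+_i$ for $i\in I_+$ and $f^{\chech+}_i=\zero$ otherwise, and symmetrically for $f^{\chech-}$. In particular,
\[
\langle f^{\chech+},x\rangle = \bigoplus_{i\in I_+} f^+_i\odot x_i,\qquad
\langle f^{\chech-},x\rangle = \bigoplus_{i\in I_-} f^-_i\odot x_i\enspace.
\]

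The key observation I would then make is that in each coordinate, $\max(f^+_i\odot x_i,f^-_i\odot x_i) = \max(f^{\chech+}_i\odot x_i,f^{\chech-}_i\odot x_i)$, because the $\chech$ rule zeros out only the smaller of $f^+_i,f^-_i$. Taking the join over $i$ gives the global identity
\[
M \,:=\, \langle f^+,x\rangle \oplus \langle f^-,x\rangle \,=\, \langle f^{\chech+},x\rangle \oplus \langle f^{\chech-},x\rangle\enspace .
\]
Moreover, on $I_-$ one has $f^+_i\odot x_i \leq f^-_i\odot x_i \leq \langle f^{\chech-},x\rangle$, so $\langle f^+,x\rangle \leq M$; combined with $\langle f^{\chech+},x\rangle \leq \langle f^+,x\rangle$, this yields the equivalence
\[
\langle f^+,x\rangle \geq \langle f^-,x\rangle \iff \langle f^+,x\rangle = M,
\]
and analogously $\langle f^{\chech+},x\rangle \geq \langle f^{\chech-},x\rangle \iff \langle f^{\chech+},x\rangle = M$.

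It then remains to prove $\langle f^+,x\rangle = M \iff \langle f^{\chech+},x\rangle = M$. The backward direction is immediate since $\langle f^{\chech+},x\rangle \leq \langle f^+,x\rangle \leq M$. For the forward direction, assuming $M>\zero$ (the $M=\zero$ case is trivial), I would pick some $i^*$ with $f^+_{i^*}\odot x_{i^*}=M$: if $i^*\in I_+$, then $f^{\chech+}_{i^*}\odot x_{i^*}=M$ and we are done; if instead $i^*\in I_-$, then $x_{i^*}\neq\zero$ and $f^-_{i^*}\odot x_{i^*} > f^+_{i^*}\odot x_{i^*}=M$, contradicting $\langle f^-,x\rangle \leq M$.

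The main obstacle I anticipate is not conceptual but a bit of bookkeeping around the $\zero=-\infty$ case and strict-versus-nonstrict inequalities on the partition boundary; a clean handling of these boundary cases inside the argmax argument of the last paragraph should dispatch the lemma.
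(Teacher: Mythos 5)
Your proof is correct and takes essentially the same route as the paper: both rest on splitting the indices according to whether $f^+_i \geq f^-_i$ and observing that the rule~\eqref{e:chechrule} discards only dominated terms, so the comparison of the two sides is unchanged. The paper simply asserts the resulting equivalence (citing a general property of tropical half-spaces from the literature), whereas you verify it in full via the common maximum $M$ and an argmax/strictness argument --- a harmless elaboration of the same idea.
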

\begin{proof}
  This follows from a general property of tropical half-spaces, which can be found for instance
  in~\cite[Section~3]{SGKatz}. We reproduce the argument for the reader's convenience.
  The relation $\langle f^+,x\rangle \geq \langle f^-,x\rangle $ can be rewritten
  as $\oplus_i f^+_i  x_i \geq \oplus_i f^-_i x_i $. Let $J:=\{j \mid f^+_j\geq f^-_j\}$.
  Then, the previous inequality is equivalent to
  $\oplus_{j\in J} f^+_j x_j \geq \oplus_{i\not\in J} f^-_i x_i $ which is precisely
  $\langle f^{\chech +},x\rangle \geq \langle f^{\chech -},x\rangle$.
\end{proof}
\begin{proposition}\label{cor-canchech}
  Suppose that $C\subset (\Rmax^n)^2$ is a closed monotone precongruence. Then,
  \begin{enumerate}[label={\rm (\roman*)}]
    \item\label{p-fchech} $f\in C$ implies $f^\chech\in C$;
    \item $C^\polarsingle = (C^{\chech})^\polarsingle$;
    \item  $C=C^{\chech}\oplus \Delta^n$.
      \end{enumerate}
\end{proposition}
\begin{proof}
(i).  As $C$ is a closed monotone pre-congruence, by \Cref{t:polar:new}, we have $C=A^{\polarcouple}$
  for some subset $A\subset\Rmax^n$. So $f= (f^+,f^-)\in C$ if and only if $\langle f^+,x\rangle \geq \langle f^-,x\rangle$ for all $x\in A$. Then, by~\Cref{lem-makechech}, $f^\chech \in C$.

  (ii). Since $C^{\chech} \subset C$, $C^\polarsingle \subset (C^{\chech})^\polarsingle$.
  If $a\in (C^{\chech})^\polarsingle$, then, for all $(f^+,f^-)\in C$, by (i),
  we have $(f^{\chech +},f^{\chech -})\in C^{\chech}$, and then we deduce
  from~\Cref{lem-makechech} that
  $\langle f^+,a\rangle \geq \langle f^-,a\rangle$,
  and so $a\in C^\polarsingle$.

(iii).  We argue, as in the proof of~(i),
  that $C=A^{\polarcouple}$, so that $(f^+,f^-)\in C$ if and only if $\langle f^+,x\rangle \geq \langle f^-,x\rangle$ for all $x\in A$. 

If $(f^+,f^-)\in C^{\chech}\oplus\Delta^n$ then $(f^+,f^-)=(g^+,g^-)\oplus (c,c)$ for some $(g^+,g^-)\in C^{\chech}$ and $c\in\Rmax^n$
Obviously,  $\langle f^+,x\rangle \geq \langle f^-,x\rangle$ for all $x\in A$ since $(g^+,g^-)$ and $(c,c)$ also satisfy this property. 
Thus $C^{\chech}\oplus\Delta^n\subseteq C$.

Suppose now that $f\in C=A^{\polarcouple}$. Then it follows from~\Cref{lem-makechech}
that $f^\chech\in C$.  Moreover, 
$f=f^{\chech}\oplus (c,c)$ where $c_i=\min(f^+_i,f^-_i)$ for all $i$, showing
that $C\subset C^\chech \oplus \Delta^n$.
\end{proof}

Let us introduce the following notation and operations: 

\begin{definition}
For $z\in\Rmax^n$ and $I\subseteq [n]$ we define vectors $z_I$ and $z_{\sauf{I}}$:
\[
(z_I)_i=
\begin{cases}
z_i, & \text{for $i\in I$},\\
\zero 
\end{cases},\quad
(z_{\sauf{I}})_i=
\begin{cases}
z_i, & \text{for $i\notin I$},\\
\zero 
\end{cases}
\]
For $I=\{i\}$ we denote $z_{\sauf{i}}=z_{\sauf{\{i\}}}$.
\end{definition}

\begin{definition}
  Let $(x^+,x^-)\in(\Rmax^n)^2$ and $(y^+,y^-)\in(\Rmax^n)^2$ and suppose that
  $x^-_i=y^+_i$ for some $i\in [n]$. Then we define
\[
(x^+,x^-)\oplus_i (y^+,y^-)=(x^+\oplus y^+_{\sauf{i}},\; x^-_{\sauf{i}}\oplus y^-). 
\]
\end{definition}
\begin{definition}
  Let $(x^+,x^-)\in(\Rmax^n)^2$ and $(y^+,y^-)\in(\Rmax^n)^2$, and let $I=\{i\in[n]\mid x^-_i = y^+_i\}$.
Then we define
\begin{align*}
&(x^+,x^-)\newplus (y^+,y^-)=(x^+\oplus y^+_{\sauf{I}},\; x^-_{\sauf{I}}\oplus y^-) \enspace ,\\
\text{and}\quad  & (x^+,x^-)\newplusv (y^+,y^-)= \left((x^+,x^-)\newplus (y^+,y^-)\right)^\chech \enspace . 
\end{align*}
\end{definition}
In particular, if $I$ is empty, $(x^+,x^-)\newplus (y^+,y^-)
=(x^+,x^-){\oplus} (y^+,y^-)$.
\begin{remark}
If we identify $(\Rmax^n)^2$ to $(\Rmax^2)^n$ by  $(x^+,x^-)\mapsto ((x_i^+,x_i^-))_{i=1,\ldots , n}$,
we get that the operations $\newplus$ and $\newplusv$ can be defined componentwise: $x\newplus y= ((x_i\newplus y_i))_{i=1,\ldots, n}$ for all
$x=(x_i)_{i=1,\ldots, n}$, $y=(y_i)_{i=1,\ldots, n}$, with $x_i,y_i\in \Rmax^2$, for all $i=1,\ldots, n$. 
Also the operation $\newplusv$ can be defined equivalently as  
a binary operation on $\Smax^\vee$.
\end{remark}


\begin{definition}
\label{t:spolar}
A set $R\subset(\Rmax^n)^2$ is called a {\em signed elimination cone}
if each pair $(x,y)\in R$ is signed and the following properties hold:
\begin{enumerate}[label={\rm (\roman*)}]
\item \label{bendi}$(x^+,\zero)\in R$ for each $x^+\in\Rmax^n$;
\item \label{bendii}If $(x^+,x^-)\in R$ then $(\lambda x^+,\lambda x^-)\in R$ for each $\lambda\in\Rmax$;
\item \label{bendiii} For each $(x^+,x^-)\in R$ and $(y^+,y^-)\in R$ we have $((x^+,x^-)\oplus (y^+,y^-))^{\chech}\in R$;
\item\label{bendiv} For each $(x^+,x^-)\in R$ and $(y^+,y^-)\in R$  and $i$ such that $x^-_i=y^+_i$, 
we have $((x^+,x^-)\oplus_i (y^+,y^-))^{\chech}\in R$.
\end{enumerate}
\end{definition}
We will see below (\Cref{p:bendequiv}) that the last two properties can be replaced by the stability of $R$ under the $\newplusv$
operation. \Cref{bendiv} may be interpreted in terms of ``Fourier--Motzkin elimination'', as
it can be seen in \Cref{ex:FM1} and \Cref{ex:FM2} below.
The terminology ``signed elimination cone'' refers to such interpretation.
The special addition appearing in \Cref{bendiv} is also reminiscent of the bend relation (Giansiracusa and Giansiracusa~\cite{Giansiracusa2016}, Maclagan and Rinc\'{o}n~\cite{Maclagan2018}), which plays a somehow similar role
in the unsigned setting.

We shall identify $(\Rmax^n)^2$ to $(\Rmax^2)^n$. In this way
a subset of $(\Rmax^n)^2$ consisting only of signed pairs
can be identified to a subset of $(\tmax^2_{\pm})^n$
or equivalently to a subset of $(\Smaxv)^n$, and vice
versa. We shall denote by $\imath$ the canonical
identification map from $(\Smaxv)^n$ to the set of signed
pairs of $(\Rmax^n)^2$. In particular,
the {\em signed polar} $U = A^\circ$ of a subset $A\subset \Rmax^n$
which was defined as a subset of $(\Smaxv)^n$ gives
rises to subset $\imath(U) \subset (\Rmax^n)^2$
consisting of signed elements. We also recall that $(\Smaxv)^n$ is equipped with the product topology
defined by considering the order topology on $\Smaxv$.

\begin{theorem}\label{car-polar}
  Let $R\subset (\Rmax^n)^2$. The following assertions are equivalent:
  \begin{enumerate}[label={\rm (\roman*)}]
  \item $R$ is a closed signed elimination cone.
  \item There exists a closed monotone pre-congruence $C$ of $\Rmax^n$
    such that $R$ is the signed part of $C$.
   \end{enumerate}
  \end{theorem}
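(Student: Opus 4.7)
The plan is to prove the two implications separately, using the characterization of closed monotone pre-congruences as bipolars (\Cref{t:polar:new}) and the chech-closure property (\Cref{cor-canchech}).

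For (ii) $\Rightarrow$ (i), suppose $C$ is a closed monotone pre-congruence and set $R := C^\vee$. I would check each axiom of \Cref{t:spolar} directly: axiom~\ref{bendi} follows from the monotonicity of $C$ (so $(x^+,\zero)\in C$, and it is signed); axiom~\ref{bendii} from the scalar closure of $C$; axiom~\ref{bendiii} from the sum closure of $C$ followed by \Cref{cor-canchech}\,(i), which shows that chech preserves $C$. For axiom~\ref{bendiv}, I would use the representation $C=A^{\polarcouple}$ with $A=C^{\polarsingle}$ and run a short tropical Fourier--Motzkin elimination at the matching coordinate $i$: the monomials $x^-_i a_i$ and $y^+_i a_i$ coincide, and a case analysis on the size of $\langle x^+,a\rangle$ relative to $\langle y^-,a\rangle$ yields $\langle x^+\oplus y^+_{\sauf{i}}, a\rangle \geq \langle x^-_{\sauf{i}}\oplus y^-, a\rangle$ for every $a\in A$; apply \Cref{lem-makechech} to pass to the chech. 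Closedness of $R$ follows from $R=C\cap S$, where $S\subset(\Rmax^n)^2$ is the set of signed pairs, which is closed since a limit of signed pairs has, at each coordinate, at least one entry arising as a limit of $-\infty$'s.

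For (i) $\Rightarrow$ (ii), the plan is to set $C:=R\oplus\Delta^n$, verify that $C$ is a closed monotone pre-congruence, and conclude $C^\vee=R$. The last step is immediate: if $(x^+,x^-)\oplus(c,c)$ is signed, then at every coordinate either $\max(x^+_i,c_i)=\zero$ or $\max(x^-_i,c_i)=\zero$, which forces $c_i=\zero$; hence $c=\zero$ and the element lies in $R$. For the pre-congruence axioms of \Cref{d:polar}, scalar closure comes directly from signed bend cone axiom~\ref{bendii}; monotonicity from the decomposition $(f^+,f^-)=(f^+,\zero)\oplus(f^-,f^-)\in R\oplus\Delta^n$ using axiom~\ref{bendi}; sum closure from axiom~\ref{bendiii} combined with the identity $(u,v)=(u,v)^\vee\oplus(m,m)$ with $m_i=\min(u_i,v_i)$; and closedness from a compactness argument extracting convergent subsequences of the $R$-part and diagonal part separately.

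The main obstacle will be transitivity of $R\oplus\Delta^n$: given $(f^+,f^-),(g^+,g^-)\in R\oplus\Delta^n$ with $f^-=g^+$, one must deduce $(f^+,g^-)\in R\oplus\Delta^n$. Writing $(f^+,f^-)=(x^+,x^-)\oplus(c,c)$ and $(g^+,g^-)=(y^+,y^-)\oplus(d,d)$, the matching condition amounts to $\max(x^-_i,c_i)=\max(y^+_i,d_i)$ at each coordinate, and must be handled by a case analysis: indices where $x^-_i=y^+_i$ can be treated by the bend axiom~\ref{bendiv} applied simultaneously to all such indices (equivalently, via stability under $\newplusv$, as announced in \Cref{p:bendequiv}), while indices where the matching is induced via the diagonal parts must be dealt with by inserting axial elements $(\lambda e_i,\zero)\in R$ from axiom~\ref{bendi} and combining them with the core via sum and bend. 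Reducing the goal to $(f^+,g^-)^\vee\in R$ (using $(f^+,g^-)\in R\oplus\Delta^n\Leftrightarrow(f^+,g^-)^\vee\in R$ through the canonical decomposition with $e_i=\min(f^+_i,g^-_i)$), I expect the combinatorial bookkeeping for the second class of indices, matching up the axial contributions with the diagonal perturbations $c,d$, to be the most delicate step.
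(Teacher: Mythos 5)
Your direction (ii)\,$\Rightarrow$\,(i) is essentially complete and close to the paper's own argument (\Cref{prop-precongtobend}): axioms \ref{bendi}--\ref{bendiii} of \Cref{t:spolar} and the closedness of the set of signed pairs are handled in the same way, and for axiom \ref{bendiv} you substitute a tropical Fourier--Motzkin elimination over the representation $C=A^{\polarcouple}$ (available by \Cref{t:polar:new}) for the paper's derivation from monotonicity and transitivity of $C$; your elimination step is correct, and the paper itself mentions this interpretation. Your identification of the signed part of $R\oplus\Delta^n$ with $R$ and both closedness arguments are also fine.

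The genuine gap is in (i)\,$\Rightarrow$\,(ii), at exactly the point you flag yourself: the transitivity property (item \ref{piv} of \Cref{d:polar}) for $C=R\oplus\Delta^n$ is only sketched, and it is the heart of the theorem --- in the paper it is \Cref{p:transitivity}, by far the longest step, and it cannot be dismissed as routine bookkeeping. Writing $(f^+,f^-)=(x^+,x^-)\oplus(c,c)$ and $(g^+,g^-)=(y^+,y^-)\oplus(d,d)$ with $f^-=g^+$, the delicate coordinates are those where the equality $f^-_i=g^+_i$ is realized by the diagonal perturbations $c_i$ or $d_i$ rather than by $x^-_i=y^+_i$; your proposal to handle them by ``inserting axial elements $(\lambda e_i,\zero)$ and combining via sum and bend'' is exactly the part you leave undone, and it is not developed enough to see that it closes. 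The paper's resolution requires two preparatory lemmas proved by separate inductions --- \Cref{l:decreasey} (if $x\in R$, $y$ is signed and $y\succeq x$, then $y\in R$) and \Cref{l:cancel} (stability of $R$ under $\newplusv$) --- and then a careful decomposition: replace $(x^+,x^-)$ by $(x^+,f^-_I)$, rewrite $(g^+,g^-)$ as $(g^+_J,z)\oplus(d,d)$ with $(g^+_J,z)=((y^+,y^-)\oplus(d,\zero))^{\chech}\in R$, apply $\newplusv$, and exploit the identities obtained from $f^-=g^+$ along the index sets $\IsJ$, $\JsI$, $K_1$, $K_2$, with a further use of \Cref{l:decreasey} to get $(x^+,d_{K_2})\in R$, before reassembling $(f^+,g^-)$ as an element of $C$. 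Note also that invoking stability under $\newplusv$ via \Cref{p:bendequiv} does not shortcut this: that proposition itself rests on \Cref{l:cancel} and \Cref{l:decreasey}, so in a self-contained writeup those lemmas must be proved as well. Until this transitivity step is actually carried out, the implication (i)\,$\Rightarrow$\,(ii) is not established.
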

\begin{theorem}\label{coro-polar}
  Let $U\subset (\Smaxv)^n$. The following assertions are equivalent:
  \begin{enumerate}[label={\rm (\roman*)}]
  \item The image $\imath(U)\subset (\Rmax^n)^2$ by the canonical
    map is a signed elimination cone, and $U$ is closed in $(\Smaxv)^n$.
  \item There exists a subset $A\subset \Rmax^n$ such that $U= A^\circ$.
  \end{enumerate}
  \end{theorem}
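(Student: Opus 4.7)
The plan is to deduce~\Cref{coro-polar} from~\Cref{car-polar} together with~\Cref{t:polar:new}, by making explicit the dictionary between the signed polar $A^{\signedpolar}\subset(\Smaxv)^n$ and the two-sided polar $A^{\polarcouple}\subset(\Rmax^n)^2$.

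The key observation I would establish first is the identity $\imath(A^{\signedpolar}) = (A^{\polarcouple})^{\chech}$. This is an immediate consequence of the unique decomposition of any $x\in(\Smaxv)^n$ as $x=x^+\ominus x^-$ with $x^+,x^-\in(\Smax^+)^n$ and $x^+_i\odot x^-_i=\zero$ for all $i$, combined with the equivalence~\eqref{fromsigned-to-polar} which rewrites $\langle x,a\rangle \sgeq \zero$ as $\langle x^+,a\rangle \geq \langle x^-,a\rangle$. In parallel, I would observe that $\imath$ is a homeomorphism from $(\Smaxv)^n$ onto the set of signed pairs of $(\Rmax^n)^2$ (each endowed with the appropriate subspace topology), so that closedness of $U$ in $(\Smaxv)^n$ matches the closedness hypothesis used in~\Cref{car-polar}.

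The implication $(ii)\Rightarrow(i)$ then follows quickly: if $U=A^{\signedpolar}$, then $A^{\polarcouple}$ is a closed monotone pre-congruence (a fact built into~\Cref{t:polar:new}), so by~\Cref{car-polar} its signed part $(A^{\polarcouple})^{\chech}=\imath(U)$ is a closed signed bend cone. Conversely, for $(i)\Rightarrow(ii)$, if $\imath(U)$ is a closed signed bend cone, by~\Cref{car-polar} there exists a closed monotone pre-congruence $C\subset(\Rmax^n)^2$ with $\imath(U)=C^{\chech}$. Applying~\Cref{t:polar:new} and setting $A:=C^{\polarsingle}$ yields $C=A^{\polarcouple}$, and hence $\imath(U)=(A^{\polarcouple})^{\chech}=\imath(A^{\signedpolar})$; injectivity of $\imath$ gives $U=A^{\signedpolar}$.

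The main obstacle I foresee is the topological bookkeeping: verifying carefully that the closedness condition on $U\subset(\Smaxv)^n$ corresponds exactly to the ``closed signed bend cone'' notion employed in~\Cref{car-polar}. Once this identification is in place, the substance of the theorem has already been absorbed into~\Cref{car-polar} and~\Cref{t:polar:new}, and the remainder is a straightforward translation through the identity $\imath(A^{\signedpolar})=(A^{\polarcouple})^{\chech}$.
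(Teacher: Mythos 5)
Your proposal is correct and follows essentially the same route as the paper: it combines \Cref{car-polar} and \Cref{t:polar:new} with the identification $\imath(A^{\signedpolar})=(A^{\polarcouple})^{\chech}$ coming from~\eqref{fromsigned-to-polar}, and the topological point you flag (that $\imath$ is a homeomorphism onto the set of signed pairs, so closedness in $(\Smaxv)^n$ matches Euclidean closedness in $(\Rmax^n)^2$) is exactly the content of \Cref{lem-topcoincide}, which the paper proves by reducing to $n=1$ and noting both topologies realize the gluing of two half-lines at $-\infty$.
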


The proof of~\Cref{car-polar} and of~\Cref{coro-polar}
relies on a series of auxiliary results.

\begin{proposition}\label{prop-precongtobend}
Let $C$ be a closed monotone pre-congruence. Then $C^{\chech}$ is a signed elimination cone.
\end{proposition}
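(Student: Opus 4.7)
The plan is to verify each of the four axioms of \Cref{t:spolar} (plus the requirement that each pair be signed and that the set be closed) for $C^\chech$. Being signed is automatic from the definition of $C^\chech$, and closedness follows because $C^\chech = C \cap \{(x^+,x^-) \mid x^+_i\odot x^-_i=\zero\ \forall i\}$ is the intersection of two closed subsets of $(\Rmax^n)^2$.

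Axioms \ref{bendi}, \ref{bendii}, and \ref{bendiii} are direct consequences of the pre-congruence axioms combined with \Cref{cor-canchech}. For \ref{bendi}, monotonicity of $C$ (axiom \ref{pi}) gives $(x^+, \zero)\in C$, and this pair is signed. For \ref{bendii}, the scalar stability axiom \ref{pii} keeps $(\lambda x^+, \lambda x^-)$ in $C$, and scalar multiplication trivially preserves signedness. For \ref{bendiii}, axiom \ref{piii} yields $(x^+,x^-)\oplus(y^+,y^-)\in C$, and then \Cref{cor-canchech}(i) shows that taking $\chech$ remains in $C$; the result is signed by construction.

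The main obstacle is axiom \ref{bendiv}, because the partial sum $\oplus_i$ differs from $\oplus$ precisely at coordinate $i$, replacing $(x^+_i\oplus y^+_i,\, x^-_i \oplus y^-_i)$ by $(x^+_i,\, y^-_i)$. The plan is to derive it from the ``cancellation'' axiom \ref{piv} of pre-congruence by introducing two auxiliary elements sharing a common middle vector. Set
\[
w := x^- \oplus y^+_{\sauf{i}} \enspace ,
\]
and observe that since $x^-_i = y^+_i$ we also have $w = y^+ \oplus x^-_{\sauf{i}}$. By monotonicity the diagonal pairs $(y^+_{\sauf{i}}, y^+_{\sauf{i}})$ and $(x^-_{\sauf{i}}, x^-_{\sauf{i}})$ belong to $C$, so axiom \ref{piii} gives
\[
f := (x^+, x^-) \oplus (y^+_{\sauf{i}}, y^+_{\sauf{i}}) = (x^+ \oplus y^+_{\sauf{i}},\, w) \in C \enspace ,
\]
\[
g := (y^+, y^-) \oplus (x^-_{\sauf{i}}, x^-_{\sauf{i}}) = (w,\, x^-_{\sauf{i}} \oplus y^-) \in C \enspace .
\]
Since $f^- = g^+ = w$, axiom \ref{piv} of pre-congruence produces $(f^+, g^-) = (x^+\oplus y^+_{\sauf{i}},\, x^-_{\sauf{i}}\oplus y^-) = (x^+,x^-)\oplus_i (y^+,y^-) \in C$.

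Finally, applying \Cref{cor-canchech}(i) to this element shows that its $\chech$ remains in $C$, and it is signed by the definition of $(\cdot)^\chech$. Hence $((x^+,x^-)\oplus_i (y^+,y^-))^\chech \in C^\chech$, establishing \ref{bendiv} and completing the proof.
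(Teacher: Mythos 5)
Your proof is correct, and for properties \ref{bendi}--\ref{bendiii} it coincides with the paper's argument (the paper invokes \Cref{lem-makechech} via the representation $C=A^{\polarcouple}$ from \Cref{t:polar:new}, which is exactly the content of \Cref{cor-canchech}(i) that you cite; this is the one place where closedness of $C$ enters). Where you genuinely differ is in the only nontrivial step, property \ref{bendiv}. The paper proceeds by a chain of monotonicity and transitivity steps: it first derives $(x^+,x^-_{\sauf{i}})\in C$ and $(x^+,t e_i)\in C$ from $(x^+,x^-)$ using \ref{pi} and \ref{piv}, then adds the diagonal pair $(y^+_{\sauf{i}},y^+_{\sauf{i}})$ to make the second component equal to $y^+$, applies \ref{piv} once more against $(y^+,y^-)$, and finally adds back $(x^+,x^-_{\sauf{i}})$. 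You instead pad the two elements symmetrically by the diagonal pairs $(y^+_{\sauf{i}},y^+_{\sauf{i}})$ and $(x^-_{\sauf{i}},x^-_{\sauf{i}})$ so that they share the common middle vector $w=x^-\oplus y^+_{\sauf{i}}=y^+\oplus x^-_{\sauf{i}}$, and conclude with a single application of the cancellation axiom \ref{piv}; the identification of $(f^+,g^-)$ with $(x^+,x^-)\oplus_i(y^+,y^-)$ is exact. Both arguments use the same ingredients (monotonicity, additivity, cancellation, and chech-stability of closed pre-congruences), but your arrangement is shorter and makes the role of $\oplus_i$ as a single ``Fourier--Motzkin''-type cancellation at coordinate $i$ more transparent. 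One small remark: the definition of a signed bend cone (\Cref{t:spolar}) does not include closedness, so your opening observation that $C^{\chech}$ is closed, while true and used later in \Cref{car-polar}, is not needed for this proposition.
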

\begin{proof}
We need to check that $C^{\chech}$ satisfies the properties \ref{bendi}-\ref{bendiv} of \Cref{t:spolar}.

\ref{bendi}: if $C$ is a monotone pre-congruence, then, by ~\Cref{d:polar}, (iv), for all $x^+\in \Rmax^n$, $(x^+,\zero)\in C$, and since $(x^+,\zero)$ is signed, $(x^+,\zero)\in C^{\chech}$.

\ref{bendii}: obvious from \Cref{d:polar} part~\ref{pii}.

\ref{bendiii}: suppose that $(x^+,x^-)\in C^\chech$ and $(y^+,y^-)\in C^\chech$.
Then, by~\Cref{d:polar}, \ref{piii}, we have $(x^+,x^-)\oplus (y^+,y^-)\in C$.
Then, by~\Cref{cor-canchech} part \ref{p-fchech}, $((x^+,x^-)\oplus (y^+,y^-))^\chech \in C^\chech$.


\ref{bendiv}: Let $(x^+,x^-)\in C^{\chech}$ and $(y^+,y^-)\in C^{\chech}$  with $x^-_i=y^+_i=t$. Then we have
\[
x^-=x^-_{\sauf{i}}\oplus te_i,\quad y^+=y^+_{\sauf{i}}\oplus te_i.
\] 
Our purpose is to show that $(x^+\oplus y^+_{\sauf{i}},x^-_{\sauf{i}}\oplus y^-)^{\chech}\in C^{\chech}$.
We have $(x^+,x^-)\in C$ and by monotonicity of $C$, $(x^-,x^-_{\sauf{i}})\in C$, hence,
by transitivity of $C$, $(x^+,x^-_{\sauf{i}})\in C$. Similarly,
$(x^-,te_i)\in C$ by monotonicity, and so, $(x^+,t e_i)$ holds by transitivity.

As $(y^+_{\sauf{i}},y^+_{\sauf{i}})\in C$, we can add it to
$(x^+,te_i)$ and then we obtain $(x^+\oplus y^+_{\sauf{i}}, y^+)\in C$.
Using transitivity again, we obtain
$(x^+\oplus y^+_{\sauf{i}}, y^-)\in C$. 

As $(x^+,x^-_{\sauf{i}})\in C$, we can add it to $(x^+\oplus y^+_{\sauf{i}},y^-)$ and obtain 
$(x^+\oplus y^+_{\sauf{i}}, y^-\oplus x^-_{\sauf{i}}) \in C$. Using again~\Cref{cor-canchech} part \ref{p-fchech},
we deduce that $(x^+\oplus y^+_{\sauf{i}}, y^-\oplus x^-_{\sauf{i}})^\chech \in C^\chech$. 
\end{proof}

For all $z\in \Rmax^n$, we denote by $\supp(z)=\{i\in [n]\mid z_i\neq \zero\}$
the {\em support} of $z$. In what follows we will also consistently drop the $\odot$ sign.

\begin{lemma}
\label{l:decreasey}
Let $R$ be a signed elimination cone and take any $x=(x^+,x^-)\in R$ and suppose
that $y=(y^+,y^-)$ is signed and that $y\succeq x$.
Then $y\in R$.
\end{lemma}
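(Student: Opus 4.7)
The plan is to induct on $k := |\{j : y_j \neq x_j\}|$, the number of coordinates where $y$ and $x$ differ. If $k=0$ then $y=x\in R$, so suppose $k\geq 1$, pick any $i$ with $y_i \neq x_i$, and let $y'$ be the signed pair that agrees with $y$ off $i$ and with $x$ at $i$. Then $y' \succeq x$ coordinatewise (trivially at $i$, by hypothesis elsewhere), and $y'$ differs from $x$ at $k-1$ coordinates, so by the induction hypothesis $y' \in R$. It therefore suffices to prove the following one-coordinate step: given $y' \in R$ signed, $y$ signed with $y_j = y'_j$ for $j\neq i$ and $y_i \succeq y'_i$, show that $y \in R$.

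By \Cref{table-1}, this one-coordinate upgrade (with both $y'_i$ and $y_i$ signed) falls into exactly three cases: (A) $y'_i, y_i \in \Smax^+$ with $|y'_i| \leq |y_i|$, (B) $y'_i \in \Smax^-$, $y_i \in \Smax^+$, or (C) $y'_i, y_i \in \Smax^-$ with $|y'_i| \geq |y_i|$. Case (A) is immediate: by \ref{bendi} the pair $(y_i^+ e_i, \zero)$ lies in $R$, and a coordinatewise check shows $(y' \oplus (y_i^+ e_i, \zero))^\chech = y$, which belongs to $R$ by \ref{bendiii}. Case (B) reduces to case (A) in two steps: apply \ref{bendiv} to $y'$ and $(y'^-_i e_i, \zero)\in R$ at coordinate $i$ (the matching condition $y'^-_i = y'^-_i$ is trivial) to obtain an element $y'' \in R$ that equals $y'$ off $i$ and $\zero$ at $i$; then apply case (A) to upgrade $y''$ to $y$.

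The main obstacle is case (C), where the magnitude of a negative coordinate must be strictly decreased without disturbing any other coordinate; no direct combination of $y'$ with signed witnesses built from \ref{bendi} alone accomplishes this, and the scaling property \ref{bendii} must be used essentially. The construction is as follows: let $y'' \in R$ be obtained from $y'$ by zeroing coordinate $i$ exactly as in case (B), let $\lambda := y_i^- - y'^-_i < \unit$, and form $\lambda y' \in R$ using \ref{bendii}. By \ref{bendiii}, $(y'' \oplus \lambda y')^\chech \in R$, and one verifies coordinatewise that this equals $y$: at coordinate $i$, $y''_i = \zero$ and $(\lambda y')_i = (\zero, \lambda + y'^-_i) = (\zero, y_i^-) = y_i$, so $\oplus$ returns $y_i$; at each $j \neq i$, $y''_j = y'_j = y_j$ is signed and $(\lambda y')_j$ has the same sign but strictly smaller magnitude (since $\lambda < \unit$), so $\oplus$ returns $y'_j = y_j$. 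The resulting vector is already signed, so $\chech$ acts trivially and $y \in R$, closing the induction.
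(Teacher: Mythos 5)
Your proof is correct and uses exactly the same ingredients as the paper's: property \ref{bendi} together with \ref{bendiii} to raise positive entries, the cancellation $\oplus_i$ of \ref{bendiv} against an element of the form $(c\, e_i,\zero)$ to annihilate a negative entry, and scaling by some $\lambda\leq\unit$ via \ref{bendii} followed by absorption under $\oplus$ to shrink the modulus of a negative entry. The only difference is organizational: you run a coordinatewise induction with a three-case analysis, while the paper performs the same moves globally (first lowering the negative part from $x^-$ to $y^-$, then adding $(y^+,\zero)$), so this is essentially the paper's argument.
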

We note that,
\begin{align}
\text{for }x \text{ and }y\text{ signed, }
y\succeq x \iff (y^+\geq x^+ \text{ and } x^- \geq  y^-) \enspace .\label{e-reformulate}
\end{align}
\begin{proof}
  We first show that $(x^+,x^-_ie_i)\in R$ for all $i\in \supp(x^-)$.
  More generally, we will show that
  $(x^+,x^-_{\sauf{I}})\in R$ for all subsets $I\subset \supp(x^-)$.
 For this, it is enough to show 
that if $(x^+,x^-_{\sauf{I}})\in R$ then  $(x^+,x^-_{\sauf{I\cup\{j\}}})\in R$ for any $j\notin I$. 
But this follows since $(x^-_je_j,\zero)\in R$ and since
\[
(x^+,x^-_{\sauf{I\cup\{j\}}})=(x^+,x^-_{\sauf{I}})\oplus_j (x^-_je_j,\zero) \in R 
\]
In this way, by induction (successively increasing $I$ to $[n]\backslash\{i\}$), we conclude that
that $(x^+,x^-_ie_i)\in R$ for all $i\in\supp(x^-)$.

We also observe that $(x^+,y^-_i e_i)\in R$, 
as for the case when $y^-_i\neq -\inf$ we have that $x_i^-\geq y_i^-$ and $(x^+,y^-_i e_i)=(x^-_i)^{-1}  y^-_i  (x^+,x^-_ie_i)\oplus (x^+,\zero)$. 

It follows that $(x^+,y^-_ie_i)\in R$ for all $i\in \supp(y^-)$, and then $(x^+,y^-)\in R$ as 
$(x^+,y^-)=\bigoplus_{i=1}^n (x^+,y^-_ie_i)$.

Finally, $(y^+,y^-)=(x^+,y^-) \oplus (y^+,\zero)\in R$.  
\end{proof}

\begin{lemma}
\label{l:cancel}
Let $R$ be a signed elimination cone. Then, $x,y\in R$ implies $(x\newplus y)^{\chech}\in R$.
\end{lemma}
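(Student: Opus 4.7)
The plan is to proceed by induction on the size of $I^\star := \{i \in I : x^-_i = y^+_i \neq \zero\}$, the set of non-trivial matching indices, where $I := \{i \in [n] : x^-_i = y^+_i\}$ and $t_i := x^-_i = y^+_i$ for $i \in I$. For the base case $|I^\star| = 0$, every match has $t_i = \zero$, so the cancellations performed by $\newplus$ are vacuous, hence $x \newplus y = x \oplus y$ and the conclusion follows immediately from axiom~\ref{bendiii} of Definition~\ref{t:spolar}.

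For the inductive step with $|I^\star| \ge 1$, pick $j \in I^\star$. The plan is to construct two elements in $R$ whose pair has strictly fewer non-trivial matches. First, by axiom~\ref{bendi}, $(t_j e_j, \zero) \in R$, and axiom~\ref{bendiv} applied to $x$ and $(t_j e_j, \zero)$ at index $j$ (the matching condition $x^-_j = t_j = (t_j e_j)_j^+$ being satisfied) gives $x \oplus_j (t_j e_j, \zero) = (x^+, x^-_{\sauf\{j\}})$, which is already signed; hence $x' := (x^+, x^-_{\sauf\{j\}}) \in R$. Second, axiom~\ref{bendiv} applied to $x$ and $y$ directly at $j$ yields $z := (x \oplus_j y)^\chech \in R$.

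We then apply the induction hypothesis to the pair $(x', z)$. By construction $(x')^-_j = \zero$, so the match at $j$ in the new pair is trivial. A case analysis of the $\chech$ of $(x \oplus_j y)$ at each index $k \in I \setminus \{j\}$ shows that the matches among these can only persist at $t_k$ or be lost after $\chech$, never be created anew; in particular the number of non-trivial matches strictly decreases. The induction hypothesis gives $(x' \newplus z)^\chech \in R$, and a direct computation verifies the key identity $(x' \newplus z)^\chech = (x \newplus y)^\chech$, completing the induction.

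The main obstacle is the case analysis in the inductive step. At each $k \in I \setminus \{j\}$, the value $z_k$ depends on whether the $\chech$ of the pair $(x^+_k \oplus t_k,\, t_k \oplus y^-_k)$ selects the positive or negative component, which in turn depends on the relative order of $x^+_k$, $t_k$, and $y^-_k$. Distinguishing the sub-cases $x^+_k \ge y^-_k$, $t_k \ge y^-_k > x^+_k$, and $\max(x^+_k,t_k) < y^-_k$, one must verify both that the resulting match between $x'$ and $z$ at $k$ (if any) carries a $t'_k \le t_k$ which never upgrades a trivial match to a non-trivial one, and that after taking the final $\chech$ the identity $(x' \newplus z)^\chech = (x \newplus y)^\chech$ holds componentwise. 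This bookkeeping is the delicate technical heart of the proof.
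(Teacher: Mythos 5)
Your proof is correct, and it relies on the same basic mechanism as the paper's: eliminating the matched indices one at a time via axiom \ref{bendiv}, combined with the ability to delete entries of the negative part (your $x'=(x^+,x^-_{\sauf{j}})$, obtained from axioms \ref{bendi} and \ref{bendiv}, is exactly the first step in the paper's proof of \Cref{l:decreasey}, which you could simply invoke). The organization differs, however. The paper decomposes $x^-=u\oplus z$, $y^+=v\oplus z$ with $z$ the common part and runs an induction in which the target expression $(x^+\oplus v\oplus z_{\sauf{I}},u\oplus y^-)^{\chech}$ is kept explicit; each step is a single application of $\oplus_j$ with the auxiliary element $(x^+,u\oplus z_{\sauf{I}})\in R$ supplied by \Cref{l:decreasey}, so no invariance statement is ever needed. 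You instead recurse on the pair itself, measured by the number of nontrivial matches, which obliges you to verify two extra facts: that passing to $(x',z)$ creates no new nontrivial matches, and the identity $(x'\newplus z)^{\chech}=(x\newplus y)^{\chech}$. Both facts do hold, and the bookkeeping is lighter than you suggest: at a matched index $k\in I^{\star}\setminus\{j\}$, signedness of $x$ and $y$ forces $x^+_k=\zero$ and $y^-_k=\zero$, so the $k$-th component of $x\oplus_j y$ is $(t_k,t_k)$ and $z_k=(t_k,\zero)$; hence the match persists exactly at $t_k$ (it is never lost), your three sub-cases can only occur at unmatched indices, and there signedness again reduces the check of the identity to a short computation. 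So your route is sound; its price is the invariance identity, which the paper's fixed-expression induction avoids, while your version has the mild advantage of quoting the lemma itself as induction hypothesis rather than tracking an ad hoc intermediate family.
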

\begin{proof}
  We introduce vectors $u,v,z$ such that $u\oplus z=x^-$, $v\oplus z = y^+$
  and supports of $v$ and $z$, as well as the supports of $u$ and $z$, are pairwise disjoint. In this new notation
 $(x\newplus y)^{\chech}=(x^+\oplus v, u\oplus y^-)^{\chech}$. 

We will prove by induction that $(x^+\oplus v\oplus z_{\sauf{I}},u\oplus y^-)^{\chech}\in R$ for all $I\subseteq [n]$. 
The basis of induction is $(x^+\oplus v\oplus z,u\oplus y^-)^{\chech}\in R$, which is equal to  
$((x^+,u)\oplus (v\oplus z,y^-))^{\chech}$. Here $(x^+,u)\in R$ by \Cref{l:decreasey}. 

We need to show that  if $(x^+\oplus v\oplus z_{\sauf{I}},u\oplus y^-)^{\chech}\in R$ then 
$(x^+\oplus v\oplus z_{\sauf{I\cup\{j\}}},u\oplus y^-)^{\chech}\in R$  for any $j\notin I$. 
For this, first note that $(x^+,u\oplus z_{\sauf{I}})\in R$ by Lemma~\ref{l:decreasey}, and that 
all components of $z_{\sauf{I}}$ are present in $(x^+\oplus v\oplus z_{\sauf{I}},u\oplus y^-)^{\chech}$ since 
the supports of $z$ and $u$ are disjoint by the definition of these vectors and the supports of $z$ and $y^-$ are disjoint 
since $(v\oplus z,y^-)\in R$. 
Then we observe that
\begin{equation*}
\begin{split}
& \left((x^+,u\oplus z_{\sauf{I}})\oplus_j (x^+\oplus v\oplus z_{\sauf{I}},u\oplus y^-)^{\chech}\right)^{\chech}\\
&= (x^+\oplus v\oplus z_{\sauf{I\cup\{j\}}},u\oplus y^-\oplus  z_{\sauf{I\cup\{j\}}})^{\chech}=\\
&= (x^+\oplus v\oplus z_{\sauf{I\cup\{j\}}},u\oplus y^-)^{\chech}.
\end{split}
\end{equation*}
Therefore  $(x^+\oplus v\oplus z_{\sauf{I\cup\{j\}}},u\oplus y^-)^{\chech}\in R$, as claimed.

Setting $I=[n]$ we obtain $(x^+\oplus v, u\oplus y^-)^{\chech}\in R$. As  $(x^+\oplus v, u\oplus y^-)^{\chech}=(x\newplus y)^{\chech}$, the proof is complete.
\end{proof}

\begin{proposition}
\label{p:bendequiv}
  Suppose that $R\subset (\Rmax^n)^2$ consists of signed vectors and satisfies 
\ref{bendi} and \ref{bendii}
  of~\Cref{t:spolar}. Then the conditions \ref{bendiii} and \ref{bendiv} of this definition hold if and only if
  $R$ is stable under the $\newplusv$ operation.
\end{proposition}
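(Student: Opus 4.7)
The forward implication, that (iii) and (iv) together with (i), (ii) yield $\newplusv$-stability, is precisely \Cref{l:cancel}. So the remaining task is the converse: assuming that $R$ satisfies (i), (ii) and is stable under $\newplusv$, derive (iii) and (iv).

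Since the operations $\oplus$, $\oplus_i$, $\newplus$ and $\chech$ all act coordinatewise, the approach is to compare elements coordinate by coordinate. Fix $x, y \in R$ and set $I^* := \{i \in [n] : x^-_i = y^+_i \neq \zero\}$. Signedness of $x$ and $y$ forces, at each $i \in I^*$, $x^+_i = y^-_i = \zero$ and $x^-_i = y^+_i = t_i$ for some $t_i > \zero$. A direct componentwise unpacking shows that $x \newplusv y$ and $(x \oplus y)^\chech$ agree at every coordinate outside $I^*$; at $i \in I^*$, the former equals $(\zero, \zero)$ while the latter equals $(t_i, \zero)$. For any $i$ with $x^-_i = y^+_i$: either $x^-_i = \zero$, in which case $x \oplus_i y = x \oplus y$ and condition (iv) at $i$ reduces to (iii); or $i \in I^*$, in which case $(x \oplus_i y)^\chech$ differs from $x \newplusv y$ only at the positions $j \in I^* \setminus \{i\}$, where the former equals $(t_j, \zero)$ and the latter $(\zero, \zero)$.

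To recover the missing $(t_j, \zero)$ entries from $x \newplusv y$, I will use the auxiliary elements $w_j := (t_j e_j, \zero) \in R$ given by (i). Starting from $z_0 := x \newplusv y$, define inductively $z_k := z_{k-1} \newplusv w_{j_k}$ as $j_1, \ldots, j_m$ enumerate the coordinates to be recovered; each $z_k$ lies in $R$ by $\newplusv$-stability. The key point is that $z_{k-1}^-_{j_k} = \zero \neq t_{j_k} = w_{j_k}^+_{j_k}$, so $j_k$ does not belong to the cancellation index set of $z_{k-1}$ and $w_{j_k}$; hence the entry $t_{j_k}$ is genuinely added at coordinate $j_k$. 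At every other coordinate $k' \neq j_k$, $w_{j_k}$ has $\zero$ in both components, so $\newplus$ either leaves $z_{k-1}$ unchanged (if $z_{k-1}^-_{k'} > \zero$) or performs a trivial cancellation with $\zero$ (if $z_{k-1}^-_{k'} = \zero$); either way the coordinate is preserved. After processing all of $I^*$ for (iii), or $I^* \setminus \{i\}$ for (iv), one reaches $(x \oplus y)^\chech$, respectively $(x \oplus_i y)^\chech$, as an element of $R$.

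The main obstacle is the coordinate-level bookkeeping in the recovery step: one must verify carefully that the cancellation index set between $z_{k-1}$ and $w_{j_k}$ avoids $j_k$ while acting trivially on all other coordinates. Once the componentwise formula for $\newplusv$ is spelled out and combined with the signed structure of each $z_k$, this verification is routine, but it is the single place where the interaction between the signed structure and the cancellation rule of $\newplus$ has to be cross-checked with care.
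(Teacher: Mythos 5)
Your proposal is correct. The forward implication is handled exactly as in the paper, by citing \Cref{l:cancel}. For the converse, however, you take a genuinely different route: the paper simply observes that $(x\newplus y)^\chech \preceq (x \oplus_i y)^\chech \preceq (x \oplus y)^\chech$ (the negative parts coincide and the positive parts only grow, cf.~\eqref{e-reformulate}) and then invokes the upward-closedness property of \Cref{l:decreasey} to conclude that the two larger signed pairs lie in $R$; you instead rebuild $(x\oplus y)^\chech$ and $(x\oplus_i y)^\chech$ explicitly from $x\newplusv y$ by successively $\newplusv$-adding the singleton elements $(t_j e_j,\zero)$ supplied by axiom~\ref{bendi}, checking coordinatewise that each such addition restores exactly one cancelled entry $(t_j,\zero)$ and leaves all other coordinates untouched. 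The paper's argument is shorter, but \Cref{l:decreasey} is stated for signed bend cones, i.e.\ under all four axioms, so in the converse direction one has to observe additionally that its proof only uses operations that are instances of $\newplusv$ under hypotheses \ref{bendi}--\ref{bendii}; your reconstruction sidesteps this point entirely and is self-contained, in effect reproving the one special case of upward-closedness that is needed (raising the positive part at a single coordinate), at the cost of the coordinate bookkeeping, which you carry out correctly (in particular the key facts that the recovery index never lies in the cancellation set, and that the intermediate sums are signed so the $\chech$ in $\newplusv$ acts as the identity).
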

\begin{proof}
  The ``only if'' part follows from~\Cref{l:cancel}.

  Conversely,  using \Cref{l:decreasey}, it suffices
  to note that $(x\newplus y)^\chech \preceq (x \oplus_i y)^\chech
  \preceq  (x \oplus y)^\chech$, which follows using~\eqref{e-reformulate} by comparing the positive parts
  (the negative parts remain the same).
  \end{proof}
\begin{remark}
The above result means that $R\subset (\Rmax^n)^2$ is a signed elimination cone if and only if
it contains $\tmax^n$ and is a convex cone with respect to the $\odot$ scalar 
multiplication and $\newplusv$ addition.
\end{remark}

\begin{proposition}
\label{p:transitivity}
Let $R\in(\Rmax^n)^2$ be a signed elimination cone and define $C=R\oplus \Delta^n$. Then 
$C$ is a monotone pre-congruence.
\end{proposition}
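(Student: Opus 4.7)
The plan is to verify the four defining conditions of \Cref{d:polar} for $C := R \oplus \Delta^n$, one at a time. Three of them---the scaling \ref{pii}, closure under $\oplus$ \ref{piii}, and the monotonicity condition \ref{pi}---are handled directly using \ref{bendi}--\ref{bendiii} together with the canonical signed decomposition $f = f^\chech \oplus (m,m)$, where $m_i := \min(f^+_i, f^-_i)$, used in the proof of \Cref{cor-canchech}. Monotonicity is the identity $(f^+, f^-) = (f^+, \zero) \oplus (f^-, f^-)$ when $f^+ \geq f^-$, with $(f^+, \zero) \in R$ by \ref{bendi}. Scaling reduces to $\lambda \odot (r \oplus (c,c)) = (\lambda r) \oplus (\lambda c, \lambda c)$ and \ref{bendii}. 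For $\oplus$-closure, since $r \oplus s$ need not be signed I re-decompose $r \oplus s = (r \oplus s)^\chech \oplus (e,e)$ and place $(r \oplus s)^\chech$ in $R$ via \ref{bendiii}, absorbing the remainder into $\Delta^n$.

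The substantive axiom is transitivity \ref{piv}: given $f, g \in C$ with $f^- = g^+$, I must produce $(f^+, g^-) \in C$, equivalently $(f^+, g^-)^\chech \in R$. A useful preparatory observation is that $f \in C$ forces $f^\chech \in R$: starting from any $f = r \oplus (c,c)$ with $r \in R$, a coordinate split on whether $f^+_i \geq f^-_i$---using the signedness of $r$---yields $f^\chech \succeq r$, so \Cref{l:decreasey} lifts $r$ to $f^\chech$. Setting $r' := f^\chech$ and $s' := g^\chech$, both lie in $R$ and \Cref{p:bendequiv} places $w := r' \newplusv s'$ in $R$ as well.

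It remains to show that $h := (f^+, g^-)^\chech \in R$. A coordinate-by-coordinate inspection, splitting on the signs of $f^+_i$ vs $f^-_i$ and of $g^+_i$ vs $g^-_i$, shows that $w$ already reproduces $h$ in three of the four resulting cases; the discrepancy is precisely when $f^+_i < f^-_i$ and $g^+_i \geq g^-_i$ (so the common value $f^-_i = g^+_i$ is cancelled by $\newplusv$ and $w_i = (\zero, \zero)$, while $h_i$ is either $(f^+_i, \zero)$ or $(\zero, g^-_i)$). To recover the missing information I would use \Cref{l:decreasey} to shrink $r'$ at those coordinates $i$ where additionally $f^+_i < g^-_i$ down to $(\zero, g^-_i)$, which is legitimate because $g^-_i \leq g^+_i = f^-_i = r'^-_i$ in that regime, producing a signed $y \in R$. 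Then $z := (w \oplus y)^\chech$ belongs to $R$ by \ref{bendiii}, and a further coordinate check shows $h \succeq z$ globally (equality everywhere except in the subcase $f^+_i \geq g^-_i$ of Case~C, where $z_i = (\zero, f^-_i)$ is trivially dominated by $h_i = (f^+_i, \zero)$). A final application of \Cref{l:decreasey} then yields $h \in R$, and $(f^+, g^-) = h \oplus (\min(f^+, g^-), \min(f^+, g^-)) \in R \oplus \Delta^n = C$.

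The main obstacle is precisely this ``reinjection'' of $g^-_i$ at the coordinates where the shared middle value $t_i = f^-_i = g^+_i$ is cancelled by $\newplusv$; the coordinatewise shrinkage of $r'$ used above relies crucially on the case inequality $g^-_i \leq f^-_i$ that holds in exactly that regime, and on the fact that this shrinkage preserves membership in $R$ via \Cref{l:decreasey} without disturbing the coordinates already handled correctly by $w$.
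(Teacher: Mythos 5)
Your proof is correct, and while it uses the same two workhorses as the paper (\Cref{l:decreasey} and stability under $\newplusv$, i.e.\ \Cref{l:cancel} via \Cref{p:bendequiv}), it organizes the transitivity axiom around a genuinely different decomposition. The paper keeps arbitrary representations $f=x\oplus(c,c)$, $g=y\oplus(d,d)$ with $x,y\in R$, replaces them by $(x^+,f^-_I)$ and $(g^+_J,z)$, applies the cancellative sum, and then reassembles $(f^+,g^-)$ directly as an element of $C$ through the index bookkeeping $I\backslash J$, $J\backslash I$, $K_1$, $K_2$, $L$ and the relations \eqref{e:g1}--\eqref{e:g2}, the lost coordinates being absorbed by diagonal terms and the extra element $(x^+,d_{K_2})\in R$. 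You instead normalize first: your preliminary observation that $f\in C$ forces $f^\chech\in R$ is correct (writing $f=r\oplus(c,c)$ with $r\in R$ signed, a coordinatewise check in the sense of \eqref{e-reformulate} gives $f^\chech\succeq r$, so \Cref{l:decreasey} applies); this is the analogue for $C=R\oplus\Delta^n$ of \Cref{cor-canchech}, which the paper only establishes for polars, and it lets you discard the diagonal parts $c,d$ altogether. Your four-case sign analysis is accurate: $w=f^\chech\newplusv g^\chech$ agrees with $h=(f^+,g^-)^\chech$ except at coordinates with $f^+_i<f^-_i$ and $g^+_i\geq g^-_i$, where $w_i=(\zero,\zero)$; the patch $y$, obtained by raising $f^\chech$ to $(\zero,g^-_i)$ at the coordinates where moreover $f^+_i<g^-_i$, is legitimate by \Cref{l:decreasey} precisely because $g^-_i\leq g^+_i=f^-_i$ there, and $(w\oplus y)^\chech$ is indeed dominated by $h$ coordinatewise (with equality outside the remaining subcase $f^+_i\geq g^-_i$), so a final application of \Cref{l:decreasey} gives $h\in R$ and $(f^+,g^-)=h\oplus(m,m)\in C$. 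What your route buys is the stronger and cleaner intermediate conclusion $(f^+,g^-)^\chech\in R$ and the elimination of the $K_1,K_2,L$ bookkeeping; the paper's route avoids needing the auxiliary fact about $f^\chech$ for general elements of $C$. Your treatment of the scaling, sum and monotonicity axioms coincides with the paper's.
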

\begin{proof}
We have to prove that $C$ satisfies all the properties listed in \Cref{d:polar}.

\Cref{d:polar}, \ref{pi}: For $(f^+,f^-)$ such that $f^+\geq f^-$ we can write $(f^+,f^-)=(f^+,\zero)\oplus (f^-,f^-)$.
By \Cref{t:spolar} \ref{bendi} we have $(f^-,\zero)\in R$, hence $(f^+,f^-)\in C$.

\Cref{d:polar}, \ref{pii}: Follows from \Cref{t:spolar} \ref{bendii}.

\Cref{d:polar} \ref{piii}: We have 
\begin{equation*}
(f^+,f^-)\oplus (g^+,g^-)=((f^+,f^-)^{\chech}\oplus (g^+,g^-)^{\chech})^{\chech}\oplus (d,d)\enspace .
\end{equation*} 
where $d_i=\min (f^+_i\oplus g^+_i,\; f^-_i\oplus g^-_i)$ for all $i$.
It suffices to check this property for the case when $f^+,f^-,g^+$ and $g^-$ are scalars, which is elementary.
It follows then that $(f^+,f^-)\oplus (g^+,g^-)\in C$ since $(f^+,f^-)^{\chech}\in R$ and $(g^+,g^-)^{\chech}\in R$ (note that if we represent 
$(f^+,f^-)=(x^+,x^-)\oplus (c,c)$ with $(x^+,x^-)\in R$ then $(f^+,f^-)^{\chech}\succeq (x^+,x^-)$). 

\Cref{d:polar}, \ref{piv}: Let us represent 
\[
(f^+,f^-)=(x^+,x^-)\oplus (c,c),\quad (g^+,g^-)=(y^+,y^-)\oplus (d,d),
\]
where $(x^+,x^-)\in R$, $(y^+,y^-)\in R$ and $f^-=g^+$.

Denote by $I$ the set of indices for which $x^-_i=f^-_i$. 
Then we can write 
\[
(f^+,f^-)=(x^+,f^-_I)\oplus (c,c),
\]
where $(x^+,f^-_I)\in R$ by \Cref{l:decreasey} (since $(x^+,x^-)\in R$ and $x^-\geq f^-_I$). We can also write 
\[
(g^+,g^-)=(y^+\oplus d,y^-)^{\chech}\oplus (d,d)=(g^+_J,z)\oplus (d,d)
\]
for some index set $J\subseteq [n]$ 
and some $z$ such that  $(y^+\oplus d,y^-)^{\chech}=(g^+_J,z)$ (it is easy to check that the nonzero components of the positive part of $(y^+\oplus d,y^-)^{\chech}$ are components of $g^+$).  Here we also have that $(g^+_J,z)=(y^+\oplus d,y^-)^{\chech}=((y^+,y^-)\oplus (d,\zero))^{\chech}$
is in $R$ by \Cref{t:spolar} \ref{bendiii}. We then can write
\begin{equation}
\label{e:fgrep}
(f^+,f^-)=(x^+,f^-_I)\oplus (c,c),\quad (g^+,g^-)=(g^+_J,z)\oplus (d,d).
\end{equation}
and denote $K=I\cap J$ and $L=\sauf{I\cup J}$ (the complement of $I\cup J$).

By \Cref{l:cancel} we obtain $((x^+,f^-_I)\newplusv (g^+_J,z))\in R$, hence\\
$((x^+,f^-_I)\newplus (g^+_J,z)) = (x^+\oplus  g^+_{\JsI}, f^-_{\IsJ}\oplus z)\in C$. 
From~\eqref{e:fgrep} and $f^-=g^+$, we have $f^-_I\oplus c=g^+_J\oplus d$, and therefore
\begin{equation}
\label{e:g1}
c_{\IsJ}\oplus f^-_{\IsJ}=d_{\IsJ},\quad c_{\JsI}=d_{\JsI}\oplus g^+_{\JsI},
\end{equation}
and
\begin{equation}
\label{e:g2}
\quad c_K\oplus f^-_K=d_K\oplus g^+_K,\quad c_L=d_L \enspace .
\end{equation}
Then \Cref{e:g1} together with $C\oplus \Delta^n\subset C$ implies that
\begin{equation*}
\begin{split}
(x^+\oplus  g^+_{\JsI}, &f^-_{\IsJ}\oplus z)\oplus (c_{\IsJ}\oplus d_{\JsI} ,c_{\IsJ} \oplus d_{\JsI})\\
&=(x^+\oplus c_{\IsJ}\oplus c_{\JsI}, z\oplus d_{\IsJ}\oplus d_{\JsI})\in C.
\end{split}
\end{equation*}
\Cref{e:g2} implies that for any $i\in K$ we have either $c_i=d_i>f^-_i(=g^+_i)$ or 
$c_i\leq f^-_i(=g^+_i)$ and $d_i\leq f^-_i(=g^+_i)$, and that we have $c_i=d_i$ for any $i\in L$.
Define the sets 
\begin{equation*}
K_1= \{i\in K \colon c_i=d_i>f^-_i\}\cup L,\quad K_2=\{i\in K\colon c_i\leq f^-_i\; \text{and}\; d_i\leq f^-_i\} \enspace .
\end{equation*}
Note that $\IsJ\cup\JsI\cup K_1\cup K_2=[n]$, and that we have $c_{K_1}=d_{K_1}$. 
Also as $(x^+, f^-_I)\in R$ and $f^-_I\geq f^-_{K_2}\geq d_{K_2}$, we obtain that $(x^+,d_{K_2})\in R$ by \Cref{l:decreasey}. 
 
We then show, using \Cref{e:fgrep} and the facts established above, that $(f^+,g^-)\in C$ since
\begin{equation*}
\begin{split}
&(f^+, g^-) = (x^+\oplus c, z\oplus d)\\
&= (x^+\oplus c_{\IsJ}\oplus c_{\JsI}\oplus c_{K_1}\oplus c_{K_2}, z\oplus d_{\IsJ}\oplus d_{\JsI}\oplus d_{K_1}\oplus d_{K_2})\\
& =(x\oplus c_{\IsJ}\oplus c_{\JsI}, z\oplus d_{\IsJ}\oplus d_{\JsI})\oplus (c_{K_1},c_{K_1})\oplus (c_{K_2},0)\oplus  (x^+,d_{K_2}).
\end{split}
\end{equation*}
This completes the proof. 
\end{proof}

\begin{lemma}\label{lem-topcoincide}
  Let $U \subset (\Smaxv)^n$ and $R:=\imath(U)\subset (\Rmax^n)^2$.
  Then, $U$ is closed in the product of the order topology
  on $\Smaxv$ iff $R$ is closed in the topology induced
  by the Euclidean topology on $(\Rmax^n)^2$.
\end{lemma}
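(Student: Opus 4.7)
The plan is to show that $\imath$ is a homeomorphism from $(\Smaxv)^n$ onto its image in $(\Rmax^n)^2$, and that this image is closed in the ambient Euclidean topology; the equivalence of the two closedness conditions will then follow immediately from general topology.

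First I would reduce to the one-dimensional case. Since $\imath$ is defined componentwise under the identification $(\Rmax^n)^2 \cong (\Rmax^2)^n$, and since the product topology on $(\Smaxv)^n$ (product of order topologies) as well as the Euclidean topology on $(\Rmax^2)^n$ are both product topologies, it suffices to show that $\imath : \Smaxv \to \Rmax^2$ is a homeomorphism onto the image $L := (\Rmax \times \{\zero\}) \cup (\{\zero\} \times \Rmax)$ of signed pairs.

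Next I would exhibit an explicit homeomorphism $\Phi : \Smaxv \to \R$ (in the usual Euclidean topology) by setting $\Phi(x) = e^{|x|}$ for $x \in \Smax^+\setminus\{\zero\}$, $\Phi(\zero)=0$, and $\Phi(x) = -e^{|x|}$ for $x \in \Smax^-\setminus\{\zero\}$. Using the characterization of $\preceq$ recorded in \Cref{table-1}, this is an order-isomorphism between $(\Smaxv,\preceq)$ and $(\R,\leq)$, so it is a homeomorphism for the corresponding order topologies. In parallel, define $\Psi : L \to \R$ by $\Psi(a,-\infty) = e^{a}$, $\Psi(-\infty,b) = -e^{b}$, and $\Psi(-\infty,-\infty) = 0$. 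A direct check, using the metric $d((x_1,y_1),(x_2,y_2)) = \sqrt{(e^{x_1}-e^{x_2})^2 + (e^{y_1}-e^{y_2})^2}$ on $\Rmax^2$, shows that $\Psi$ is a homeomorphism onto $\R$: on each of the two axes the metric equals $|u-v|$, while sequences on opposite axes approach each other (in either topology) only when both approach the origin of $\R$ and the corner $(-\infty,-\infty)$ of $L$. Since $\Psi \circ \imath = \Phi$, we conclude that $\imath = \Psi^{-1} \circ \Phi$ is a homeomorphism from $\Smaxv$ onto $L$.

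The last ingredient is that $L$ is a closed subset of $\Rmax^2$, which holds because $\{\zero\} = \{-\infty\}$ is closed in $\Rmax$ (the metric $d$ is separated, and convergence to $-\infty$ is preserved under limits), hence the condition $x^+_i \odot x^-_i = \zero$ defining signed pairs is closed in $(\Rmax^n)^2$. Taking the $n$-fold product, $\imath : (\Smaxv)^n \to L^n$ is a homeomorphism and $L^n$ is closed in $(\Rmax^n)^2$. Therefore $R = \imath(U)$ is closed in $(\Rmax^n)^2$ iff it is closed in $L^n$, iff (by the homeomorphism) $U$ is closed in $(\Smaxv)^n$. I expect no serious obstacle: the only subtle point is verifying that the metric topology and the pulled-back order topology agree at the transition across $\zero$, which is handled by the observation that sequences of opposite signs converge to a common limit only at $\zero$, equivalently at $(-\infty,-\infty)$.
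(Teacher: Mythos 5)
Your proof is correct and follows essentially the same route as the paper: reduce to $n=1$ using that both sides carry product topologies, then check that the order topology on $\Smaxv$ agrees with the subspace Euclidean topology on the signed pairs of $\Rmax^2$ (the paper phrases this as two realizations of the space obtained by gluing two copies of $[-\infty,\infty)$ at $-\infty$, while you make it explicit via the homeomorphisms $\Phi$ and $\Psi$ onto $\R$). Your extra verification that the set of signed pairs is closed in $(\Rmax^n)^2$ is a harmless strengthening; the paper records that observation separately, in the proof of \Cref{car-polar}.
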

\begin{proof}
The Euclidean topology
on $(\Rmax^n)^2$ is defined as a product topology,
and $(\Smaxv)^n$ is also equipped with a product topology,
so it suffices to show the result when $n=1$. Then
it is elementary to verify that the order topology on $\Smaxv$
coincides with the topology induced by the Euclidean
topology on the set of signed pairs of $\Rmax^2$ -- indeed,
these are two realizations
of the same topological space, obtained by gluing two copies of the half-line
$[-\infty,\infty)$ (equipped with the order topology) at point $-\infty$. 
 \end{proof}
\begin{proof}[Proof of~\Cref{car-polar}]
  (2)$\Rightarrow$(1).
  If $C$ is a closed monotone pre-congruence, then the signed
  part of $C$, $R=C^{\chech}$, is a signed elimination cone,
  by~\Cref{prop-precongtobend}. Moreover,
  observe that the subset of signed pairs
  of $(\rmax^n)^2$ is closed. Hence,
  $C^{\chech}$, which is the intersection of the closed set $C$ with this subset,
  is also closed. 

    (1)$\Rightarrow$(2). If $R$ is a signed elimination cone, then,
  by~\Cref{p:transitivity}, $C= R\oplus \Delta^n$ is a monotone
  pre-congruence. We have also that $R$ is the signed part of $C$.
  Moreover, suppose that $R$ is closed.
  Let $c_k = r_k \oplus \delta_k$ with $r_k\in R$ and $\delta_k\in \Delta^n$
  be a sequence of elements of $C$ converging to $c\in (\Rmax^n)^2$. Then the
  sequences $r_k$ and $\delta_k$ are bounded from above. By taking
  subsequences, we may assume that $r_k$ converges to some $r$
  and that $\delta_k$ converges to some $\delta$. Since $\Delta^n$
  and $R$ are closed, we deduce that $c=r \oplus \delta\in C$.
  So, $C$ is closed.
  \end{proof}
\begin{proof}[Proof of~\Cref{coro-polar}]
    (2)$\Rightarrow$(1). If $U=A^\circ$ for some subset $A\subset \Rmax^n$, then,
  by~\eqref{fromsigned-to-polar}, $R:=\imath(U)$ is the signed part of the polar $A^\polarcouple$.
  By~\Cref{t:polar:new}, $A^\polarcouple$ is a closed monotone pre-congruence.
  Hence, by~\Cref{car-polar}, (2) $\Rightarrow$ (1), $R$ is a signed elimination cone
  that is closed in the topology of $(\Rmax^n)^2$, i.e., the Euclidean topology.
  Then, by~\Cref{lem-topcoincide}, $\imath(R)\subset (\Smaxv)^n$ is closed under
  the product order topology on $(\Smaxv)^n$. 

  (1)$\Rightarrow$(2). Suppose now that $\imath(U)$ is a signed elimination cone, and that $U$ is
  closed in $(\Smaxv)^n$. Then, using again~\Cref{lem-topcoincide},
  we get that $\imath(U)$ is closed
  in the Euclidean topology of $(\Rmax^n)^2$. Then, by
  \Cref{car-polar}, (1) $\Rightarrow$ (2), $\imath(U)$ is the signed part
  of a closed monotone pre-congruence $C$ of $\Rmax^n$. Then,
  by~\Cref{t:polar:new}, we have $C=A^\polarcouple$ where $A=C^\polarsingle$.
  Hence, $\imath(U)$ is the set of signed couples $(x^+,x^-) \in (\Rmax^n)^2$
  such that $\<x^+, a>\geq \<x^-,a>$ holds for all $a\in A$. By~\eqref{fromsigned-to-polar}, this means that $U$ is the set of vectors $x\in (\Smaxv)^n$
  such that $\<x,a>\succeq \zero$ for all $a\in A$, i.e., $U=A^\circ$.
  \end{proof}

The following examples show that the operation $\oplus_i$ can be considered as a tropical analogue of the Fourier--Motzkin elimination.

\begin{example}
\label{ex:FM1}
Suppose that a subset $A\subseteq \Rmax^3$ is contained in the following signed halfspaces: $a_1\geq a_2$ and $a_2\geq a_3$. This means that the vectors 
$(0,\ominus 0,\zero)^T$ and $(\zero, 0,\ominus 0)^T$ are contained in $A^{\circ}$. The corresponding vectors $x^1$, $x^2$ with components in $\Rmax^2$ and the vector $x^3=x^1\oplus_2 x^2$ are shown below:
\begin{equation*}
x^1=
\begin{pmatrix}
(0,-\infty)\\
(-\infty,0)\\
(-\infty,-\infty)
\end{pmatrix},\quad
x^2=
\begin{pmatrix}
(-\infty,-\infty)\\
(0,-\infty)\\
(-\infty, 0)
\end{pmatrix},\quad
x^3=
\begin{pmatrix}
(0,-\infty)\\
(-\infty,-\infty)\\
(-\infty, 0)
\end{pmatrix}\enspace .
\end{equation*}
The last vector corresponds to the vector $(0,\zero, \ominus 0)^T$ and to the signed halfspace $a_1\geq a_3$, which also contains $A$. In this way,
a transitivity property of the order relation $\leq$ is expressed
in terms of stability  under the operation $\oplus_2$.
\end{example}

\begin{example}
\label{ex:FM2}
Consider the following four vectors whose components are tropical signed numbers: $(1\;, \ominus 1,\; 3,\; \ominus 2)^T$,  $(2,\; \ominus 4,\; 1,\; \ominus 4)^T$, 
$(\ominus 3,\; 2,\; \ominus 4,\; 1)^T$ and $(\ominus 1,\; 3,\; 2,\; \ominus 3)^T$. They correspond to the following vectors with components in $\Rmax^2$:
\begin{equation*}
x^1=
\begin{pmatrix}
(1,-\infty)\\
(-\infty,1)\\
(3,-\infty)\\
(-\infty,2)
\end{pmatrix},\ 
x^2=
\begin{pmatrix}
(2,-\infty)\\
(-\infty,4)\\
(1,-\infty)\\
(-\infty,4)
\end{pmatrix}\quad
x^3=
\begin{pmatrix}
(-\infty,3)\\
(2,-\infty)\\
(-\infty,4)\\
(1,-\infty)
\end{pmatrix}
x^4=
\begin{pmatrix}
(-\infty,1)\\
(3,-\infty)\\
(2,-\infty)\\
(-\infty,3)
\end{pmatrix} \enspace .
\end{equation*}
We have four ways to eliminate the first component by means of $\oplus_1$ operation: 
$y^1=((-2) x^3 \oplus_1 x^1)^{\chech}$, $y^2=(x^4\oplus_1 x^1)^{\chech}$, $y^3=((-1)x^3\oplus_1 x^2 )^{\chech}$ and $y^4=(x^4 \oplus_1  (-1) x^2)^{\chech}$  and it can be checked that the resulting vectors with components in $\Rmax^2$ correspond to the following vectors with tropical signed components: 
\begin{equation*}
\begin{split}
y^1&=
\begin{pmatrix}
(-\infty,-\infty)\\
(-\infty, 1)\\
(3,-\infty)\\
(-\infty,2)
\end{pmatrix}\sim 
\begin{pmatrix}
\zero\\
\ominus 1\\
3\\
\ominus 2
\end{pmatrix},\quad
y^2=
\begin{pmatrix}
(-\infty,-\infty)\\
(3, -\infty)\\
(3,-\infty)\\
(-\infty,3)
\end{pmatrix}\sim 
\begin{pmatrix}
\zero\\
3\\
3\\
\ominus 3
\end{pmatrix}\\
y^3&=
\begin{pmatrix}
(-\infty,-\infty)\\
(-\infty, 4)\\
(-\infty,3)\\
(-\infty,4)
\end{pmatrix}\sim 
\begin{pmatrix}
\zero\\
\ominus 4\\
\ominus 3\\
\ominus 4
\end{pmatrix},\quad
y^4=
\begin{pmatrix}
(-\infty,-\infty)\\
(3, -\infty)\\
(2,-\infty)\\
(-\infty,3)
\end{pmatrix}\sim 
\begin{pmatrix}
\zero\\
3\\
2\\
\ominus 3
\end{pmatrix} \enspace .
\end{split}
\end{equation*}
In particular, the signed vectors
$(-2)x^3$ and $x^1$ encode the relations
$a_2 \oplus (-1)a_4\geq 1 a_1 \oplus 2a_3$
and $1a_1 \oplus 3 a_3 \geq 1a_2 \oplus 2a_4$,
respectively.
By adding $3a_3$ to the former relation, we deduce 
that $a_2 \oplus (-1)a_4 \oplus 3a_3 \geq 1a_1 \oplus 3a_3 \geq  1a_2 \oplus 2a_4$,
and after eliminating terms which cannot be active, we deduce
that $3a_3 \geq 1a_2 \oplus 2a_4$, which is precisely the inequality
expressed by the element $y^1=((-2)x^3 \oplus_1 x^1)^{\chech}$.
In that way, we see that the stability under the addition $\oplus_1$ allows
one to express the elimination of variable $a_1$ by a Fourier--Motzkin type
approach. See also Allamigeon et al.~\cite{uli2013} for an algorithmic discussion of tropical Fourier--Motzkin elimination.
\end{example}

\subsection{Valuation of polars}

In this subsection we relate signed polars to images by valuation of classical polars.
Let us denote by $\cl$ the closure of a set and by $\int$ its interior.
We use the product of order topologies, both on $(\Smaxv)^n$ and on $\puiseux^n$. 
Let us first observe the following useful property of tropical polars.

\begin{lemma}
\label{l:clint}
Let $A\subset \rmax^n$. Then $\cl\int (A^{\polar})=A^{\polar}$.
\end{lemma}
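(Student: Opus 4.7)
My plan is to prove the nontrivial inclusion $A^{\polar} \subseteq \cl \int(A^{\polar})$ by approximating each $x \in A^\polar$ by a sequence of strictly larger signed vectors. The inclusion $\cl \int(A^\polar) \subseteq A^\polar$ follows immediately from the fact that $A^\polar$ is closed in $(\Smaxv)^n$ (which is given by \Cref{coro-polar}, implication (ii)$\Rightarrow$(i)).

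The key tool is the upward-closedness property of signed polars: by \Cref{coro-polar}, $\imath(A^\polar)$ is a signed bend cone, so \Cref{l:decreasey} (combined with the reformulation \eqref{e-reformulate}) translates to the statement that if $x \in A^\polar$ and $y \in (\Smaxv)^n$ satisfies $y_i \succeq x_i$ for every $i$, then $y \in A^\polar$. So my task reduces to producing, for each $x \in A^\polar$, a sequence $x^{(k)} \to x$ with $x^{(k)}_i \succ x_i$ strictly for all $i$.

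Such sequences exist because $\Smaxv$ is densely ordered and has no maximum. Concretely, I would handle the three cases $x_i \in \Smax^+ \setminus \{\zero\}$, $x_i = \zero$, $x_i \in \Smax^- \setminus \{\zero\}$ by explicit perturbations: nudging a positive real component upward by $1/k$, replacing $\zero$ by the tropical element corresponding to $-k$ viewed inside $\Smax^+$, or reducing the magnitude of a negative component by $1/k$. In each case the perturbed element is strictly $\succ x_i$ in the order of $\Smaxv$ and converges to $x_i$ in the order topology, so $x^{(k)} \to x$ coordinatewise. Applying upward-closedness gives $x^{(k)} \in A^\polar$.

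Finally, I need $x^{(k)} \in \int A^\polar$. Since $x^{(k)}_i \succ x_i$ strictly and $\Smaxv$ is densely ordered, for each $i$ I can choose $u_i, v_i \in \Smaxv$ with $x_i \prec u_i \prec x^{(k)}_i \prec v_i$; the set $V := \prod_i (u_i, v_i)$ is then an open neighborhood of $x^{(k)}$. For any $y \in V$, I have $y_i \succ u_i \succ x_i$ for all $i$, so $y \succeq x$ componentwise, and upward-closedness yields $y \in A^\polar$. Hence $V \subseteq A^\polar$, so $x^{(k)} \in \int A^\polar$, and letting $k\to\infty$ gives $x \in \cl \int A^\polar$. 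The only step that required any thought was identifying upward-closedness under $\succeq$ as the right ``regularity'' property of signed polars; once that is in hand, the argument is a straightforward perturbation.
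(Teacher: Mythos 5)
Your proof is correct, and it follows the same overall strategy as the paper's: approximate $x\in A^{\polar}$ by coordinatewise strictly larger vectors and show each such perturbation is interior by exhibiting an open box. The difference lies in how interiority is justified. The paper keeps the negative coordinates of $x$ untouched, boosts the positive coordinates by $t_k$, replaces $\zero$ coordinates by positive elements, and then verifies by a direct scalar-product estimate (using the slack $t'_k<t_k$) that a box around the perturbed point, which may contain vectors whose negative parts slightly exceed those of $x$ in modulus, still lies in $A^{\polar}$. You instead perturb \emph{all} coordinates strictly upward in the order of $\Smaxv$ (including strictly shrinking the moduli of negative coordinates), which lets you choose the box entirely above $x$ componentwise and conclude by the upward-closedness of $A^{\polar}$, imported from \Cref{l:decreasey} together with \eqref{e-reformulate} via \Cref{coro-polar}. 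This is legitimate: neither \Cref{coro-polar} nor \Cref{l:decreasey} depends on \Cref{l:clint} (which is only used later, in \Cref{t:polval}), so there is no circularity, and the convergence and density facts you use about the order topology on $\Smaxv$ are all valid. Your route trades the paper's explicit inequality computation for a more conceptual monotonicity argument; the only mild overkill is invoking \Cref{coro-polar} for upward-closedness and for closedness of $A^{\polar}$, since both can be checked in one line directly from the definition ($y\succeq x$ componentwise with $y$ signed gives $\langle y^+,a\rangle\geq\langle x^+,a\rangle\geq\langle x^-,a\rangle\geq\langle y^-,a\rangle$ for every $a\in A$), which would make your proof self-contained exactly as the paper's is.
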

\begin{proof}
\newcommand{\xx}{x}
\newcommand{\zz}{z}
Let $\xx\in A^{\polar}$. Denote $N_+=\{i\colon \xx_i\succ\zero\}$, $N_-=\{i\colon \xx_i\prec\zero\}$ and 
$N_0=\{i\colon \xx_i=\zero\}$. Using the canonical representation $\xx=\xx^+\ominus \xx^-$ where both $\xx^+$ and $\xx^-$ are positive (in the sense of symmetrization) vectors with disjoint support, 
we define vectors $\zz^k\in(\Smaxv)^n$ for $k\geq 1$ as follows:

\begin{equation*}
\zz^k_i=
\begin{cases}
t_k+ \xx_i^+, &\text{if $i\in N_+$},\\
\ominus \xx_i^-, &\text{if $i\in N_-$},\\
M_k, &\text{ if $i\in N_0$},
\end{cases}
\end{equation*} 
where $M_k\succ\zero$ (positive in the sense of symmetrization) and $t_k>0$ (a real number, positive in the usual sense). Vectors $\zz^k$ can be chosen in such a way that 
$\lim_{k\to\infty} \zz^k=\xx$ in the product order topology. For example, we can choose $M_k=\log \epsilon_k$ where $\epsilon_k>0$ and $t_k$ satisfying  
$\epsilon_k=(e^{t_k}-1)\cdot \max_{i\in N_+} e^{\xx_i^+}$, with $\lim_{k\to\infty} \epsilon_k=0$. 
We then see that $e^{M_k}=\epsilon_k$ and $|e^{\zz^{k+}_i}-e^{\xx_i^+}|\leq\epsilon_k$, ensuring that $\lim_{k\to\infty} \zz^k=\xx$.



Now fix  $t'_k>0$ such that $t'_k<t_k$, and consider $y\in(\Smaxv)^{n}$ with canonical representation $y^+\ominus y^-$ 
that belongs to the open neighborhood of $\zz^k$ consisting of vectors $y$ such that
\begin{equation}
\label{e:neighborhood}
\begin{split}
& \text{for $i\in N_-$:}\quad   \zero\succ y_i \succ  \ominus (\xx_i^-+t'_k)\\
& \text{for $i\in N_+$:}\quad   y_i\succ \xx_i^++t'_k,\\
& \text{for $i\in N_0$:}\quad   y_i\succ \zero.
\end{split}
\end{equation}
Let  $a\in A$, then, 
using properties~\eqref{e:neighborhood}, we obtain
\[
\langle y^+,a\rangle \geq \bigoplus_{i\in N_0} y_i^+\odot a_i\oplus \bigoplus_{i\in N_+} y_i^+\odot a_i\geq 
\bigoplus_{i\in N_-} y_i^-\odot a_i=\langle y^-,a\rangle,
\]
as it can be argued that 
\[
\bigoplus_{i\in N_+} y_i^+\odot  a_i\geq t'_k+ \bigoplus_{i\in N_+} \xx_i^+\odot  a_i\geq t'_k +\bigoplus_{i\in N_-} \xx_i^-\odot  a_i\geq \bigoplus_{i\in N_-} y_i^-\odot  a_i.
\]
We thus obtain $y\in A^{\polar}$ for any such $y$, implying that $\zz^k\in\int(A^{\polar})$. As $\lim_{k\to\infty} \zz^k=\xx$, the 
claim follows.
\end{proof}

The following theorem is the main result of this subsection.

\begin{theorem}
  \label{t:polval}
  Let $\bm{A}$ be a closed semialgebraic subset of $\puiseux_{\geq 0}^n$. Then,
  \begin{align}
    \sval(\bm{A}^{\polar})= (\val \bm{A})^{\polar}.
        \label{l:easyincl}
\end{align}
\end{theorem}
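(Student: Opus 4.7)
My plan is to prove the two inclusions of $\sval(\bm{A}^\polar) = (\val \bm{A})^\polar$ separately. The forward inclusion $\sval(\bm{A}^\polar) \subset (\val \bm{A})^\polar$ is elementary, while the reverse is the substantive content and combines an interior-point lifting construction with a closedness argument.

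For the forward inclusion, take $\bm{x} \in \bm{A}^\polar$ and decompose componentwise $\bm{x} = \bm{x}^+ - \bm{x}^-$ with $\bm{x}^+, \bm{x}^- \in \puiseuxpos^n$ of disjoint supports. For any $\bm{a} \in \bm{A}$, the polar inequality rearranges to $\sum_i \bm{x}_i^+ \bm{a}_i \geq \sum_i \bm{x}_i^- \bm{a}_i$ in $\puiseuxpos$. Since both sides are sums of nonnegative Puiseux series, their valuations equal the tropical inner products $\bigoplus_i \val(\bm{x}_i^+) \odot \val(\bm{a}_i)$ and $\bigoplus_i \val(\bm{x}_i^-) \odot \val(\bm{a}_i)$, with no sign cancellation. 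Convexity of $\val$ on $\puiseuxpos$ preserves the inequality, which, together with the componentwise identification $\sval(\bm{x}_i) = \val(\bm{x}_i^+) \ominus \val(\bm{x}_i^-)$ and~\eqref{fromsigned-to-polar}, reads $\langle \sval \bm{x}, \val \bm{a}\rangle \succeq \zero$.

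For the reverse inclusion, fix $y \in (\val \bm{A})^\polar$ and use \Cref{l:clint} to obtain a sequence $y^{(k)} \in \int((\val \bm{A})^\polar)$ converging to $y$. I claim each $y^{(k)}$ is the signed valuation of an explicit Puiseux lift in $\bm{A}^\polar$. Being an interior point forces a strict tropical dominance $\langle (y^{(k)})^+, \val \bm{a}\rangle > \langle (y^{(k)})^-, \val \bm{a}\rangle$ for every $\bm{a} \in \bm{A}$ (whenever either side is finite): the perturbation decreasing each finite $|(y^{(k)})^+_i|$ by $\epsilon$, enlarging each finite $|(y^{(k)})^-_i|$ by $\epsilon$, and replacing each entry $y^{(k)}_i = \zero$ by $\ominus \epsilon$ must remain in the polar, which tightens the tropical inequality by $2\epsilon$. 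Given strict dominance, define the canonical lift $\bm{x}^{(k)}_i := \sign(y^{(k)}_i)\, t^{|y^{(k)}_i|}$ (with $\bm{x}^{(k)}_i := 0$ when $y^{(k)}_i = \zero$). For any $\bm{a} \in \bm{A}$, writing the leading term of each $\bm{a}_i$ as $c_i t^{\val \bm{a}_i}$ with $c_i \geq 0$, the leading monomial of $\langle \bm{x}^{(k)}, \bm{a}\rangle$ equals $\bigl(\sum_{i \in I^+} c_i\bigr) t^M$, where $M = \langle (y^{(k)})^+, \val \bm{a}\rangle$ and $I^+$ is the argmax among positive-sign indices; strict dominance prevents any negative-sign index from contributing at this order, so the leading coefficient is strictly positive. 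Hence $\bm{x}^{(k)} \in \bm{A}^\polar$ with $\sval(\bm{x}^{(k)}) = y^{(k)}$.

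It remains to conclude $y = \lim y^{(k)} \in \sval(\bm{A}^\polar)$, which requires $\sval(\bm{A}^\polar)$ to be closed in the product order topology on $(\Smax^\vee)^n$. Here the semialgebraic hypothesis on $\bm{A}$ is essential: by Tarski-Seidenberg, $\bm{A}^\polar$ is a closed semialgebraic subset of $\puiseux^n$, and a Bieri-Groves-type theorem in the real signed setting (as developed in~\cite{AGS}) yields that the signed valuation image of a closed semialgebraic set is a closed (indeed semilinear) subset of $(\Smax^\vee)^n$. I expect this closedness step to be the main obstacle: the interior-lift argument is the geometric heart of the proof, but without the closedness of $\sval(\bm{A}^\polar)$ the approximation by interior points does not suffice to produce a lift of $y$ itself.
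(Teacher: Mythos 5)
Your proposal is correct and follows essentially the same route as the paper: the easy inclusion by applying the (signed) valuation to the polar inequality, the lifting of interior points of $(\val\bm{A})^{\polar}$ via strict tropical dominance and monomial lifts (the content of \Cref{l:tpolar} and \Cref{l:intincl}), the density statement $\cl\int((\val\bm{A})^{\polar})=(\val\bm{A})^{\polar}$ of \Cref{l:clint}, and the closedness of $\sval(\bm{A}^{\polar})$ for the closed semialgebraic set $\bm{A}^{\polar}$, quoted from~\cite{AGS} (or~\cite{Jell2020}). The only difference is presentational: you argue pointwise with a sequence of interior points, while the paper takes closures in the sandwich of inclusions, and the closedness step you flag as the ``main obstacle'' is exactly the external result the paper also invokes, so there is no gap.
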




The proof of~\Cref{t:polval} relies on auxiliary results.

\begin{lemma}
\label{l:tpolar}
Let $A$ be a subset of $\Rmax^n$ and let $x=x^+\ominus x^-$ belong to 
$\int A^{\polar}$. Then 
\begin{equation*} 
\langle x^+,a\rangle >\langle x^-,a\rangle,\quad\forall a\in A\backslash\{\zero\}.
\end{equation*}
\end{lemma}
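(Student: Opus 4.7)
The plan is to argue by contrapositive: assuming that $\langle x^+,a\rangle = \langle x^-,a\rangle$ holds for some $a \in A \setminus \{\zero\}$, I will exhibit in every neighborhood of $x$ an element $y$ such that $\langle y^+,a\rangle < \langle y^-,a\rangle$, hence $y \notin A^{\polar}$, contradicting $x \in \int A^{\polar}$. Let $M$ denote the common value of $\langle x^+,a\rangle$ and $\langle x^-,a\rangle$.

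The key construction is to perturb $x$ so as to strictly decrease every coordinate in the $\preceq$-order on $\Smaxv$. For a small parameter $\delta > 0$, define $y_i$ to have positive part $x^+_i - \delta$ when $x_i \succ \zero$; negative part $x^-_i + \delta$ when $x_i \prec \zero$; and $y_i = \ominus \delta$ when $x_i = \zero$. Since basic neighborhoods in the order topology on $\Smaxv$ are open intervals containing the given point, and since intervals $(\ominus r, r)$ with $r \succ \zero$ form a neighborhood base at $\zero$, the perturbation $y$ lies in any prescribed neighborhood of $x$ in the product order topology, provided $\delta$ is sufficiently small.

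It remains to verify $\langle y^+,a\rangle \prec \langle y^-,a\rangle$. By construction, only coordinates with $x_i \succ \zero$ contribute to $\langle y^+,a\rangle$, each contributing $x^+_i + a_i - \delta$, so $\langle y^+,a\rangle = \langle x^+,a\rangle - \delta$ whenever this value is finite. Symmetrically, $\langle y^-,a\rangle \geq \langle x^-,a\rangle + \delta$, with the extra lower bound $\delta + a_i$ at any index $i$ with $x_i = \zero$. If $M$ is finite, the desired inequality follows at once from $M - \delta < M + \delta$. If $M = \zero$ (i.e., $-\infty$ in real terms), then the hypothesis $a \neq \zero$ yields an index $i_0$ with $a_{i_0} \neq \zero$, and for such $i_0$ one must have $x_{i_0} = \zero$ (otherwise one of $\langle x^\pm,a\rangle$ would exceed $\zero$); consequently $y_{i_0} = \ominus\delta$ forces $\langle y^-,a\rangle \geq \delta + a_{i_0} \succ \zero = \langle y^+,a\rangle$.

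The principal subtlety I anticipate lies in the coordinates where $x_i = \zero$: these are the ``gluing points'' of the order topology on $\Smaxv$, and the perturbation must deliberately be taken on the negative side of $\zero$ so that it contributes to $\langle y^-,a\rangle$ rather than $\langle y^+,a\rangle$. This choice is precisely what makes the case $M = \zero$ work. Apart from this bookkeeping, the argument is a direct case analysis; notice that the hypothesis that $A$ is a tropical convex cone is not actually used, only that $A \subseteq \Rmax^n$ and $a \in A \setminus \{\zero\}$.
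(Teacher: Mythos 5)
Your overall strategy is the same as the paper's (perturb $x$ slightly so that the defining inequality of $A^{\polar}$ fails against the witness $a$, contradicting $x\in\int A^{\polar}$; the paper does this in two cases, according to whether the common value $M$ is $\zero$ or finite). However, there is a genuine gap in your construction at the coordinates where $x_i=\zero$. You set $y_i=\ominus\delta$ with $\delta>0$ a small real number and claim that $y$ then lies in any prescribed neighborhood of $x$ for $\delta$ small. This is false in the order topology on $\Smaxv$: the element $\zero$ plays the role of $-\infty$, a neighborhood base at $\zero$ is given by the intervals $(\ominus r,r)$ with $r$ ranging over \emph{all} reals (in particular arbitrarily negative ones), and $\ominus\delta\in(\ominus r,r)$ forces $\delta<r$ as real numbers. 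So for the neighborhood with, say, $r=-1$, no positive real $\delta$ puts $\ominus\delta$ inside; indeed $\ominus\delta\to\ominus 0\neq\zero$ as $\delta\to 0^{+}$. Consequently, whenever $x$ has a zero coordinate, your $y$ does not approach $x$, and no contradiction with $x\in\int A^{\polar}$ is obtained — and the case $M=\zero$, where the zero-coordinate perturbation is the whole point, is exactly the case your argument needs it. You correctly identified that the perturbation at these gluing points must be taken on the negative side, but the real subtlety is its \emph{magnitude}: closeness to $\zero$ means the modulus must tend to $-\infty$ (tropically, to $\zero$), not to the real number $0$.

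The fix is local: decouple the two parameters. Keep the shift by $\pm\delta$, $\delta\to 0^{+}$, only at the coordinates where $x_i\succ\zero$ or $x_i\prec\zero$ (this is the paper's scaling of the negative part by $t\succ\unit$ close to $\unit$, and it alone settles the case $M$ finite — at zero coordinates you may simply leave $y_i=\zero$ there), and at a zero coordinate $i_0$ with $a_{i_0}\neq\zero$ take $y_{i_0}=\ominus s$ with $s$ a sufficiently negative real, so that $y_{i_0}\to\zero$; then $\langle y^{-},a\rangle\geq s\odot a_{i_0}\succ\zero=\langle y^{+},a\rangle$ still holds for every real $s$, which is exactly the paper's Case 1 perturbation $y=x\ominus t e_{i_0}$. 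With that correction your computation of $\langle y^{\pm},a\rangle$ and the two-case analysis (including the remark that the cone hypothesis on $A$ is not used) are sound.
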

\begin{proof}
By contradiction, suppose that there exists $a\in A\backslash\{\zero\}$ 
such that $\langle x^+,a\rangle =\langle x^-,a\rangle$.

Suppose first that the latter value is equal to $\zero$,
and let $i\in[n]$ be such that $a_i \neq \zero$.
Let $e_i$ denote the $i$th unit vector of $\Smax^n$, consider
$y= x\ominus t e_i$, with $t$ positive. Then, $y_j = x_j$ holds for all $j\neq i$
and $y_i = \ominus t$. It follows that
$\langle y^+,a\rangle=\zero< \langle y^-,a\rangle =  t a_i$.
This contradicts that $x$ belongs to $\int A^{\polar}$. 

Suppose now that $\langle x^+,a\rangle =\langle x^-,a\rangle \neq \zero$.
Then there exists $t\in\Rmax$ such that $t>\unit$ and $y=x^+\ominus t x^-$ still belongs to $A^{\polar}$. However, we have $\langle y^+,a\rangle<\langle y^-,a\rangle$ in contradiction 
with $y\in A^{\polar}$, and this proves the claim.
\end{proof}

We deduce the following.

\begin{lemma}
\label{l:intincl}
$\int((\val\bm{A})^{\polar})\subset \sval(\bm{A}^{\polar})$.
\end{lemma}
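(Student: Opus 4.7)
The plan is to construct a concrete lift: given $x \in \int((\val\bm{A})^{\polar})$ written in its canonical form $x = x^+ \ominus x^-$, I would define a vector $\bm{x} \in \puiseux^n$ by setting $\bm{x}_i = t^{x_i^+}$ when $x_i \in \Smax^+\setminus\{\zero\}$, $\bm{x}_i = -t^{x_i^-}$ when $x_i \in \Smax^-\setminus\{\zero\}$, and $\bm{x}_i = 0$ when $x_i = \zero$. By construction, $\sval(\bm{x}) = x$, so it suffices to show $\bm{x} \in \bm{A}^{\polar}$.

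To verify this, I fix $\bm{a} \in \bm{A}$ and decompose $\langle \bm{x}, \bm{a}\rangle = P - Q$, where $P := \sum_{i : x_i \succ \zero} t^{x_i^+}\bm{a}_i$ and $Q := \sum_{i : x_i \prec \zero} t^{x_i^-}\bm{a}_i$. The key observation is that the hypothesis $\bm{A} \subset \puiseuxpos^n$ forces each summand in $P$ and in $Q$ to be nonnegative, so no cancellations can occur within $P$ or within $Q$. Consequently,
\begin{equation*}
\val P = \max_{i}(x_i^+ + \val \bm{a}_i) = \langle x^+, \val \bm{a}\rangle, \qquad \val Q = \langle x^-, \val \bm{a}\rangle,
\end{equation*}
and the leading coefficient of $P$ is a sum of products of positive reals, hence strictly positive.

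The next step is to invoke \Cref{l:tpolar}. Since that lemma requires a tropical convex cone, I would replace $\val\bm{A}$ by $K$, the closure of the tropical convex cone it generates. A short verification (using that $\{x : \langle x, a\rangle \sgeq \zero\}$ is closed in $(\Smaxv)^n$ and that the polar of a set depends only on the closed tropical convex cone it generates) gives $K^{\polar} = (\val\bm{A})^{\polar}$, so $x \in \int K^{\polar}$ too. \Cref{l:tpolar} then yields $\langle x^+, a\rangle > \langle x^-, a\rangle$ for every $a \in K\setminus\{\zero\}$, in particular for $a = \val \bm{a}$ whenever $\bm{a} \neq 0$.

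Combining these two ingredients, if $\bm{a} \neq 0$ then $\val P > \val Q$, so $\val(P-Q) = \val P$ with positive leading coefficient, giving $\langle \bm{x}, \bm{a}\rangle > 0$. If $\bm{a} = 0$, then trivially $\langle \bm{x}, \bm{a}\rangle = 0 \geq 0$. Either way $\bm{x} \in \bm{A}^{\polar}$, hence $x = \sval(\bm{x}) \in \sval(\bm{A}^{\polar})$. The main subtlety, and the only place where the ambient sign assumption $\bm{A}\subset\puiseuxpos^n$ is genuinely used, is the no-cancellation argument for $\val P$ and $\val Q$; everything else is a formal consequence of \Cref{l:tpolar} together with the construction of the lift.
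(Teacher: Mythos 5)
Your proposal is correct and follows essentially the same route as the paper: the same monomial lift $\bm{x}_i=\sign(x_i)\,t^{|x_i|}$ combined with \Cref{l:tpolar}, your no-cancellation computation of $\val P$ and $\val Q$ (using $\bm{A}\subset\puiseuxpos^n$) being exactly the detail the paper compresses into ``therefore also $\langle \bm{x},\bm{a}\rangle\geq 0$ for the lift''. Your additional step of replacing $\val\bm{A}$ by the closed tropical convex cone $K$ it generates, with $K^{\polar}=(\val\bm{A})^{\polar}$, is a legitimate (and welcome) refinement, since the paper invokes \Cref{l:tpolar} on $\val\bm{A}$ directly even though that set need not itself be a tropical convex cone.
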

\begin{proof}
Take $x\in\int((\val\bm{A}))^{\polar})$ and consider $\bm{x}$ with coordinates defined by 
$\bm{x}_i=\sign(x_i)t^{|x_i|}$. Then for each $\bm{a}\in\bm{A}$ we have 
$\langle x^+,\val(\bm{a})\rangle>\langle x^-,\val(\bm{a})\rangle$ by \Cref{l:tpolar} and
therefore also $\langle \bm{x},\bm{a}\rangle\geq 0$ for the lift. This shows $x\in\sval(\bm{A}^{\polar})$.
\end{proof}

We are now ready to complete the proof of the main statement.

\begin{proof}[Proof of \Cref{t:polval}]

  We first prove the inclusion $\sval(\bm{A}^{\polar})\subset (\val\bm{A})^{\polar}.$ For each $x\in\sval(\bm{A}^{\polar})$ and $\bm{x}$ such that $x=\sval(\bm{x})$ we 
have that  $\langle \bm{x},\bm{a}\rangle\geq\bm{0}$ 
for all $\bm{a}\in\bm{A}$, which we can write as 
$\langle \bm{x^+},\bm{a}\rangle\geq\langle\bm{x^-},\bm{a}\rangle$. Taking valuation of this 
we obtain $\langle x^+,\val(\bm{a})\rangle\geq \langle x^-,\val(\bm{a})\rangle$ for each 
$a\in\bm{A}$, which shows the inclusion.

Now, combining \eqref{l:easyincl} and \Cref{l:intincl} we obtain
\begin{align}
\int((\val\bm{A})^{\polar})\subset \sval(\bm{A}^{\polar})\subset (\val\bm{A})^{\polar}\label{e-chain}
\end{align}
We recall that if $\mathbf{B}$ is a closed semi-algebraic subset of $\K^n$,
then, $\sval(\mathbf{B})$ is a closed subset of $(\Smaxv)^n$, this
follows from  Theorem 6.9 of~\cite{Jell2020} or alternatively
from of Corollary 4.11 of~\cite{AGS}.
Hence, applying the topological closure to the chain of
inclusions~\eqref{e-chain},  and using that: 1) $\cl\int((\val\bm{A})^{\polar})= (\val\bm{A})^{\polar}$ by \Cref{l:clint},
 2)  $\sval(\bm{A}^{\polar})$ is closed by the above property; 3) $(\val\bm{A})^{\polar}$ is trivially closed (being a tropical polar), we obtain the ``sandwich'' 
\begin{equation*}
(\val\bm{A})^{\polar}\subset \sval(\bm{A}^{\polar})\subset (\val\bm{A})^{\polar},
\end{equation*}
from which the claim follows.
\end{proof}

\begin{remark}
We know from~\cite{AGS} that if $\bfA$ is a closed
semi-algebraic subset of $\mathbb{K}_{\geq 0}^n$, then, the subset
of $\val (\bfA)$ with a given support is a closed semilinear
set. Moreover, this subset can be computed from a definition of $\bfA$
by polynomial inequalities, under a genericity condition, see~\cite[Coro~4.8]{AGS}.
\end{remark}

\section{Tropicalization of matrix cones and their polars}
\label{sec-completely}

\subsection{Tropical positive definite matrices and completely positive matrices}
\label{sec-tropsdp}

We say that a symmetric matrix $A\in (\Smax^\vee)^{n\times n}$ is a
{\em signed tropical positive semidefinite matrix}. 
if $x^T Ax \sgeq 0$ for all $x\in (\Smax^\vee)^n$

We denote by 
$\psd(\Smax)$
the set of {\em signed tropical positive semidefinite matrix}
and denote by
$\psd(\Rmax) = \psd(\Smax)\cap \Rmax^{n\times n}$
the set of {\em tropical positive semidefinite matrices}.
We first examine the special case of $2\times 2$ matrices. 
\begin{lemma}\label{sdp-dim2}
  Let $a,b,c\in \Smax^\vee$. Then,
 \[
 a x_1^2 \oplus bx_1x_2 \oplus cx_2^2 \sgeq \zero ,\; \forall x_1,x_2\in \Smax^\vee
 \]
  holds if and only if
  $a\sgeq 0,\; c\sgeq \zero$ and $b^2 \sleq ac.$
  \end{lemma}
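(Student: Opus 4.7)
The plan is to prove both directions by elementary substitutions, the key insight being that the condition $b^2 \sleq ac$ translates, in terms of the real-valued magnitudes, to $2|b|\le |a|+|c|$, which is precisely the threshold above which the cross term $bx_1x_2$ can dominate the diagonal terms $ax_1^2,\ cx_2^2$ in magnitude and flip the sign of the quadratic form.

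For the forward direction, setting $(x_1,x_2) = (\unit, \zero)$ gives $a \sgeq \zero$ and $(x_1,x_2) = (\zero, \unit)$ gives $c \sgeq \zero$. To prove $b^2 \sleq ac$, one may assume $b \neq \zero$ (otherwise it is trivial). If $a = \zero$, taking $x_2 = \unit$ and $x_1 \in \Smax^\vee$ with sign opposite to that of $b$ and magnitude large enough that $|b|+|x_1| > |c|$ makes $bx_1x_2$ strictly dominant in magnitude and negative, contradicting $Q := ax_1^2 \oplus bx_1x_2 \oplus cx_2^2 \sgeq \zero$; an analogous argument handles $c = \zero$. So one may assume $a, c \in \Smax^+ \setminus \{\zero\}$, in which case both $b^2$ and $ac$ lie in $\Smax^+$ and the inequality $b^2 \sleq ac$ is equivalent to $2|b| \le |a|+|c|$. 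Arguing by contradiction, suppose $2|b| > |a|+|c|$; then the interval $(|c|-|b|,\, |b|-|a|)$ is non-empty, and for any real $s$ in it one has $|b|+s > |a|+2s$ and $|b|+s > |c|$. Taking $x_1 = \alpha$ with $|\alpha| = s$ and sign opposite to that of $b$, and $x_2 = \unit$, the cross term $b\alpha$ is negative and its magnitude $|b|+s$ strictly exceeds the magnitudes of the positive terms $a\alpha^2$ and $c$, so $Q \ssleq \zero$, contradicting the hypothesis.

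For the converse, assume $a, c \sgeq \zero$ and $b^2 \sleq ac$, fix $x_1,x_2 \in \Smax^\vee$, and set $P := ax_1^2 \oplus cx_2^2 \sgeq \zero$. If $bx_1x_2 \sgeq \zero$ then $Q = P \oplus bx_1x_2$ is a sum of positives, hence $\sgeq \zero$. Otherwise $bx_1x_2 \sleq \zero$, so $Q = P \ominus |bx_1x_2|$, and it suffices to show $|P| \ge |bx_1x_2|$. This is a tropical AM--GM inequality:
\[
|P| = \max\bigl(|a|+2|x_1|,\ |c|+2|x_2|\bigr) \ge \tfrac{|a|+|c|}{2} + |x_1|+|x_2| \ge |b| + |x_1|+|x_2| = |bx_1x_2|,
\]
where the second inequality uses $2|b| \le |a|+|c|$. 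The main obstacle is the forward direction, where one must simultaneously control both the magnitude and the sign of the test vector $x_1$ so that the dominant term of $Q$ ends up strictly negative.
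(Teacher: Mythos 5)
Your proof is correct and follows essentially the same elementary route as the paper: unit-vector substitutions for the necessity of $a,c\sgeq\zero$, a large test value to force $b=\zero$ when $a$ or $c$ vanishes, a sign-and-modulus argument reducing the main case to $2|b|\leq|a|+|c|$, and for sufficiency a comparison of the cross term's modulus with the dominant diagonal term. The only cosmetic difference is that the paper normalizes by $\sqrt{a},\sqrt{c}$ where you work directly with moduli (your ``tropical AM--GM'' step), which changes nothing essential.
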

\begin{proof}
  ``Only if''. By taking $x_1 =\unit$ and $x_2=\zero$, we see
  that the condition $a\sgeq \zero$ is necessary. Similarly, $c\sgeq \zero$
  is necessary. If $a=\zero$, then, by taking $x_2=\unit$ and $x_1 = \ominus ub$
  with $u\in \Rmax$ large enough, we get
  $a x_1^2 \oplus bx_1x_2 \oplus cx_2^2
  = \ominus ub^2 \oplus c\sgeq \zero$ which holds for all such $u$
  only if $b=\zero$. Then the condition $b^2\sleq ac$ trivially
  holds. By symmetry, the same is true if $c=\zero$.
  We may now assume that $a,c$ are positive elements
  of $\Smax^\vee$. Then, taking $x_1 =\unit/\sqrt{a}$
  and $x_2 = \epsilon\unit/\sqrt{c}$ with $\epsilon\in \{\unit,\ominus\unit\}$,
  we get $\unit \oplus \epsilon b/(\sqrt{ac})\sgeq \zero$, which
  is possible only if $|b|/\sqrt{ac}\sleq \unit$,
  or equivalently $b^2\sleq ac$. 

  ``If''. The case in which $a=\zero$ or $c=\zero$ is trivial,
  so, we suppose that $a$ and $c$ are both positive, and
 make the change of variable $x_1=y_1/\sqrt{a}$
  and $x_2=y_2/\sqrt{c}$. Then,
  $a x_1^2 \oplus bx_1x_2 \oplus cx_2^2\sgeq \zero$
  holds iff
  \begin{align}
y_1^2 \oplus u y_1y_2 \oplus y_2^2 \sgeq \zero\label{e-chvar}
\end{align}
where $u:= b/(\sqrt{a}\sqrt{c})$ is such that
$|u|\sleq \unit$. Without loss of generality,
we may assume that $y_1^2\sgeq y_2^2$. Then
\eqref{e-chvar} can be rewritten
as $y_1^2 \oplus uy_1y_2\sgeq\zero$, which
holds since $|uy_1y_2|= |u|\sqrt{|y_1|^2 |y_2|^2} \sleq |u||y_1|^2 \sleq y_1^2$.

\end{proof}
The condition $b^2\sleq ac$ in \Cref{sdp-dim2} is the tropical analogue of the nonpositivity of the discriminant.


    
\begin{theorem}\label{th-psdsmax}
  \begin{equation*}
  \begin{split}
  \psd(\Smax)
  =\{ A \in (\Smax^\vee)^{n\times n} \mid & A_{ii}\sgeq \zero, \forall i\in [n],\\
  & A_{ij}=A_{ji},\; A_{ij}^2 \sleq A_{ii}A_{jj}, \forall i,j\in [n], i\neq j\}\enspace .
  \end{split}
  \end{equation*}
In other words, $\psd(\Smax)$ consists of those matrices $A\in (\Smax^\vee)^{n\times n}$ such that $A^-_{ii}=\zero$ for all $i\in [n]$ and $(A_{ij}^\pm )^2 \sleq A_{ii}^+ A_{jj}^+$ for $i,j\in [n]$ with $i\neq j$.
\end{theorem}
\begin{proof}
  ``Only if''. This is obtained from~\Cref{sdp-dim2}, by noting
  that if $A\in (\Smax^\vee)^{n\times n} $, then any $2\times 2$ principal
  submatrix of $A$ must be in $\psdtwo(\Smax)$.

  ``If''. If $A$ satisfies the condition of the theorem, by \Cref{sdp-dim2},
  we have
  $x_{i}A_{ii}x_i \oplus x_j A_{jj}x_j \oplus x_i A_{ij}x_j\sgeq \zero$ for all $i\neq j$.
  By summing these inequalities over all $i\neq j$, we arrive
  at $x^{T}Ax\sgeq \zero$.
\end{proof}

We have the following immediate corollary.

\begin{corollary}\label{c-psdrmax}
  \[
  \psd(\Rmax)
  =\{ A \in \Rmax^{n\times n} \mid  \;A_{ij}=A_{ji},\; A_{ij}^2 \sleq A_{ii}A_{jj}, \forall i,j\in [n], i\neq j\}\enspace .
  \]
\end{corollary}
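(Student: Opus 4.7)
The plan is to deduce this directly from \Cref{th-psdsmax} by specializing to matrices whose entries all lie in $\Rmax \cong \Smax^+$.

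First I would recall from the definition that $\psd(\Rmax) = \psd(\Smax)\cap \Rmax^{n\times n}$, so a matrix $A\in \Rmax^{n\times n}$ belongs to $\psd(\Rmax)$ if and only if it satisfies the three conditions of \Cref{th-psdsmax}: the diagonal entries are positive, the matrix is symmetric, and the off-diagonal entries satisfy $A_{ij}^2\sleq A_{ii}A_{jj}$.

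The key observation is that since $A\in\Rmax^{n\times n}$ means each entry $A_{ii}$ is identified with an element of $\Smax^+$, the diagonal positivity condition $A_{ii}\sgeq \zero$ is automatic for every $i\in[n]$. Consequently this condition can be dropped from the characterization, leaving precisely the symmetry condition $A_{ij}=A_{ji}$ together with the discriminant-type inequality $A_{ij}^2\sleq A_{ii}A_{jj}$ for $i\neq j$, which is the desired description.

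There is essentially no obstacle here; the only minor point to check is that the off-diagonal condition for a pair $(i,j)$ with $i=j$ (if one were to read the theorem that way) reduces to $A_{ii}^2\sleq A_{ii}^2$, which is trivially true, so restricting to $i\neq j$ loses nothing. Thus the claim follows at once from \Cref{th-psdsmax}.
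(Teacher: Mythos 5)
Your proposal is correct and matches the paper's intent: the paper states this corollary as an immediate consequence of \Cref{th-psdsmax} together with the definition $\psd(\Rmax)=\psd(\Smax)\cap\Rmax^{n\times n}$, and your observation that the diagonal condition $A_{ii}\sgeq\zero$ is automatic for entries in $\Rmax\cong\Smax^+$ is exactly the point that makes it immediate.
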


A matrix $X\in\rmax^{n\times n}$ is called {\em tropical completely positive of order $k$} (where $k\geq 1$) if it has entries
\[
X_{ij}=  \langle y^i,y^j\rangle \enspace,\quad i,j\in [n]\enspace,
\]
for some vectors $y^i\in \rmax^k$ (for $i\in [n]$), where $\langle x,y\rangle:=\bigoplus_i x_i \odot y_i$ denotes the canonical scalar product on $\rmax^k$.
The set of all such matrices $X$ for a fixed $k$ is denoted by $\cpk(\rmax)$.

A matrix $X\in\rmax^{n\times n}$ is called {\em tropical completely positive semidefinite of order $k$} (where $k\geq 1$) if it has entries
\[
X_{ij}= \langle Y^i,Y^j\rangle \enspace,\quad i,j\in [n]\enspace,
\]
for some $Y^i\in \psdk(\rmax)$ (for $i\in [n]$), where the Frobenius scalar
product $\langle \cdot, \cdot \rangle$ is understood in the tropical sense,
i.e., $\langle X, Y\rangle :=\bigoplus_{ij}X_{ij}\odot Y_{ij}$. The set of all such matrices $X$ for a fixed $k$ is denoted by $\cpsdk(\rmax)$.

The sets of tropical completely positive matrices and, respectively, tropical completely positive definite matrices are $\cp(\rmax)=\cup_{k\geq 1} \cpk(\rmax)$
and, respectively, $\cpsd(\rmax)=\cup_{k\geq 1}\cpsdk(\rmax)$. These sets are also referred to as {\em tropical completely positive cone} and {\em tropical completely positive semidefinite cone.} In what follows for a square matrix $X$ we denote by $\diag(X)$ the vector of diagonal entries of $X$.

\begin{lemma}\label{lem-diagonal}
  For all $Y,Z\in \psd(\rmax)$,
  \[
  \langle Y, Z \rangle = \langle \diag(Y),\diag(Z)\rangle\enspace .
  \]
\end{lemma}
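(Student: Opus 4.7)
The inequality $\langle \diag(Y),\diag(Z)\rangle \preceq \langle Y,Z\rangle$ is immediate, since $\bigoplus_i Y_{ii}\odot Z_{ii}$ is a sub-expression of $\bigoplus_{i,j} Y_{ij}\odot Z_{ij}$. So the plan reduces to establishing the reverse inequality, i.e., showing that every off-diagonal term $Y_{ij}\odot Z_{ij}$ with $i\neq j$ is bounded above by some diagonal term $Y_{kk}\odot Z_{kk}$.

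For this, I would invoke \Cref{c-psdrmax}: since $Y,Z\in\psd(\rmax)$ and $i\neq j$, we have $Y_{ij}^2 \leq Y_{ii}Y_{jj}$ and $Z_{ij}^2 \leq Z_{ii}Z_{jj}$ (in the tropical sense, this is just $2Y_{ij}\leq Y_{ii}+Y_{jj}$ in ordinary arithmetic). Multiplying and taking tropical square roots (i.e., halving in ordinary notation, which is well-defined in $\rmax$), one obtains
\[
Y_{ij}\odot Z_{ij} \;\leq\; \bigl(Y_{ii}Z_{ii}\bigr)^{1/2}\odot\bigl(Y_{jj}Z_{jj}\bigr)^{1/2}.
\]

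The last step uses the elementary tropical AM-GM inequality $\sqrt{ab}\leq a\oplus b$, valid for $a,b\in\rmax$ (in ordinary notation this is $(a+b)/2\leq \max(a,b)$). Applying it to $a=Y_{ii}Z_{ii}$ and $b=Y_{jj}Z_{jj}$ yields
\[
Y_{ij}\odot Z_{ij}\;\leq\; Y_{ii}Z_{ii}\oplus Y_{jj}Z_{jj}\;\leq\;\bigoplus_{k}Y_{kk}Z_{kk}=\langle \diag(Y),\diag(Z)\rangle.
\]
Taking the $\oplus$ over all pairs $(i,j)$ (including the diagonal, which trivially satisfies the bound) gives $\langle Y,Z\rangle\leq \langle \diag(Y),\diag(Z)\rangle$, completing the proof. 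There is no real obstacle here: the argument is a two-line application of the tropical Cauchy--Schwarz-type inequality encoded in \Cref{c-psdrmax} combined with tropical AM-GM.
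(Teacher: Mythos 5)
Your proof is correct and follows essentially the same route as the paper: bound each off-diagonal term via $Y_{ij}\odot Z_{ij}\sleq \sqrt{Y_{ii}Y_{jj}}\sqrt{Z_{ii}Z_{jj}}=\sqrt{Y_{ii}Z_{ii}}\sqrt{Y_{jj}Z_{jj}}$ using \Cref{c-psdrmax}, then apply the tropical AM--GM inequality to land in $\langle\diag(Y),\diag(Z)\rangle$, the other direction being trivial. No substantive difference from the paper's argument.
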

\begin{proof}
  We have
  $\langle Y, Z \rangle=\bigoplus_{ij}Y_{ij}Z_{ij}
  \sleq \bigoplus_{ij}\sqrt{Y_{ii}Y_{jj}}\sqrt{Z_{ii}Z_{jj}}$.
  Observe that
\[ \sqrt{Y_{ii}Y_{jj}}\sqrt{Z_{ii}Z_{jj}}=
\sqrt{Y_{ii}Z_{ii}} \sqrt{Y_{jj}Z_{jj}}
  \sleq Y_{ii}Z_{ii} \oplus Y_{jj}Z_{jj}
  \sleq \langle \diag(Y),\diag(Z)\rangle\enspace,
  \]
  showing
  that $\langle Y,Z\rangle \sleq \langle \diag(Y),\diag(Z)\rangle$.
  The other inequality is trivial.
\end{proof}
\begin{remark}\label{rk-badpsd}
  If $Y,Z\in \psd(\smax)$, then (in general) we only get\\
  $\langle Y,Z \rangle \balance \langle \diag(Y),\diag(Z)\rangle $ and
  $|\langle Y,Z \rangle | $ $=| \langle \diag(Y),\diag(Z)\rangle |$. 
  \end{remark}
We note that the two classes of matrices $\cpsd$ and $\cp$
coincide in the tropical setting.
\begin{proposition}\label{pr-2}
  $\cpsdk(\rmax)=\cpk(\rmax)$.
\end{proposition}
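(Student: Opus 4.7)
The plan is to prove the two inclusions $\cpk(\rmax)\subseteq\cpsdk(\rmax)$ and $\cpsdk(\rmax)\subseteq\cpk(\rmax)$ separately, using~\Cref{lem-diagonal} for the nontrivial direction.

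First I would handle the easy inclusion $\cpk(\rmax)\subseteq\cpsdk(\rmax)$. If $X\in\cpk(\rmax)$ with $X_{ij}=\langle y^i,y^j\rangle$ for vectors $y^i\in\rmax^k$, I would simply promote each $y^i$ to the diagonal matrix $Y^i=\diag(y^i)\in\rmax^{k\times k}$. By~\Cref{c-psdrmax}, a diagonal matrix with entries in $\rmax$ is tropically PSD, since the off-diagonal condition $A_{ij}^2\preceq A_{ii}A_{jj}$ is trivially satisfied. A direct computation of the tropical Frobenius scalar product gives $\langle Y^i,Y^j\rangle=\bigoplus_{k,l}Y^i_{kl}Y^j_{kl}=\bigoplus_k y^i_k y^j_k=\langle y^i,y^j\rangle=X_{ij}$, exhibiting $X$ as a member of $\cpsdk(\rmax)$.

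For the reverse inclusion $\cpsdk(\rmax)\subseteq\cpk(\rmax)$, I would start with $X\in\cpsdk(\rmax)$, so that $X_{ij}=\langle Y^i,Y^j\rangle$ for some $Y^i\in\psdk(\rmax)$, and apply~\Cref{lem-diagonal} to rewrite each scalar product as $\langle Y^i,Y^j\rangle=\langle \diag(Y^i),\diag(Y^j)\rangle$. Setting $y^i:=\diag(Y^i)\in\rmax^k$ (the diagonal entries lie in $\rmax$ since $Y^i\in\psdk(\rmax)\subseteq\rmax^{k\times k}$), we obtain $X_{ij}=\langle y^i,y^j\rangle$, so $X\in\cpk(\rmax)$.

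There is essentially no obstacle here: the entire content of the statement is already packaged in~\Cref{lem-diagonal}, which reduces the tropical Frobenius scalar product of two PSD matrices to the tropical scalar product of their diagonals. The only point worth emphasizing is that the argument works cleanly over $\rmax$ but would fail over $\smax$, as noted in~\Cref{rk-badpsd}, where one only recovers a balance instead of an equality; hence the statement is restricted to the unsigned setting.
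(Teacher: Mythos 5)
Your proof is correct and follows essentially the same route as the paper: the easy inclusion via promoting each $y^i$ to the diagonal matrix $\diag(y^i)$ (which is tropically PSD), and the reverse inclusion via \Cref{lem-diagonal} applied to $y^i=\diag(Y^i)$. The remark about why this works over $\rmax$ but not over $\smax$ matches \Cref{rk-badpsd} and is a nice touch, but the substance is identical to the paper's argument.
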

\begin{proof}
  If $X\in \cpk(\rmax)$, then $X_{ij}=(y^i)^Ty^j$ for some
  vectors $y^i\in \rmax^k$, and so $X_{ij} =\langle \diag(y^i),\diag(y^j)\rangle
  \in \cpsdk(\rmax)$. Conversely, if $X\in \cpsdk(\rmax)$, we
  have $X_{ij}=\langle Y^i,Y^j \rangle $ for some $Y^i\in \psdk(\rmax)$,
  and so, by \Cref{lem-diagonal}, $X_{ij}=(y^i)^Ty^j$
  where $y^i=\diag(Y^i)$, which implies that $X\in \cpk(\rmax)$. 
\end{proof}
\begin{remark}
  Recall that the cp-rank of a $n\times n$ matrix $\bm{A}$ is the
  smallest integer $k$ such that $\bm{A}\in \cpk(\mathbb{K})$. The
  csdp-rank of $\bm{A}$ is the smallest integer $k$ such that $\bm{A}\in \csdpk(\mathbb{K})$. \Cref{pr-2} entails that the tropical analogues of these two
  notions of rank coincide.
\end{remark}

\begin{theorem} 
\begin{equation*}
\begin{split}
&\sval(\psd(\puiseux)) = \psd(\Smax),\\
&\val (\psd(\puiseux)\cap \nnmat) =
  \val(\psd(\puiseux)) = 
  \psd(\Rmax).
\end{split}
\end{equation*}
\end{theorem}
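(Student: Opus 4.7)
The theorem bundles three equalities; I focus on the first one, $\sval(\psd(\puiseux))=\psd(\Smax)$, from which the other two drop out: (a) $\val(\bm{A})$ encodes the absolute values of the entries of $\sval(\bm{A})$, so $\val(\psd(\puiseux))$ is the set of matrices obtained from members of $\psd(\Smax)$ by forgetting signs, and this coincides with $\psd(\Rmax)$ by~\Cref{c-psdrmax}; (b) the lift I construct for the reverse inclusion happens to have only nonnegative entries whenever the input lies in $\psd(\Rmax)$, delivering $\psd(\Rmax)\subseteq \val(\psd(\puiseux)\cap\nnmat)$.

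The easy direction $\sval(\psd(\puiseux))\subseteq \psd(\Smax)$ uses only $2\times 2$ principal minors. For $\bm{A}\in\psd(\puiseux)$, one has $\bm{A}_{ii}\geq 0$ and $\bm{A}_{ii}\bm{A}_{jj}\geq \bm{A}_{ij}^2\geq 0$ for all $i,j$. Convexity of the nonarchimedean valuation preserves inequalities between nonnegative elements, hence $A_{ii}\sgeq\zero$ and $A_{ii}+A_{jj}\geq 2|A_{ij}|$, i.e., $A_{ij}^2\sleq A_{ii}A_{jj}$, for $A:=\sval(\bm{A})$. By~\Cref{th-psdsmax}, $A\in\psd(\Smax)$.

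For the reverse direction I exhibit an explicit lift. Let $A\in\psd(\Smax)$. If $A_{ii}=\zero$ for some $i$, the condition $A_{ij}^2\sleq A_{ii}A_{jj}=\zero$ forces the entire $i$-th row and column of $A$ to vanish, so those rows and columns can be set to zero in the lift and the problem reduces to the principal submatrix whose diagonal entries are all positive. Writing $\alpha_i:=A_{ii}\in\R$, the candidate lift is
\[
\bm{A}_{ii}:=n\,t^{\alpha_i},\qquad \bm{A}_{ij}:=\sign(A_{ij})\,t^{|A_{ij}|}\ \text{for } i\neq j,
\]
with $\bm{A}_{ij}:=0$ when $A_{ij}=\zero$. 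The signed valuation of $\bm{A}$ is tautologically $A$; the coefficient $n$ on the diagonal is inserted precisely to absorb the off-diagonal contributions in the computation below.

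To establish $\bm{A}\in\psd(\puiseux)$, I rely on the AM--GM inequality $|uv|\leq (u^2+v^2)/2$, which is valid in any ordered field and hence in $\puiseux$. Applied to $u=t^{\alpha_i/2}\bm{y}_i$, $v=t^{\alpha_j/2}\bm{y}_j$, and combined with the tropical PSD bound $|A_{ij}|\leq(\alpha_i+\alpha_j)/2$ (which yields $t^{|A_{ij}|}\leq t^{(\alpha_i+\alpha_j)/2}$ in $\puiseux$), this gives
\[
t^{|A_{ij}|}|\bm{y}_i \bm{y}_j|\;\leq\; \tfrac{1}{2}\bigl(t^{\alpha_i}\bm{y}_i^2 + t^{\alpha_j}\bm{y}_j^2\bigr).
\]
Summing over the $n(n-1)$ ordered pairs $i\neq j$, each term $t^{\alpha_k}\bm{y}_k^2$ appears a total of $n-1$ times on the right, so the off-diagonal contribution to $\bm{y}^T\bm{A}\bm{y}$ is bounded in absolute value by $(n-1)\sum_i t^{\alpha_i}\bm{y}_i^2$. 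Combining with the diagonal term,
\[
\bm{y}^T \bm{A}\, \bm{y} \;\geq\; n\sum_i t^{\alpha_i}\bm{y}_i^2 \;-\; (n-1)\sum_i t^{\alpha_i}\bm{y}_i^2 \;=\; \sum_i t^{\alpha_i}\bm{y}_i^2 \;\geq\; 0,
\]
so $\bm{A}\in\psd(\puiseux)$, completing the first equality. When $A\in\psd(\Rmax)$ all signs in the lift are positive, so $\bm{A}\in\psd(\puiseux)\cap\nnmat$, which yields the remaining equalities. The main obstacle is this AM--GM bookkeeping; with the diagonal coefficient chosen to dominate the $(n-1)$ factor coming from the off-diagonal bound, everything else is routine.
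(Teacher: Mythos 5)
Your proof is correct and follows essentially the same route as the paper: the easy inclusion via $2\times 2$ principal submatrices and convexity of the valuation, and the reverse inclusion via a monomial lift whose diagonal entries are multiplied by a constant ($n$ in your version, $n-1$ in the paper's) so as to force diagonal dominance, with the sign-free variant of the same lift yielding the remaining two equalities. The only real difference is that where the paper simply cites \cite{AGS} to conclude that the lifted matrix is positive semidefinite, you verify it directly by the AM--GM estimate (and you treat the degenerate case $A_{ii}=\zero$ explicitly), which makes the argument self-contained.
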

\begin{proof}
We prove the first of these equalities. The inclusion $\sval(\psd(\puiseux))\subseteq\psd(\Smax)$ is obvious, because
for each $\bm{A}\in\psd(\puiseux)$ we have $\bm{A}_{ii},\bm{A}_{jj}\geq 0$ and $\bm{A}_{ij}^2\leq \bm{A}_{ij}\bm{A}_{ji}$.

Let us prove the other inclusion. Take $A\in\psd(\Smax)$ and also define matrix $\bm{B}$ with entries
\begin{equation*}
\bm{b}_{ij}=
\begin{cases}
(n-1), &\text{if $i=j$},\\
1, &\text{if $i\neq j$}.
\end{cases}
\end{equation*}
Now let $\bm{C}$ be the lift of $A$, with entries 
$\bm{c}_{ij}=\epsilon_{ij} \bm{b}_{ij} t^{|A_{ij}|}$ and $\epsilon_{ij}=\sign(A_{ij})$. 
Then $\bm{C}\in\psd(\puiseux)$ by \cite{AGS}, Lemma 5.4. 

The proof of the other equalities is similar and uses $\bm{c}_{ij}= \bm{b}_{ij} t^{|A_{ij}|}$
\end{proof}
\begin{corollary}[{\cite{Yu15}}]
  The set $\val(\psd(\puiseux)\cap (\puiseux^*)^{n\times n})$ coincides
  with the set of matrices $A=(A_{ij})\in \R^{n\times n}$
    such that $2 A_{ij}\leq A_{ii}+A_{jj}$.
  \end{corollary}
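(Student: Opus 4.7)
The plan is to derive this corollary as a direct repackaging of the immediately preceding theorem together with Corollary \ref{c-psdrmax}, by translating the condition $A_{ij}^2 \sleq A_{ii}A_{jj}$ into ordinary arithmetic once all entries are guaranteed finite.

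First I would match up the two ``nonzero/finite'' restrictions. The intersection with $(\puiseux^*)^{n\times n}$ picks out matrices $\bm{A}\in\psd(\puiseux)$ whose entries all have valuation in $\R$ (rather than $-\infty$), since $\val(\bm{A}_{ij})=-\infty$ iff $\bm{A}_{ij}=0$. Hence $\val(\psd(\puiseux)\cap(\puiseux^*)^{n\times n})\subset \val(\psd(\puiseux))\cap \R^{n\times n}$, and by the preceding theorem this is contained in $\psd(\Rmax)\cap \R^{n\times n}$. Conversely, given any $A\in\psd(\Rmax)\cap \R^{n\times n}$, the explicit lift $\bm{c}_{ij}=b_{ij}\,t^{A_{ij}}$ used in the proof of the preceding theorem (with $b_{ii}=n-1$ and $b_{ij}=1$ for $i\neq j$) already has no zero entries, since $A_{ij}\in\R$ forces $b_{ij}t^{A_{ij}}\neq 0$, so it lies in $\psd(\puiseux)\cap(\puiseux^*)^{n\times n}$ and has signed valuation $A$. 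This gives the equality $\val(\psd(\puiseux)\cap(\puiseux^*)^{n\times n})=\psd(\Rmax)\cap \R^{n\times n}$.

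Second, I would translate the characterization of $\psd(\Rmax)$ from Corollary \ref{c-psdrmax} under the further restriction that all entries lie in $\R$. The conditions there are $A_{ij}=A_{ji}$ and $A_{ij}^2\sleq A_{ii}A_{jj}$ for $i\neq j$. Under the usual dictionary between tropical arithmetic on $\Rmax$ and ordinary arithmetic on $\R\cup\{-\infty\}$, $A_{ij}^2$ is $2A_{ij}$ and $A_{ii}A_{jj}$ is $A_{ii}+A_{jj}$, and $\sleq$ restricted to $\Rmax$ is the ordinary order $\leq$. Thus the tropical inequality is exactly $2A_{ij}\leq A_{ii}+A_{jj}$ in $\R$; the case $i=j$ is the trivial inequality $2A_{ii}\leq 2A_{ii}$; and symmetry $A_{ij}=A_{ji}$, which is implicit in the statement (the image of $\val$ applied to symmetric matrices consists of symmetric matrices), is preserved.

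There is no real obstacle here: once the two preceding statements are in place, the corollary is an immediate translation exercise. The only point requiring a moment of care is verifying that the lift provided in the proof of the preceding theorem maps into $(\puiseux^*)^{n\times n}$, which is automatic for finite tropical entries.
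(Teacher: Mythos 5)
Your derivation is correct and is exactly the intended one: the paper states this corollary without proof, as an immediate consequence of the preceding theorem (whose proof supplies the lift $\bm{c}_{ij}=b_{ij}t^{A_{ij}}$, nonzero whenever $A_{ij}\in\R$) together with \Cref{c-psdrmax}, translated into ordinary arithmetic on finite entries. Your extra care about the $(\puiseux^*)^{n\times n}$ restriction and the implicit symmetry is consistent with the paper's conventions and introduces no gap.
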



Let us recall the following result of Cartwright and Chan.
\begin{theorem}[{\cite[Proposition 2 and Theorem 4]{CartChan12}}]
  \label{th-cc}
We have, for all $k\geq \max(n,\lfloor n^2/4\rfloor)$, 
\begin{equation*}
\cp(\rmax)=\cpk(\rmax) = \{M\in\Rmax^{n\times n}\mid M_{ij}=M_{ji},\  M_{ij}^{2}\leq M_{ii} M_{jj}\ \forall i,j\}\enspace .
\end{equation*}
\end{theorem}
Together with \Cref{c-psdrmax} and~\Cref{pr-2}, this implies the following result:
\begin{corollary}
\label{c:CC}
$\cp(\Rmax) = \psd(\Rmax) = \cpsd(\Rmax).$
\end{corollary}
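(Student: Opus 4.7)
The plan is to deduce the corollary directly by combining three facts already established in the excerpt, so that no new computation is required.

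First, I would observe that \Cref{pr-2} gives $\cpsd_k(\Rmax) = \cp_k(\Rmax)$ for every $k$. Taking the union over $k \geq 1$ immediately yields $\cpsd(\Rmax) = \cp(\Rmax)$, handling the second equality.

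Next, I would match the characterizations of $\psd(\Rmax)$ and $\cp(\Rmax)$. By \Cref{c-psdrmax}, $\psd(\Rmax)$ is the set of symmetric matrices $A \in \Rmax^{n\times n}$ with $A_{ij}^2 \sleq A_{ii} A_{jj}$ for all $i \neq j$. By the Cartwright--Chan result (\Cref{th-cc}), $\cp(\Rmax)$ is the set of symmetric matrices $M \in \Rmax^{n\times n}$ with $M_{ij}^2 \leq M_{ii} M_{jj}$ for all $i,j$. The small point to check is that these two conditions coincide: entries of $\Rmax$ are identified with elements of $\Smax^+$ via the embedding $\tmax \hookrightarrow \Smax^+$, and on $\Smax^+$ the order $\sleq$ restricts to the ordinary tropical order $\leq$ (this is visible directly from the right-hand table in \Cref{table-1}, where for $a,b\in \Smax^+$ one reads $a \sleq b \iff |a|\leq |b|$). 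The diagonal case $i=j$ in the Cartwright--Chan characterization reads $M_{ii}^2 \leq M_{ii}^2$, which is trivial and thus not an extra constraint. Hence the two sets coincide.

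Combining these two observations gives $\cp(\Rmax) = \psd(\Rmax) = \cpsd(\Rmax)$, as claimed. There is no real obstacle here; the corollary is simply an assembly of \Cref{pr-2}, \Cref{c-psdrmax} and \Cref{th-cc}, and the only thing to be careful about is the translation between the signed order $\sleq$ and the ordinary tropical order $\leq$ when all entries are unsigned.
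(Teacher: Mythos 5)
Your proposal is correct and follows essentially the same route as the paper, which states the corollary as an immediate consequence of \Cref{th-cc} together with \Cref{c-psdrmax} and \Cref{pr-2}. Your extra care in checking that $\sleq$ restricts to the usual tropical order on $\Smax^+$ (so the two characterizations literally coincide) and that the diagonal case $i=j$ is vacuous is a fair, if minor, elaboration of what the paper leaves implicit.
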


We also make the following observation:
\begin{proposition}\label{prop-cpval}
$\val(\cp(\K))=\cp(\rmax).$
\end{proposition}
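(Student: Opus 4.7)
The plan is to establish both inclusions directly, exploiting that nonnegativity of entries prevents any cancellation under the nonarchimedean valuation.

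For the inclusion $\val(\cp(\K))\subseteq\cp(\Rmax)$, I would take an arbitrary $\bm X\in\cp(\K)$ and write $\bm X=\bm Y\bm Y^T$ with $\bm Y\in\puiseuxpos^{n\times k}$ for some $k\geq 1$, so that $\bm X_{ij}=\sum_{l=1}^k \bm Y_{il}\bm Y_{jl}$. Since every summand is nonnegative, the ultrametric triangle inequality for $\val$ holds with equality: $\val(\bm X_{ij})=\max_l\bigl(\val(\bm Y_{il})+\val(\bm Y_{jl})\bigr)$. Setting $y^i_l:=\val(\bm Y_{il})\in\Rmax$ and $y^i:=(y^i_l)_l\in\Rmax^k$, this rewrites tropically as $\val(\bm X)_{ij}=\langle y^i,y^j\rangle$, so $\val(\bm X)\in\cpk(\Rmax)\subseteq\cp(\Rmax)$.

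For the reverse inclusion $\cp(\Rmax)\subseteq\val(\cp(\K))$, I would pick $X\in\cp(\Rmax)$ and, using the representation $X_{ij}=\langle y^i,y^j\rangle$ with $y^i\in\Rmax^k$, lift each $y^i_l$ to the Puiseux monomial $\bm Y_{il}:=t^{y^i_l}$ (with the convention $t^{-\infty}=0$), which lies in $\puiseuxpos$. The matrix $\bm X:=\bm Y\bm Y^T$ is then in $\cp(\K)$ by construction. Since $\bm X_{ij}=\sum_l t^{y^i_l+y^j_l}$ is a sum of nonnegative monomials, once again there is no cancellation, and $\val(\bm X_{ij})=\max_l(y^i_l+y^j_l)=\langle y^i,y^j\rangle=X_{ij}$. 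Thus $X=\val(\bm X)\in\val(\cp(\K))$.

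The proof is essentially immediate once one notes the key fact that $\val\bigl(\sum_i \bm z_i\bigr)=\max_i\val(\bm z_i)$ whenever $\bm z_i\geq 0$; no cancellation phenomenon can occur in a completely positive decomposition. There is no real obstacle, and in particular one does not need to invoke \Cref{th-cc} or \Cref{t:polval} here: the statement follows from unwinding the definitions on both sides together with the preservation of maxima under valuation on the nonnegative cone.
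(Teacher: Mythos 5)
Your proof is correct and follows essentially the same route as the paper: a monomial lift $\bm Y_{il}=t^{y^i_l}$ for the inclusion $\cp(\Rmax)\subseteq\val(\cp(\K))$, and the absence of cancellation for nonnegative series (so that $\val(\bm Y\bm Y^T)=\val(\bm Y)\val(\bm Y)^T$ tropically) for the reverse inclusion. The only difference is that you spell out explicitly the key fact $\val\bigl(\sum_i\bm z_i\bigr)=\max_i\val(\bm z_i)$ for $\bm z_i\geq 0$, which the paper invokes implicitly as ``the properties of valuation over $\puiseuxpos$''.
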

\begin{proof}
  Indeed, consider a tropical matrix of the form $X=Y Y^T$.
 Then we can take the monomial lift $\bm{Y}_{ij}=t^{Y_{ij}}$, and then 
 $\val(\bm{Y}\bm{Y}^T)=Y Y^T=X$ by the properties of valuation over $\puiseuxpos$.  This shows $\cp(\Rmax)\subseteq\val(\cp(\puiseux))$.
 Similarly, if $\bm{X}=\bm{Y}\bm{Y}^T\in \cp(\K)$, we deduce
 that $\val(\bm{X})=\val(\bm{Y})(\val(\bm{Y})^T)$, showing
 that $\val(\cp(\puiseux))\subseteq\cp(\Rmax)$. 
  \end{proof}
The following result shows that the tropical analogue of the hierarchy~\eqref{e:primal-incl} collapses.

\begin{theorem}
\label{t:collapse1}
For all  $k\geq \max(n,\lfloor n^2/4\rfloor)$, 
\begin{equation*}
\begin{split}
\val(\cpk(\puiseux))&=  \val(\cp(\puiseux)) 
  = \val( \csdpk(\puiseux))\\
  &= \val ( \csdp(\puiseux)) =  \val(\psd \cap \nnmat) \enspace .
\end{split}
\end{equation*}
\end{theorem}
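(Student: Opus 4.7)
The plan is to reduce to a chain of inclusions and show that the a priori largest of the five sets is contained in the a priori smallest. The inclusions of~\eqref{e:primal-incl}, after applying $\val$, give
\[
\val(\cpk(\puiseux)) \subseteq \val(\cp(\puiseux)),\quad
\val(\cpk(\puiseux)) \subseteq \val(\csdpk(\puiseux)) \subseteq \val(\csdp(\puiseux)) \subseteq \val(\psd(\puiseux) \cap \nnmat),
\]
and also $\val(\cp(\puiseux)) \subseteq \val(\psd(\puiseux) \cap \nnmat)$ since classically $\cp \subseteq \psd \cap \nn$. Hence it suffices to establish the reverse inclusion $\val(\psd(\puiseux) \cap \nnmat) \subseteq \val(\cpk(\puiseux))$.

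The main chain of identifications is then as follows. By the preceding theorem of this section, $\val(\psd(\puiseux) \cap \nnmat) = \psd(\Rmax)$. By \Cref{c:CC}, $\psd(\Rmax) = \cp(\Rmax)$, and by Cartwright--Chan (\Cref{th-cc}), $\cp(\Rmax) = \cpk(\Rmax)$ under the assumption $k \geq \max(n,\lfloor n^2/4 \rfloor)$. Finally, \Cref{prop-cpval} and its proof show that $\cpk(\Rmax) \subseteq \val(\cpk(\puiseux))$: given a tropical factorization $X = YY^T$ with $Y \in \Rmax^{n\times k}$, the monomial lift $\bm{Y}_{ij}:=t^{Y_{ij}}$ (with the convention $t^{-\infty}=0$) belongs to $\puiseuxpos^{n\times k}$ and satisfies $\val(\bm{Y}\bm{Y}^T) = Y Y^T = X$, so that $X$ lies in $\val(\cpk(\puiseux))$. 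Stringing these equalities together gives $\val(\psd(\puiseux) \cap \nnmat) \subseteq \val(\cpk(\puiseux))$, which closes the loop.

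Combining with the initial chain of inclusions shows that all five sets coincide, which is the statement. The bulk of the argument is bookkeeping around prior results; the key conceptual ingredient is \Cref{c:CC} together with the Cartwright--Chan bound on the tropical cp-rank, which collapses the classical distinction between $\cp$, $\cpk$, $\csdp$, $\csdpk$, and $\psd \cap \nn$ once one passes to the tropical world. The only step requiring genuine care is the lifting step: the monomial lift is a classically completely positive factorization with $k$ summands because $Y$ already has exactly $k$ columns, so membership in $\cpk(\puiseux)$ (and not merely in $\cp(\puiseux)$) is preserved, which is what allows us to conclude equality of all five sets with the same $k$.
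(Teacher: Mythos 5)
Your proposal is correct and follows essentially the same route as the paper: a sandwich of the valuations of the cones in the hierarchy~\eqref{e:primal-incl}, closed by identifying the largest set with $\cpk(\Rmax)$ via the Cartwright--Chan bound and lifting back by a monomial lift. The only (welcome) refinement is that you make explicit the point the paper leaves implicit, namely that the monomial lift of a tropical factorization with $k$ columns lands in $\cpk(\puiseux)$, not merely in $\cp(\puiseux)$, so the loop closes with the same $k$.
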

\begin{proof} We use the following sequence of inclusions:
\begin{equation*}
\begin{split}
   \cpk(\puiseux) 
 & \subseteq \csdpk(\puiseux) 
  \subseteq \csdp(\puiseux)\\ 
& \subseteq \psd \cap \nnmat
	\subseteq \{\bm{X}\colon \bm{X}_{ij}\geq 0,\ \bm{X}_{ii}\bm{X}_{jj}\geq \bm{X}_{ij}^2\quad\forall i,j\}.
\end{split}
\end{equation*}
The last of these inclusions holds since the nonnegativity of all $2\times 2$ matrices is a necessary condition for 
a matrix to be positive semidefinite, and the rest of them follow from~\eqref{e:primal-incl}.  
Taking the valuation, we obtain
\begin{align*}
  \val(\cp(\puiseux)) 
 &\subseteq \val(\csdpk(\puiseux))
  \subseteq \val(\csdp(\puiseux))
  \subseteq \val(\psd \cap \nnmat)\\
  &\subseteq
  \val(\{\bm{X}\colon \bm{X}_{ij}\geq 0,\ \bm{X}_{ii}\bm{X}_{jj}\geq \bm{X}_{ij}^2\quad\forall i,j\})\\
  &\subseteq
  \{X\in \rmax^{n\times n}\colon {X}_{ii}{X}_{jj}\geq {X}_{ij}^2\quad\forall i,j\}.
\end{align*}
By \Cref{th-cc}, the set
$\{X\in \rmax^{n\times n}\colon {X}_{ii}{X}_{jj}\geq {X}_{ij}^2\quad\forall i,j\}$
is precisely $\cp(\rmax)$, which coincides
with $\cpk(\rmax)$ for all $k\geq \max(n,\lfloor n^2/4\rfloor )$.
Moreover, by \Cref{prop-cpval}, $\cp(\rmax)$
coincides with $\val(\cp(\puiseux))$, hence, all the inclusions
in the latter chain of inclusions must be equalities. \end{proof}




\begin{remark}
\label{r:DJL}
Note that given a matrix $\bm{X}\in \cp(\K)$, the smallest integer
$k$ such that $\bm{X}\in \cpk(\K)$ is bounded by $\binom{n+1}{2}$
(this result is proved by Hannah and Laffey \cite{HL83}  for matrices over $\mathbb{R}$,
the same result is true over any real closed field, and in particular
over $\mathbb{K}$). 
For the tropical completely positive cone there is a better bound 
$\max(n,\lfloor\frac{n^2}{4}\rfloor)$, proved in Cartwright and Chan~\cite{CartChan12}, Theorem 4.
This bound originated from the work of
 Drew, Johnson and Loewy \cite{DJL94} who conjectured that the CP-rank 
of any completely positive matrix over $\mathbb{R}$ is at most 
$\lfloor\frac{n^2}{4}\rfloor$. 
The bound was then shown to be generally false in the usual mathematics,
in fact, Bomze, Schachinger and Ullrich~\cite{Bomze2015} showed that $\cpk(\K)\neq \cp(\K)$
for $k=n^2/2 +O(n^{3/2})$. 
However, the bound $\lfloor\frac{n^2}{4}\rfloor$ is true in tropical mathematics.
\end{remark}
\begin{remark}\label{rk-cpsd}
  As pointed out in Prakash et al.~\cite{Prakash2017}, there is no general upper bound
  for the cpsd rank of a $n\times n$ cpsd matrix over a real closed field.
  It shown there that this cpsd rank may be as high as $2^{\Omega(\sqrt{n})}$. Hence,
    $\csdpk(\puiseux)\neq \csdp(\puiseux)$ for $k=2^{O(\sqrt{n})}$,
    whereas~\Cref{t:collapse1} shows that the images of the two sets by the nonarchimedean valuation coincide.
  \end{remark}

\subsection{Tropical copositive and co-completely positive semidefinite matrices}


Let us now observe that the {\em tropical copositive cone}, which we
define as the tropical polar $\cp(\rmax)^{\polar}$ of the 
tropical completely positive cone, admits the same description as in the usual mathematics.

\begin{lemma}
We have
\begin{equation*}
\cp(\rmax)^{\polar}=\{A\in(\Smaxv)^{n\times n}\mid x^TAx\succeq \zero\ \text{for all $x\in\Rmax^n$}\}.
\end{equation*}
\end{lemma}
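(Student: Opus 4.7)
The plan is to unfold the definition of the signed polar applied to $\cp(\rmax)\subset \rmax^{n^2}$ (viewing matrices as vectorized objects, with the Frobenius product $\langle A,X\rangle = \bigoplus_{i,j} A_{ij}\odot X_{ij}$). By \Cref{d:polsym}, $A\in\cp^{\polar}(\smax)$ is equivalent to the statement that $\langle A,X\rangle \succeq \zero$ for every $X\in\cp(\rmax)$. The goal is to show this condition is tested by the rank-one generators of $\cp(\rmax)$.

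For the inclusion $\subseteq$, I would specialize $X$ to rank-one completely positive matrices. For any $x\in\rmax^n$, the matrix $X:=xx^T$ with entries $X_{ij}=x_i\odot x_j$ lies in $\cp_1(\rmax)\subseteq \cp(\rmax)$, and
\[
\langle A,xx^T\rangle =\bigoplus_{i,j}A_{ij}\odot x_i\odot x_j = x^T Ax,
\]
so the polar condition forces $x^TAx\succeq\zero$ for all $x\in\rmax^n$, as required.

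For the reverse inclusion $\supseteq$, I would decompose an arbitrary $X\in\cp(\rmax)$ as a tropical sum of rank-one pieces. By definition, $X=YY^T$ tropically for some $Y\in\rmax^{n\times k}$, i.e.\ $X_{ij}=\bigoplus_{l}Y_{il}\odot Y_{jl}$. Writing $y^l$ for the $l$-th column of $Y$, interchanging the order of the tropical sums gives
\[
\langle A,X\rangle=\bigoplus_{i,j}A_{ij}\odot\bigoplus_{l}y^l_i\odot y^l_j=\bigoplus_{l=1}^{k}(y^l)^T A\, y^l.
\]
Assuming $z^TAz\succeq\zero$ for every $z\in\rmax^n$, each summand on the right lies in $\smax^+\cup\smax^\circ=\{u\in\smax\mid u\succeq\zero\}$, and I would conclude by noting that this set is stable under $\oplus$: a short case check (using \Cref{table-1}) shows that the tropical sum of a ``weakly positive'' element and a ``balanced'' element, or of two elements of either type, never lands in $\smax^-\setminus\smax^\circ$. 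Hence $\langle A,X\rangle\succeq \zero$.

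The only delicate point is the last stability observation, but it is an elementary consequence of \Cref{table-1} (or a direct comparison of moduli and signs using the canonical representation $u=u^+\ominus u^-$). Everything else is a reformulation of the definitions together with the fact that the rank-one matrices $xx^T$, $x\in\rmax^n$, generate $\cp(\rmax)$ as a tropical cone.
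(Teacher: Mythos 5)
Your proposal is correct and follows essentially the same route as the paper: both directions are handled by testing against the rank-one matrices $xx^T$ and by decomposing $X=YY^T$ as the tropical sum $\bigoplus_l y^l (y^l)^T$ of rank-one terms. The stability step you flag as delicate is immediate from \Cref{p:trans}\ref{p:trans-4} (taking $a=c=\zero$), so no case check via \Cref{table-1} is needed.
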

\begin{proof}
For any $A\in\cp(\rmax)^{\polar}$ and any $x\in\Rmax^n$ we have $\langle A,xx^T\rangle\succeq\zero$, which is 
the same as $x^TAx\succeq\zero$. 

If $A\in(\Smaxv)^{n\times n}$ satisfies this condition, then $\langle A,XX^T\rangle\succeq\zero$
for any $X\in\Rmax^{n\times k}$ since $XX^T=\bigoplus_{i=1}^k x^i (x^i)^T$ for the columns $x^i$ of $X$. Thus 
$A\in\cp(\rmax)^{\polar}.$
\end{proof}

We give a direct description of the same cone.

\begin{theorem}
\label{t:cocptrop}
We have $A\in\cp(\rmax)^{\polar}$ if and only if 
\begin{eqnarray*}
A_{ii} \succeq \zero \quad \forall i\in[n],\\
(A_{ij}^-)^2\preceq A_{ii}A_{jj}\quad \forall i,j\in[n] \enspace .  
\end{eqnarray*}
\end{theorem}
\begin{proof}
  Arguing as in the proof of \Cref{th-psdsmax}, it suffices
  to establish the equivalence in the $2\times 2$ case,
  i.e., to show that, for all $a,b,c\in \Smax^\vee$,
  \[
  ax_1^2 \oplus bx_1x_2 \oplus cx_2^2 \succeq \zero,\qquad \forall x_1,x_2\succeq \zero
  \]
  holds if and only if
  \[
  a,c\succeq\zero \text{ and } (b^-)^2 \preceq ac  \enspace .
  \]
  The ``Only if'' part is established as in \Cref{sdp-dim2}.
  Arguing as in the proof of this lemma, the only nontrivial case
  is when $a,c$ are positive, and then, instead
  of $\unit \oplus \epsilon b/(\sqrt{a}\sqrt{c}) \succeq \zero$
  for all signs $\epsilon\in \{\unit,\ominus \unit\}$, we only deduce
  that $\unit \oplus b/(\sqrt{a}\sqrt{c}) \succeq \zero$.
This gives $b^-\preceq \sqrt{a}\sqrt{c}$, i.e., $(b^-)^2\preceq ac$.
  The ``If'' part is established as in \Cref{sdp-dim2}.
\end{proof}

We now characterize $\cp(\rmax)^{\polar}$ in terms of the image
of the classical cone of copositive matrices $\cp^{\polar}(\puiseux)$
by the signed valuation.

 \begin{lemma}
\label{l:cocpval}
$\sval(\cp^{\polar}(\puiseux))=(\val(\cp(\puiseux))^{\polar} =\cp(\rmax)^{\polar}$.
\end{lemma}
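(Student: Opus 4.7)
The plan is to establish the two equalities separately. The second one, $(\val(\cp(\puiseux)))^{\polar} = \cp^{\polar}(\smax)$, is essentially a tautology in view of earlier results: by \Cref{prop-cpval} we have $\val(\cp(\puiseux)) = \cp(\rmax)$, and the tropical copositive cone is by definition the (signed) tropical polar of the tropical completely positive cone, so the right-hand side is $\cp(\rmax)^{\polar}$.

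For the first equality $\sval(\cp^{\polar}(\puiseux)) = (\val(\cp(\puiseux)))^{\polar}$, the strategy is to invoke the commutation result \Cref{t:polval} with $\bm{A} := \cp(\puiseux)$, where we identify $n \times n$ matrices with vectors in $\puiseux^{n^2}$ so that the Frobenius scalar product on matrices matches the usual scalar product in $\puiseux^{n^2}$, and $\cp(\puiseux) \subset \puiseux_{\geq 0}^{n^2}$. The only thing to verify is that $\cp(\puiseux)$ is a closed semi-algebraic subset of the nonnegative orthant.

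Semi-algebraicity comes for free from the rank bound recalled in \Cref{r:DJL}: the Hannah--Laffey bound $\binom{n+1}{2}$ holds over any real closed field, so $\cp(\puiseux) = \cp_{k_0}(\puiseux)$ for $k_0 = \binom{n+1}{2}$, which is the image of $\puiseux_{\geq 0}^{n \times k_0}$ under the polynomial map $\bm{Y} \mapsto \bm{Y}\bm{Y}^T$; by the Tarski--Seidenberg theorem over real closed fields, this image is semi-algebraic. Closedness of $\cp_{k_0}(\puiseux)$ in the order topology is the classical content of Hannah--Laffey over $\R$ and transfers to $\puiseux$ via Tarski's transfer principle applied to a first-order formula expressing that the defining projection has closed image (equivalently, that any matrix in the closure is already representable as $\bm{Y}\bm{Y}^T$ with $\bm{Y}$ nonnegative of width $k_0$).

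Once these verifications are in place, \Cref{t:polval} immediately gives $\sval(\bm{A}^{\polar}) = (\val \bm{A})^{\polar}$, i.e., $\sval(\cp^{\polar}(\puiseux)) = (\val(\cp(\puiseux)))^{\polar}$, and concatenation with the second equality yields the three-fold identity. The main technical delicacy in this plan is the transfer of closedness of $\cp_{k_0}$ from $\R$ to $\puiseux$, since the argument over $\R$ typically invokes compactness of certain bounded slices — a notion that behaves differently over a nonarchimedean real closed field — so one must either phrase the closedness claim as a first-order statement susceptible to Tarski transfer, or argue directly that the image of the nonnegative orthant under $\bm{Y} \mapsto \bm{Y}\bm{Y}^T$ is closed in the order topology on $\puiseux^{n\times n}$.
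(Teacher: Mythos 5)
Your proposal follows essentially the same route as the paper: the second equality comes from \Cref{prop-cpval} together with the definition of $\cp^{\polar}(\smax)$ as the signed polar of $\cp(\Rmax)=\val(\cp(\puiseux))$, and the first equality is \Cref{t:polval} applied to $\bm{A}=\cp(\puiseux)$ viewed inside $\puiseux_{\geq 0}^{n^2}$ with the Frobenius pairing. The only difference is that you spell out the hypotheses of \Cref{t:polval} (closedness and semialgebraicity of $\cp(\puiseux)$, via the cp-rank bound and a first-order Tarski transfer of closedness), which the paper treats as immediate; this verification is sound, up to the minor misattribution that closedness of the completely positive cone is the ``content of Hannah--Laffey'' --- their result is the rank bound, and closedness over $\R$ is a separate classical fact that you then correctly combine with the bound to get a transferable first-order statement.
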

\begin{proof}
By \Cref{prop-cpval} we have $\cp(\rmax)=\val(\cp(\puiseux))$, so its tropical polar is $\cp(\rmax)^{\polar}$ is the same as $(\val(\cp(\puiseux))^{\polar}$. The rest of the claim then immediately follows from \Cref{t:polval}.
\end{proof}
\begin{theorem}
\label{t:collapse2} 
For all $k\geq \max(n,\lfloor n^2/4\rfloor)$
\begin{equation*}
\begin{split}
\sval(\psd(\puiseux) + \nnmat)&=\sval( \csdp^\polar(\puiseux))=\sval(\csdpk^\polar(\puiseux))\\
&=\sval(\cpk^{\polar}(\puiseux))
=\sval(\cp^{\polar}(\puiseux))=\cp(\rmax)^{\polar}\enspace .
\end{split}
\end{equation*}
\end{theorem}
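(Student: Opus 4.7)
The plan is to treat the four polar images
$\sval(\cp^\polar(\puiseux))$,
$\sval(\cpk^\polar(\puiseux))$,
$\sval(\csdp^\polar(\puiseux))$,
$\sval(\csdpk^\polar(\puiseux))$
in a uniform way by combining \Cref{t:polval} with \Cref{t:collapse1}, and to handle $\sval(\psd(\puiseux)+\nnmat)$ separately. Each of the four primal cones $\cp, \cpk, \csdp, \csdpk$ is a closed semialgebraic subset of $\puiseux_{\geq 0}^{n\times n}$, so \Cref{t:polval} gives $\sval(C^\polar)=(\val C)^\polar$ for any of them. By \Cref{t:collapse1}, under the hypothesis $k\geq\max(n,\lfloor n^2/4\rfloor)$, the four valuations all coincide with $\cp(\rmax)$, so the four polar images collapse to $\cp(\rmax)^\polar$, which by definition of the signed polar equals $\cp^\polar(\smax)$. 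This extends \Cref{l:cocpval} to the other three cones and accounts for the last four equalities of the chain.

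For $\sval(\psd(\puiseux)+\nnmat)$, the inclusion into $\cp^\polar(\smax)$ is immediate: the classical dual hierarchy~\eqref{e:dual-incl} gives $\psd(\puiseux)+\nnmat\subseteq\csdp^\polar(\puiseux)$, so taking signed valuations and using the previous paragraph yields $\sval(\psd(\puiseux)+\nnmat)\subseteq\sval(\csdp^\polar(\puiseux))=\cp^\polar(\smax)$. The reverse inclusion $\cp^\polar(\smax)\subseteq\sval(\psd(\puiseux)+\nnmat)$ is the substantive point and requires an explicit lift. Given $A\in\cp^\polar(\smax)$, \Cref{t:cocptrop} provides $A_{ii}\succeq\zero$ and $(A_{ij}^-)^2\preceq A_{ii}A_{jj}$ for all $i,j$. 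Setting $v_{ij}:=\max(|A_{ij}^-|,|A_{ji}^-|)$ (with value $-\infty$ if neither entry is strictly negative), the condition translates to $2v_{ij}\leq A_{ii}+A_{jj}$ whenever $v_{ij}$ is finite.

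The lift is $\bm{X}=\bm{Y}+\bm{Z}$, with
\[
\bm{Y}\;=\;\sum_{\{i,j\}:\,v_{ij}\text{ finite}}M^{\{i,j\}},
\]
where $M^{\{i,j\}}$ is supported on rows and columns $i,j$, carrying $t^{A_{ii}}$ and $t^{A_{jj}}$ on the corresponding diagonal entries and $-t^{v_{ij}}$ off-diagonal. The nontrivial $2\times 2$ block of $M^{\{i,j\}}$ has determinant $t^{A_{ii}+A_{jj}}-t^{2v_{ij}}\geq 0$, hence $M^{\{i,j\}}\in\psd(\puiseux)$ and therefore $\bm{Y}\in\psd(\puiseux)$. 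The matrix $\bm{Z}\in\nnmat$ is then defined entrywise to fix the leading terms so that $\sval(\bm{Y}+\bm{Z})=A$: each diagonal position $(i,i)$ is supplied with a monomial $t^{A_{ii}}$, and each off-diagonal position receives either nothing, a positive monomial $t^{|A_{ij}|}$, or a nonnegative combination of $t^{v_{ij}}$ and $t^{|A_{ij}|}$ chosen to cancel an excess negative contribution from $\bm{Y}_{ij}$ while preserving the correct sign and valuation.

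The main obstacle is to make the construction work for non-symmetric $A$: the matrix $\bm{Y}$ must be symmetric (so as to be PSD) yet the lift must produce different signed valuations at $(i,j)$ and $(j,i)$ when $A_{ij}\neq A_{ji}$. This is handled by assigning $\bm{Y}_{ij}=\bm{Y}_{ji}=-t^{v_{ij}}$ with $v_{ij}$ equal to the larger of the two negative magnitudes, and then exploiting the freedom of the possibly non-symmetric $\bm{Z}$ to restore the correct signed valuation on each entry individually through a case analysis on the signs of $A_{ij}$ and $A_{ji}$.
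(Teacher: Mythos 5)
Your overall argument is sound, and it splits naturally into two halves. The first half (the four polar cones) is essentially the paper's own route: take the collapsed chain of valuations from \Cref{t:collapse1}, pass to tropical polars, identify them with $\cp^\polar(\smax)$ via \Cref{l:cocpval}, and commute $\sval$ with the polar via \Cref{t:polval}. The second half is genuinely different. The paper treats the term $\sval(\psd(\puiseux)+\nnmat)$ by applying \Cref{t:polval} to the cone $\psd(\puiseux)\cap\nnmat$ and then invoking, implicitly, the classical conic duality $(\psd\cap\nn)^\polar=\psd+\nn$ over the real closed field $\K$ (which needs closedness of the sum, transferred from $\R$). You instead prove the nontrivial inclusion $\cp^\polar(\smax)\subseteq\sval(\psd(\puiseux)+\nnmat)$ by an explicit lift: the symmetric matrix $\bm{Y}$ built from the $2\times 2$ blocks with diagonal $t^{A_{ii}},t^{A_{jj}}$ and off-diagonal $-t^{v_{ij}}$, $v_{ij}=\max(|A_{ij}^-|,|A_{ji}^-|)$, is indeed in $\psd(\puiseux)$ by \Cref{t:cocptrop}, and the entrywise corrections $\bm{Z}_{ij}\in\{0,\;t^{|A_{ij}|},\;t^{v_{ij}},\;t^{v_{ij}}+t^{|A_{ij}|},\;t^{v_{ij}}-t^{|A_{ij}|}\ (v_{ij}>|A_{ij}|),\dots\}$ are nonnegative elements of $\puiseux$ and restore $\sval(\bm{Y}+\bm{Z})=A$ entry by entry (the asymmetry of $\bm{Z}$ is legitimate since $\nnmat$ is not required to be symmetric). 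I checked the case analysis and it closes. This is more work than the paper's one-line appeal to duality, but it is more self-contained and in the same constructive spirit as the paper's lift proof that $\sval(\psd(\puiseux))=\psd(\Smax)$; what you give up is brevity, what you gain is independence from the closedness of $\psd+\nn$ over $\K$ and its transfer.

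One step as written is not justified: you assert that all four primal cones, in particular $\csdp(\puiseux)=\bigcup_k\csdpk(\puiseux)$, are closed semialgebraic, so that \Cref{t:polval} applies directly. For $\csdp$ this is doubtful: by \Cref{rk-cpsd} the cpsd rank is unbounded, so the union does not stabilize and is not obviously semialgebraic, and closedness of the cpsd cone is a delicate (open, over $\R$) question — this is precisely why its closure is studied in the cited literature. The fix is cheap and does not change your conclusion: from the inclusions $\cpk\subseteq\csdpk\subseteq\csdp\subseteq\psd\cap\nnmat$ you get, after taking polars and signed valuations, that $\sval(\csdp^\polar(\puiseux))$ and $\sval(\csdpk^\polar(\puiseux))$ are sandwiched between $\sval((\psd(\puiseux)\cap\nnmat)^\polar)$ and $\sval(\cpk^\polar(\puiseux))$, and \Cref{t:polval} applies legitimately to the two extreme cones ($\psd\cap\nnmat$ and $\cpk$ are closed and semialgebraic), both ends being $\cp^\polar(\smax)$ by \Cref{t:collapse1} and \Cref{l:cocpval}. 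To be fair, the paper's own proof glosses over the same point, but since you state the closedness and semialgebraicity of $\csdp$ explicitly as a premise, you should either remove it or replace it by this sandwich.
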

\begin{proof}
\Cref{t:collapse1} implies
\begin{equation*}
\begin{split}
(\val(\cpk(\puiseux)))^{\polar}&=  (\val(\cp(\puiseux)))^{\polar} 
  = (\val( \csdpk(\puiseux)))^{\polar}\\ 
  &=(\val ( \csdp(\puiseux)))^{\polar} =  (\val(\psd(\puiseux) \cap \nnmat)^{\polar} \enspace .
  \end{split}
\end{equation*}
for $k\geq \max(n,\lfloor n^2/4\rfloor)$. By \Cref{l:cocpval},  $(\val(\cp(\puiseux)))^{\polar}=\cp(\rmax)^{\polar}$, so all of these polars coincide with $\cp(\rmax)^{\polar}$ and are stable under the operator $\cl\int(\cdot)$. Therefore we can apply \Cref{t:polval}, which transforms this chain of equalities 
to the one that is claimed.


\end{proof}

\begin{remark}
The inclusion 
$\sval(\psd)\subset \sval(\psd+\nnmat)$ is strict,
since for example the matrix
$$
A=
\begin{pmatrix}
2 & 3 \\ 3& 2
\end{pmatrix}
$$
does not arise as the image of an element $\bfA$ of $\psd$ by the signed valuation (because the determinant of any such matrix $\bfA$ should have signed valuation $\ominus 6$, and therefore, this determinant would be negative).
\end{remark}

\section{Optimization in the Signed Tropical World}
\label{s:optimization}
\subsection{Optimization problems over the symmetrized tropical semiring}
We shall consider optimization problem over $\Smax$ of the form:

\begin{equation*}
\begin{split}
\inf\ f(x)\\
\text{s.t.}\ x\in A\subseteq(\Smax^\vee)^n,\\
f(x)\in\Smax^{\vee}, 
\end{split}
\end{equation*}
where the $\inf$ is taken with respect to the $\succeq$ relation.
It is important to require that $f(x)\in\Smax^{\vee}$, for otherwise $\succeq$ is not even an order relation
(the transitivity breaks). The assumption that $A\subset(\Smax^{\vee})^n$ is relevant in applications, for as observed above, signed tropical numbers represent images of elements of ordered valued fields by the signed valuation.

\begin{example}
\label{ex:pol}
{\rm Consider the following problem 
\begin{equation*}
\begin{split}
& \inf 4x^2\oplus 4x\oplus 0,\\
\text{s.t.}\  & x\in \Smax^{\vee}, 4x^2\oplus 4x\oplus 0\in\Smax^{\vee}.
\end{split}
\end{equation*}
For $f(x)=4x^2\oplus 4x\oplus 0$ and $x\in\Smax^{\vee}$ we have, in terms of the usual arithmetics, that
\begin{equation*}
f(x)=
\begin{cases}
4+2x, & \text{if $x\succeq 0$ or $x\prec \ominus 0$,}\\
4+x, &\text{ if $-4\preceq x\preceq 0$,}\\
0, &\text{if $\ominus -4\prec x\preceq -4$.}\\
\ominus(4+x), &\text{if $\ominus 0\prec  x\prec \ominus -4$.}
\end{cases}
\end{equation*}
Note that when $x=\ominus -4$ and $\ominus 0$ then $f(x)$ is balanced,
so these values of $x$ are excluded from optimization. The minimum of $f(x)$ is
$\ominus 4$ and it is obtained as $x$ tends to $\ominus 0$ from the left, see \Cref{f:poly}}.
\end{example}

\begin{figure}[h!]
\centering
\includegraphics[scale=1]{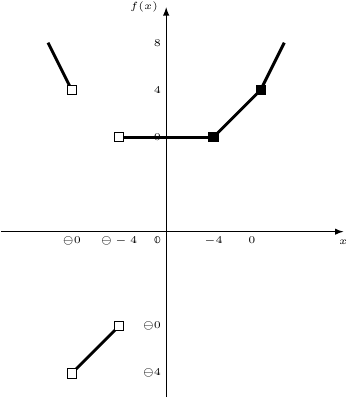}
\caption{Polynomial $f(x)$ over the symmetrized semiring\label{f:poly}}
\end{figure}

\if{
\begin{figure}[h!]
\centering
\begin{tikzpicture}[scale=0.4]\tiny
\tikzstyle{smallpoint} = [rectangle, fill=black, draw, minimum size=1mm]
\tikzstyle{voidsmallpoint}=[rectangle, fill=white, draw, minimum size=1mm]
   \tkzInit[xmax=7,ymax=9,xmin=-7,ymin=-7]
   \tkzSetUpAxis[ticka=0pt, tickb=0pt]    
 \tkzDrawXY[/tkzdrawX/label=$x$,/tkzdrawY/label=$f(x)$]

\tkzDefPoint(-5,8){leftpoint}
\tkzDefPoint(-4,6){corner1up}  
\tkzDefPoint(-4,-6){corner1down} 
\tkzDefPoint(-2,-4){corner2down}
\tkzDefPoint(-2,4){corner2up}
\tkzDefPoint(2,4){corner3}
\tkzDefPoint(4,6){corner4}
\tkzDefPoint(5,8){rightpoint}

\draw[line width=0.5mm] (leftpoint) -- (corner1up);
\draw[line width=0.5mm] (corner1down) -- (corner2down);
\draw[line width=0.5mm] (corner2up) -- (corner3);
\draw[line width=0.5mm] (corner3) -- (corner4);
\draw[line width=0.5mm] (corner4) -- (rightpoint);

\node at (-4,6)[voidsmallpoint] (c1) {};  
\node at (-4,-6)[voidsmallpoint] (c2){};
\node at (-2, -4) [voidsmallpoint] (c3){};
\node at (-2, 4) [voidsmallpoint] (c4){};
\node at (2, 4) [smallpoint] (c5){};
\node at (4, 6) [smallpoint] (c6){};

\coordinate [label=below left: $\zero$] (xx1) at (0,0);
\coordinate [label=below: $-4$] (xx2) at (2,0);
\coordinate [label=below left: $0$] (xx3) at (4,0);
\coordinate [label=below: $\ominus -4$] (xx2) at (-2,0);
\coordinate [label=below: $\ominus 0$] (xx3) at (-4,0);

\coordinate [label=left: $8$] (yy1) at (0,8);
\coordinate [label=left: $4$] (yy2) at (0,6);
\coordinate [label=left: $0 $] (yy3) at (0,4);
\coordinate [label=left: $\ominus 0$] (yy4) at (0,-4);
\coordinate [label=left: $\ominus 4$] (yy5) at (0,-6);
\end{tikzpicture}
\caption{Polynomial $f(x)$ over the symmetrized semiring\label{f:poly}}
\end{figure}
}\fi

Let $f =\bigoplus_{k=0}^n a_k x^k \in\Smax^\vee[x]$ be a formal univariate polynomial of degree $n$ (i.e., $a_n\neq \zero$), with signed coefficients.
We denote by $x\mapsto f(x)$ the associated polynomial function from
$\smax^\vee$ to $\smax$.
The {\em roots} of $f$ are defined as the points $x\in \Smax^\vee$ such that
$f(x) \balance \zero$.
  The moduli of the roots of $f$ are necessarily
 points of nondifferentiability of the map $x\mapsto |f|(x)$; however, not all nondifferentiability points yield roots (Baccelli et al.~\cite{BCOQ}).
  In fact, the notion of multiplicity of root in the setting of semirings or hyperfields with signs is a delicate one, see Baker and Lorscheid~\cite{baker2018descartes}, Gunn~\cite{gunn}.

  We consider the optimization problem
  \begin{align}\label{e-univariate}
  \inf f(x) , \qquad x\in\Smax^\vee, f(x) \in \Smax^\vee
  \end{align}
  and let us define the formal polynomial $|f|\in \Rmax[x]$ by
$|f|(x) = \bigoplus_{k=0}^n |a_k | x^k.$
 Recall that $\smax^\vee$ is equipped with the order topology.
  For all $\alpha\in \smax^\vee$, we denote by
  $f(\alpha^-)=\lim_{\beta \to \alpha, \beta\prec\alpha} f(\beta)$
  and similarly $f(\alpha^+)=\lim_{\beta \to \alpha, \beta\succ\alpha} f(\beta)$
  The following proposition shows that optimization
  problems for univariate tropical polynomials admit an explicit solution.
\begin{proposition}
\label{p:univariate}
The following claims hold for~\eqref{e-univariate}:
  \begin{enumerate}[label={\rm (\roman*)}]
  \item If the degree $n$ is odd, the objective function of the optimization problem~\eqref{e-univariate} is unbounded from below.
  \item The same is true if the degree $n$ is even, and if $a_n$ is negative.
  \item If the degree $n$ is even, if $a_n$ is positive, if $f$ has
    at least one root, then the value of the infimum is
    $\ominus |f|(|\alpha|)$, where $\alpha$ is any root of maximal modulus. Moreover,
    $\ominus |f|(|\alpha|) = f(\alpha^-)$ if $\alpha$ is positive, and 
    $\ominus |f|(|\alpha|) = f(\alpha^+)$ if $\alpha$ is negative.
    \item If the degree of $n$ is even, if $a_n$ is positive,
      and if $f$ has no root, then the value of the infimum is $f(\zero)$.
    \end{enumerate}
\end{proposition}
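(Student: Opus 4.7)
The plan hinges on two elementary facts. First, since $|\cdot|:\smax\to\rmax$ is a semiring morphism, we have $|f(x)|=|f|(|x|)$ for every $x\in\smax^\vee$. Second, $|f|(r)=\max_k(|a_k|+kr)$ is nondecreasing in $r\in\rmax$, since every exponent $k\geq 0$.

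For (i) and (ii) I would exhibit a sequence $(x_m)$ with $f(x_m)\to\ominus\infty$. Taking $|x_m|=m\to\infty$, the term $a_n x_m^n$ eventually dominates in modulus, so $|f(x_m)|=|a_n|+nm$, while its sign is $\sign(a_n)$ if $n$ is even and $\sign(a_n)\sign(x_m)$ if $n$ is odd. In case~(ii) this sign is negative irrespective of $\sign(x_m)$; in case~(i) one picks $x_m\prec\zero$ when $a_n\succ\zero$, and $x_m\succ\zero$ otherwise.

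For case (iii), let $\alpha$ be a root of maximum modulus. I first show $f(x)\succeq\ominus|f|(|\alpha|)$ for every feasible $x$. If $f(x)\succ\zero$ there is nothing to prove. If $f(x)\prec\zero$, then $f(x)=\ominus|f|(|x|)$, and I must show $|f|(|x|)\leq|f|(|\alpha|)$. For $|x|\leq|\alpha|$ this is monotonicity of $|f|$; for $|x|>|\alpha|$ the alternative $f(x)\prec\zero$ will be ruled out by the following dominant-term principle: on each open interval of $|x|$ disjoint from the roots, the sign of $f$ is the sign of the monomial $a_kx^k$ whose modulus achieves $|f|(|x|)$, since sign flips between adjacent dominant monomials are precisely the roots. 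As $a_nx^n$ is positive, dominates for large $|x|$, and no root exceeds $|\alpha|$ in modulus, the sign of $f$ remains positive throughout $\{|x|>|\alpha|\}$. To show the bound is sharp, the balance at $\alpha$ expresses two monomials $a_k\alpha^k$, $a_l\alpha^l$ with $k<l$, of equal modulus $|f|(|\alpha|)$ and opposite sign; positivity of $f$ just above $|\alpha|$ forces the $l$-term to be positive and the $k$-term negative. Approaching $\alpha$ from the inside, i.e.\ $x\to\alpha^-$ if $\alpha\succ\zero$ and $x\to\alpha^+$ if $\alpha\prec\zero$, the lower-exponent monomial becomes dominant, hence $f(x)$ is negative with $|f(x)|\to|f|(|\alpha|)$, which yields the announced values of $f(\alpha^-)$ and $f(\alpha^+)$.

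For case (iv), the same dominant-term principle, combined with the absence of roots, forces the sign of $f$ to be constant on all of $\smax^\vee$, and the leading term fixes that sign to be positive. Since on $\smax^+$ the order $\succeq$ agrees with $\geq$ on moduli, $\inf f$ is attained where $|f|$ is smallest; by monotonicity the minimum is $|f|(\zero)=|a_0|$, attained at $x=\zero$, giving $\inf f=f(\zero)=a_0$. The chief technical obstacle, shared by (iii) and (iv), is to make the dominant-term principle rigorous: away from roots, a univariate polynomial in $\smax^\vee[x]$ has a sign equal to that of its unique maximizing monomial. I would establish this by a pairwise balance computation along the Newton broken line of $|f|$, using that a corner of $|f|$ produces a root exactly when the two equal dominant monomials have opposite sign.
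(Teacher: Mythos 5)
Your overall strategy is the same as the paper's: use that $|f(x)|=|f|(|x|)$ and that $|f|$ is nondecreasing in the modulus, that the leading monomial dominates for $|x|$ large (which settles (i) and (ii) exactly as in the paper's proof), and, for (iii), combine the lower bound $f(x)\succeq\ominus|f|(|\alpha|)$ with an approach to the maximal-modulus root from the inside. Your treatment of (iv) and of the lower-bound half of (iii) (sign constancy of $f$ on $\{|x|>|\alpha|\}$, monotonicity of $|f|$ below $|\alpha|$) is correct and in fact spelled out more carefully than in the paper.

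The genuine gap is in the sharpness step of (iii). You assert that "the balance at $\alpha$ expresses two monomials $a_k\alpha^k$, $a_l\alpha^l$ with $k<l$, of equal modulus and opposite sign," and that just inside $|\alpha|$ the lower-exponent one dominates, so $f$ becomes negative with modulus tending to $|f|(|\alpha|)$. This is only valid when exactly two monomials attain $|f|(|\alpha|)$. If three or more monomials tie at the corner with mixed signs, the monomial that strictly dominates just inside $|\alpha|$ may be a positive one, and then $f$ does not change sign at $\alpha$ and the value $\ominus|f|(|\alpha|)$ is never approached. Concretely, take $f(x)=x^2\ominus(0\odot x)\oplus 0$: its unique root is $\alpha=0\in\smax^+$ (all three monomials tie there, with signs $+,-,+$), yet $f(x)$ is positive at every signed point where it is signed, $f(\alpha^-)=0$, and the infimum in \eqref{e-univariate} is $0$, not $\ominus 0=\ominus|f|(|\alpha|)$. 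So your argument cannot be completed as written without an extra hypothesis (e.g.\ that the tie at the maximal root is between exactly two monomials, or a genericity assumption forcing a sign change at $\alpha$). For what it is worth, this is precisely the point that the paper's own proof passes over: it simply asserts that "$f(x)$ changes sign at the point $x=\alpha$" without justification, and the same example shows that assertion (and the formula in (iii) as literally stated) can fail; your proposal differs only in that you attempt to justify the sign change, but the justification covers only the two-term tie case.
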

\begin{proof} 
We first observe that 1) the map $y\mapsto |f(y)|$ from $\rmax$ to $\smax^\vee$ is increasing, 2) $f(x)=a_nx^n$ when 
\begin{equation}
\label{e:|x|}
|x|\geq \bigoplus_{k=1}^n (|a_{n-k}|(|a_n|)^{-1})^{1/k}\enspace .
\end{equation}
We next prove all claims of the proposition.

(i),(ii): We observed that $f(x)$ equals $a_nx^n$ for positive or negative $x\in\smax^{\chech}$ satisfying \eqref{e:|x|}.
Let $n$ be odd.  If $a_n\succ\zero$ then $f(x)=a_nx^n$ is unbounded from below as $x\to\bot$, and if $a_n\prec\zero$ then the value $f(x)=a_nx^n$ is unbounded from below as $x\to\top$. If $n$ is even and $a_n\prec\zero$, then $f(x)=a_nx^n$ is unbounded from below as $x\to\bot$ and $x\to\top$.
  
 (iii):  In this case $f(x)$, being equal to $a_nx^n$, stays positive for all positive or negative $x\in\smax^{\chech}$ satisfying \eqref{e:|x|}. 
 If $\alpha$ is the root of $f$ over $\Smax$ that has a maximal modulus and is positive, then $f(x)$ remains positive for all $x\succ \alpha$, and $f(x)$ changes sign
  at point $x=\alpha$. Considering any increasing sequence $\alpha_k\in \Smax^\vee$ converging
  to $\alpha$, we obtain that $\inf_k f(\alpha_k)= \ominus |f|(|\alpha|)$,
  and using the monotonicity property of $y\mapsto |f(y)|$, we get
  that the infimum of $f$ over $\smax^\vee$ is equal to $\ominus |f|(|\alpha|)$.\\
If $\alpha$ is the root of $f$ over $\Smax$ that has a maximal modulus and is negative, then $f(x)$ remains positive for all $x\prec \alpha$, and $f(x)$ changes sign
  at point $x=\alpha$. Considering any decreasing sequence $\alpha_k\in \Smax^\vee$ converging
  to $\alpha$, we obtain that $\inf_k f(\alpha_k)= \ominus |f|(|\alpha|)$, and hence 
  the infimum of $f$ over $\smax^\vee$ is equal to $\ominus |f|(|\alpha|)$  (using the monotonicity property of $y\mapsto |f(y)|$).
 
 (iv): When $f$ has no root, it follows from the monotonicity of $y\mapsto |f(y)|$  
 that the infimum is equal to $f(\zero)$. 
\end{proof}





\subsection{Tropical quadratic programming}
We now consider tropical quadratic programming problems. Given
quadratic forms $f_i(x) = x^T A_i x \oplus b_i^T x \oplus c_i$ where
$A_i\in (\smax^\vee)^{n\times n}$, $b_i\in (\smax^\vee)^{n}$, and
$c_i\in \smax^\vee$, for $i\in \{0,\dots,m\}$,
\begin{align}
  \inf_{x\in (\smax^\vee)^n} f_0(x) ; \qquad f_i(x)\preceq \zero, \qquad i\in [m]
  \enspace .
  \label{def-qopt}
  \end{align}
Recall that in our definition of an optimization problem, the infimum
is taken over all $x$ such that $f_0(x)$ is signed. 
The tropical quadratic feasibility problem consists
in checking whether the inequalities  $f_i(x)\preceq \zero$ for all $i\in [m]$
hold for some $x\in (\smax^\vee)^n$. We note that the two inequalities
$f_i(x)\preceq \zero$ and $f_i(x)\succeq \zero$ are equivalent
to $f_i(x)\balance \zero$, so ``equalities'' involving
quadratic functions can be coded in this way.

The following observation allows us to formulate tropical copositivity
in terms of quadratic optimization.
\begin{proposition}
 A matrix $A\in(\smax^\vee)^{n\times n}$ is copositive iff
the value of the  quadratic optimization problem
$\inf_{x\in (\smax^\vee)^n}\{ x^T Ax,\  x\succeq \zero\}$ 
is $\zero$.
\end{proposition}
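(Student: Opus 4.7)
The proposition reads as an ``iff,'' which I will prove by two short arguments, both resting on the characterization
\[
\cp^\polar(\smax) = \{A \in (\smax^\vee)^{n\times n} \mid x^T A x \succeq \zero \text{ for all } x\in \Rmax^n\}
\]
established in the preceding lemma. A preliminary observation is that the feasibility set of the optimization problem is exactly $\{x\in \Rmax^n : x^T A x \in \smax^\vee\}$: indeed, for $x \in (\smax^\vee)^n$, the constraint ``$x\succeq \zero$ componentwise'' forces every component to lie in $\smax^+ = \Rmax$, since on $\smax^\vee$ the relation $a\succeq \zero$ is equivalent to $a\in \smax^+$.

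For the forward direction, assume $A$ is copositive. Then $x^T A x \succeq \zero$ holds for every $x\in \Rmax^n$. Restricting to the feasible set (those $x$ for which $x^T A x$ is signed), the attained values all lie in $\smax^\vee \cap \{a \mid a\succeq \zero\} = \smax^+$, i.e.\ are positive. Taking $x$ equal to the zero vector gives $x^T A x = \zero$, which is signed, feasible, and equal to $\zero$. Hence $\zero$ is simultaneously attained and a lower bound for the feasible values, so the infimum equals $\zero$.

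For the converse, I argue by contrapositive. Assume $A$ is not copositive, so there exists $x\in \Rmax^n$ with $x^T A x \not\succeq \zero$. The key step is the observation that for $a \in \smax$, failure of $a\succeq \zero$ is equivalent to $a \in \smax^-\setminus\{\zero\}$: from the definition, $a\succeq \zero$ reduces to $a^+ \geq a^-$; this holds whenever $a \in \smax^+$ (by definition) and also whenever $a \balance \zero$ (since then $a^+ = a^-$), and it fails exactly when $a$ is strictly negative in the symmetrized sense. This can also be read off \Cref{table-1} by taking $b=\zero$. Consequently $x^T A x$ is a signed strictly negative value; it is therefore a feasible value of the optimization problem satisfying $x^T A x \prec \zero$, so $\inf \preceq x^T A x \prec \zero$, and the infimum cannot be $\zero$.

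The only subtle point is the dichotomy in the converse direction, namely ruling out the possibility that $x^T A x$ is balanced (and thus infeasible in the optimization problem), which would otherwise block the argument. As explained above, balanced elements always satisfy $a\succeq \zero$, so the witness $x$ for noncopositivity necessarily produces a signed value; no serious obstacle arises.
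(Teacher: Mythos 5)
Your proof is correct and follows essentially the same route as the paper: the zero vector gives a feasible value $\zero$ which copositivity makes a lower bound, and a witness of non-copositivity yields a signed value $\prec\zero$, hence an infimum different from $\zero$. The only (harmless) differences are that the paper additionally scales the witness by $u\succ\zero$ to conclude the infimum is actually $\bot$, whereas you stop at $\inf\prec\zero$ (which suffices for the stated equivalence), and that you spell out the dichotomy showing the witness value cannot be balanced, a point the paper asserts without detail.
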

\begin{proof}
Indeed, if $A$ is copositive, we have by definition
$x^T Ax \succeq \zero$ for all $x\succeq \zero$. Moreover,
by taking $z=\zero$, we get $x^\top Ax=\zero\in \smax^\vee$,
showing that the above infimum is equal to $\zero$.
Conversely, suppose that $A$ is not copositive. Then,
there exists a vector $x\in (\smax^\vee)^n$
such that $x^TAx\prec \zero$ and $x^TAx$ is signed.
Considering $ux$ instead of $x$, with $u$ positive,
it follows that the value of the above infimum
is bounded above by $u^2(x^TAx)$, for all such $u$, and so,
the above infimum is equal to $\bot$.
\end{proof}
Testing whether a matrix is copositive is a co-NP complete problem,
see Dickinson and Gijben~\cite{Dickinson2014}. In contrast, it follows from \Cref{t:cocptrop} that
checking whether a tropical matrix is copositive can be done in polynomial time.
I.e., an important subclass of quadratic optimization problems which is
NP-hard in the classical case becomes polynomial in the tropical
case.  However, the following result shows that, in general, tropical
quadratic optimization remains NP-hard.


\begin{theorem}\label{th-nphard}
  The tropical quadratic feasibility problem is NP-hard.
\end{theorem}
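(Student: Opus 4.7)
The plan is to give a polynomial-time reduction from 3-SAT, which is NP-complete. Starting from a 3-SAT instance with Boolean variables $x_1,\dots,x_n$ and clauses $C_1,\dots,C_m$, I will construct a tropical quadratic feasibility system in variables $\epsilon_1,\dots,\epsilon_n\in\Smaxv$ that is feasible exactly when the formula is satisfiable.

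The first step is to encode Boolean values by forcing $\epsilon_i\in\{\unit,\ominus\unit\}$, using, for each $i$, the pair of quadratic inequalities $\epsilon_i^2\ominus\unit\preceq\zero$ and $\unit\ominus\epsilon_i^2\preceq\zero$, which together express $\epsilon_i^2\balance\unit$. For any $\epsilon_i\in\Smaxv$, the square $\epsilon_i^2$ is a positive element with tropical absolute value $2|\epsilon_i|$ in additive notation, so this pair is equivalent to $|\epsilon_i|=\unit$; combined with signedness this forces $\epsilon_i\in\{\unit,\ominus\unit\}$.

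The second step is to encode each clause $C=(\ell_1\vee\ell_2\vee\ell_3)$ by a single linear (hence quadratic) inequality. For the literal $\ell_j$ on variable $x_{i_j}$, define $y_j:=\epsilon_{i_j}$ if $\ell_j$ is positive and $y_j:=\ominus\epsilon_{i_j}$ if $\ell_j$ is negated, so that $y_j=\unit$ exactly when $\ell_j$ is true. Impose the constraint
\[
(\ominus y_1)\oplus(\ominus y_2)\oplus(\ominus y_3)\preceq\zero.
\]
When all three literals are false the $y_j$ are all $\ominus\unit$, the left-hand side equals $\unit$, and $\unit\not\preceq\zero$ by~\Cref{table-1}, so the constraint fails. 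Otherwise the left-hand side is either $\ominus\unit$ (when all three literals are true) or a balanced element of $\smax^\circ$ of the form $\unit\ominus\unit$ (mixed case), and in both cases it is $\preceq\zero$.

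The overall system consists of $2n+m$ inequalities of the form $f_i(\epsilon)\preceq\zero$ with each $f_i$ of degree at most two, is produced in polynomial time, and is feasible in $(\Smaxv)^n$ if and only if the 3-SAT formula is satisfiable, establishing NP-hardness. The main delicate point is the verification of the Boolean encoding: one must rule out any spurious solution in $\Smaxv$, which reduces to the elementary observation that squaring doubles the tropical absolute value, so $|\epsilon_i|\neq\unit$ forces the positive element $\epsilon_i^2$ to have absolute value different from $\unit$ and hence not to balance $\unit$.
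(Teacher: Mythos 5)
Your proof is correct and follows essentially the same route as the paper's: a polynomial-time reduction from 3-SAT in which quadratic balance constraints (split into pairs of $\preceq\zero$ inequalities) force each variable into a two-element Boolean domain, and linear constraints encode the clauses. The only differences are in the gadgets: the paper places the Booleans at $\{0,1\}\subset\R\subset\smax$ via $s^2\ominus 1s\oplus 1\balance\zero$ and introduces auxiliary negation variables linked by $x_iy_i\balance 1$, with clauses encoded as balances, whereas you place them at $\{\unit,\ominus\unit\}$ via $\epsilon_i^2\balance\unit$ and get negation for free by multiplying by $\ominus\unit$, so each clause needs only a single one-sided inequality; both encodings are valid.
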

\begin{proof}
  Let $0$ and $1$ denote the usual zero and unit elements
  of $\mathbb{R}\subset \rmax\subset \smax$, not to be confused
  with the $\zero$ and $\unit$ elements of $\smax$. The relations
  $s\in \smax^\vee$ and $s^2\ominus 1 s\oplus 1\balance \zero$
  are equivalent to $s\in \{0,1\}$ (to see this, note that
  $s^2\ominus 1 s\oplus 1=  (s\ominus 0)(s\ominus 1)$.
  Then,  if $x_i,y_i\in\{0,1\}$, the relation $x_iy_i \balance 1$ for $i\in [m]$  
  is equivalent to 
  ``$x_i=0$ iff $y_i=1$''. In that way, the solutions of these relations
  encode $m$ Boolean variables together with the negated variables.
  Consider now an instance of 3-SAT, in which the Boolean variables
  are $x_1,\dots,x_m$. A clause $C_i$ can be written
  as $E \vee F \vee G$ where each term $E,F,G$ is of the form
  $x_j$ or $\neg x_j$ for some $j$. Let us interpret
  such a clause in $\smax$, replacing $\neg x_j$ by $y_j$,
  and replacing $\vee$ by $\oplus$.
  E.g., the validity of the clause $x_3\vee \neg x_7\vee x_9$
  is coded by the linear constraint $ x_3 \oplus y_7 \oplus x_9 \balance 1$. Recall also that a relation of the form $u\balance v$ is equivalent
  to the conjunction of the two inequalities $u\preceq v$ and $u\succeq v$.
In that way, we express an instance of 3-SAT as a collection
  of inequalities in the variables $x_1,\dots,x_m,y_m,\dots,y_m$.
  It follows that the tropical quadratic feasibility problem is NP-hard.
\end{proof}
 In view of the definition of signed tropically positive semidefinite matrices, it is natural to introduce also the class 
 of tropical positive definite matrices: 
\begin{equation*}
  \begin{split}
  \pd(\Smax)
  =&\{ A \in (\Smax^\vee)^{n\times n} \mid A_{ii}\succ \zero, \forall i\in [n],\\
   &A_{ij}=A_{ji},\; A_{ij}^2 \prec A_{ii}A_{jj}, \forall i,j\in [n], i\neq j\}.
   \end{split}
 \end{equation*}
Let us now consider the following optimization problem with positive definite matrix $A$:
\begin{equation}
\label{e:quadropt}
\inf_{x\in (\Smax^\vee)^n,\;  x^TAx \oplus b^Tx\in \Smax^\vee} x^TAx \oplus b^Tx\in \Smax^\vee \enspace ,
\end{equation}
We will also consider the following optimization problem:
\begin{equation}
\label{e:quadroptlift}
\min_{\bfx\in\mathbb{K}^n} \bfx^T\bfA\bfx +\bfb^T\bfx \enspace,
\end{equation}
where $\bfA=\bfA^T \in \mathbb{K}^{n\times n}$ and $\bfb\in\mathbb{K}^n$.

We now define the {\em tropical comatrix}
$\com A$ to be the matrix whose $(i,j)$ entry equals $(\ominus \unit)^{i+j}
\det A(i,j)$ where $A(i,j)$ is obtained by deleting row $i$ and column $j$ of
$A$, and $\det $ denotes the determinant of a matrix with entries
in $\smax$, which is defined by the usual formula, see e.g.~\cite{BCOQ,guterman} for background.
If $A=\sval(\bfA)$ with $\bfA\in \K^{n\times n}$, and $A$ is
generic, then,  for all $i,j\in [n]$, there is only one term of maximal
modulus in the expansion of $\det A(i,j)$ as a signed tropical sum of weights of $(n-1)!$ permutations, and it follows that $\com A =\sval \com\bfA$ where $\com\bfA$ denotes the usual
comatrix of $\bfA$. 

\begin{proposition}
\label{p:quadropt}
Suppose that $A\in (\Smax^\vee)^{n\times n}$ is positive definite, let $b\in (\Smax^\vee)^n$, and
let $\bfA=\bfA^T\in \K^{n\times n}$ and $\bfb\in\K^n$ be such that
$A=\sval \bfA$ and $b=\sval \bfb$. Then the following claims hold for~\eqref{e:quadropt} and \eqref{e:quadroptlift}:
\begin{enumerate}[label={\rm (\roman*)}]
 \item The optimal value of~\eqref{e:quadropt} is equal to $\ominus b^T \diag(A)^{-1} b$ and is obtained by 
considering a minimizing sequence converging
to $\bar{x}:= \ominus \operatorname{diag}(A)^{-1} b$.
\item The optimal value of~\eqref{e:quadropt} coincides with the image by the signed nonarchimedean valuation
  of the optimal value of~\eqref{e:quadroptlift}, equal to $-\bfb^T\bfA^{-1}\bfb/4$.
\item If the entries of $A$ and $b$ are generic, then, the image by the signed nonarchimedean valuation of the unique optimal solution $\bfx^*=-\bfA^{-1}\bfb/2$ of~\eqref{e:quadroptlift}
  is given by $ x^*= \ominus (\det A)^{-1} (\com A)^T b$.
\end{enumerate}
\end{proposition}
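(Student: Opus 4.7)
For claim (i), the plan is to first reduce the quadratic form to its diagonal using the strict tropical positive definiteness. Applying the tropical AM--GM $\sqrt{uv} \preceq u \oplus v$ with $u = A_{ii}|x_i|^2$ and $v = A_{jj}|x_j|^2$, together with $|A_{ij}|^2 \prec A_{ii}A_{jj}$, yields $|A_{ij} x_i x_j| \prec A_{ii}|x_i|^2 \oplus A_{jj}|x_j|^2$ for $i \neq j$; since every diagonal $A_{ii}x_i^2$ is positive in $\smax$, the off-diagonal terms are absorbed and $x^T A x = \bigoplus_i A_{ii} x_i^2$ as elements of $\smax$. The problem reduces to minimizing $\bigoplus_i A_{ii}x_i^2 \oplus b^T x$ over signed $x$ subject to the objective lying in $\smax^\vee$. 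Since the quadratic part is positive, the objective can be negative only when $b^T x$ is negative and dominates; this forces the sign of each relevant $x_i$ to be opposite to that of $b_i$, so $b^T x = \ominus \bigoplus_i |b_i||x_i|$, and a componentwise analysis of $\bigoplus_i|b_i||x_i| \succ \bigoplus_i A_{ii}|x_i|^2$ shows that the index $i^*$ achieving the maximum of $|b_i||x_i|$ must satisfy $|x_{i^*}| \prec |b_{i^*}|/A_{i^*i^*}$, hence $|b_{i^*}||x_{i^*}| \prec |b_{i^*}|^2/A_{i^*i^*}$. Taking suprema, the infimum equals $\ominus b^T\diag(A)^{-1}b$ and is attained in the limit along any sequence with $|x_i|$ approaching $|b_i|/A_{ii}$ from below and signs opposite to those of $b_i$, i.e.\ $x \to \bar x = \ominus\diag(A)^{-1}b$.

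For (ii), I combine (i) with a sandwich computation of $\val(\bfb^T\bfA^{-1}\bfb)$. The lower bound $\val(\bfb^T\bfA^{-1}\bfb) \geq \max_i(2|b_i|-A_{ii})$ follows from the Cauchy--Schwarz inequality in the $\bfA$-inner product, which gives $\bfb_i^2 \leq \bfA_{ii}\cdot \bfb^T\bfA^{-1}\bfb$ for each $i$. For the upper bound, Cramer's rule $\bfA^{-1} = (\com\bfA)^T/\det\bfA$, combined with the fact that the strict tropical positive definiteness makes the identity permutation the unique dominant monomial of $\det\bfA$ and of each principal minor, yields $\val(\bfA^{-1}_{ii}) = -A_{ii}$; the usual Cauchy--Schwarz applied to the classically positive definite matrix $\bfA^{-1}$ then gives $\val(\bfA^{-1}_{ij}) \leq -(A_{ii}+A_{jj})/2$, so every term $\bfb_i\bfb_j\bfA^{-1}_{ij}$ has valuation at most $\max_k(2|b_k|-A_{kk})$. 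Matching the two bounds and using that $\bfb^T\bfA^{-1}\bfb > 0$ classically, we obtain $\sval(-\bfb^T\bfA^{-1}\bfb/4) = \ominus b^T\diag(A)^{-1}b$, in agreement with (i).

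For (iii), Cramer's rule gives $\bfx^*_i = -\sum_j(-1)^{i+j} M_{ji}\bfb_j/(2\det\bfA)$, where $M_{ji}$ denotes the $(j,i)$-minor of $\bfA$. Under genericity of $A$ and $b$, each permutation expansion and each linear combination appearing has a unique term of maximal modulus, so the signed valuation commutes with each of these sums: $\sval(\det\bfA) = \det A$, $\sval(M_{ji}) = \det A(j,i)$, and the signed valuation of the numerator of $\bfx^*_i$ equals $((\com A)^T b)_i$. Dividing and negating yields $\sval(\bfx^*_i) = \ominus((\com A)^T b)_i/\det A$. \emph{The main obstacle} is to formalize ``generic'' uniformly across all these expansions: it suffices to require that the $n!$ permutation weights appearing in $\det\bfA$, in each minor $M_{ji}$, and in the relevant linear combination all have pairwise distinct signed valuations, a condition which is open and dense on $(A,b)$.
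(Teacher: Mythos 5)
Your proposal is correct, and parts (i) and (iii) follow essentially the same route as the paper: (i) reduces $x^TAx$ to $x^T\diag(A)x$ using $|A_{ij}x_ix_j|\prec A_{ii}x_i^2\oplus A_{jj}x_j^2$ and then analyses the separable objective $\bigoplus_i (A_{ii}x_i^2\oplus b_ix_i)$ (the paper delegates the per-coordinate step to \Cref{p:univariate}, you redo it by hand), and (iii) is the same Cramer-rule-plus-genericity argument, with identity-dominance giving $\sval\det\bfA=\det A$. Where you genuinely diverge is (ii): the paper first rescales so that $A_{ii}=\unit$, invokes the tropical comatrix identity $(\com A')^T=I\oplus C\oplus\dots\oplus C^{n-1}$ from \cite{adi} to bound the off-diagonal cofactors, and then argues that in $\bfb'^T(\com\bfA')^T\bfb'$ only the diagonal terms attain the maximal valuation and all carry positive sign, so no cancellation occurs. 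You instead sandwich $\val(\bfb^T\bfA^{-1}\bfb)$ directly: the lower bound $\bfb_i^2\leq\bfA_{ii}\,\bfb^T\bfA^{-1}\bfb$ by Cauchy--Schwarz for the $\bfA$-form, and the upper bound from $\val(\bfA^{-1}_{ii})\leq -A_{ii}$ (Cramer plus identity-dominance of the principal minors) together with Cauchy--Schwarz for $\bfA^{-1}$, the sign then being forced by classical positivity of $\bfb^T\bfA^{-1}\bfb$. This buys you two things: you avoid the external identity of \cite{adi}, and the matching bounds make any discussion of cancellation among cofactor terms unnecessary; the paper's route, on the other hand, yields the finer structural information $(\com A')^T=C^*$ that it reuses in (iii). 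Two small caveats: both Cauchy--Schwarz applications need $\bfA$ (hence $\bfA^{-1}$) to be positive definite over $\K$ --- true, since after the diagonal scaling the lift is strictly diagonally dominant with positive diagonal, but you should say so, as the paper also tacitly relies on it through the formulas $-\bfb^T\bfA^{-1}\bfb/4$ and $\bfx^*=-\bfA^{-1}\bfb/2$; and in your closing remark on genericity, requiring the permutation weights to have ``pairwise distinct signed valuations'' is not quite the right condition (two weights of equal modulus and opposite signs are distinct in $\Smax^\vee$ yet cancel at the leading order) --- what you need, and what you correctly use earlier, is a unique term of maximal modulus in each expansion, i.e.\ pairwise distinct moduli.
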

\begin{proof}
(i): Since we have, for all $x\in(\Smax^\vee)^n$ such that $x_i\neq\zero$ and $x_j\neq \zero$,
\[
|x_iA_{ij}x_j|<|x_i|A_{ii}^{1/2}A_{jj}^{1/2}|x_j|\leq\max(x_i^2A_{ii},x_j^2A_{jj}), 
\]
it is easy to see that $x^TAx=x^T\operatorname{diag}(A)x$ holds
for all $x\in(\Smax^\vee)^n$. Using this we obtain that
\begin{equation*}
x^TAx\oplus b^Tx=\bigoplus_{i=1}^n A_{ii}x_i^2\oplus b_ix_i\enspace .
\end{equation*}
Minimizing $a_{ii}x_i^2\oplus b_ix_i$ for each $i$ we find (say, using Proposition \ref{p:univariate}) that since the only root is 
$\ominus A_{ii}^{-1}b_i$, the optimal value is attained by a minimizing sequence converging to this root, and the optimal value is
$\ominus b_i A_{ii}^{-1}b_i$. The optimal value of~\eqref{e:quadropt} is then obtained by summing up these values, thus it is
$\ominus b^T\operatorname{diag}(A)^{-1}b$, and it is attained by a minimizing sequence of vectors converging to 
$\ominus\operatorname{diag}(A)^{-1}b$.   

(ii): We make the change of variables
$x=Dx'$, where $D$ is the diagonal matrix $D$ with entries
  $D_{ii}=A_{ii}^{-1/2}$, which amounts to replacing $A$ with $A'=DAD$ and 
  $b$ with $b'=Db$ in \eqref{e:quadropt}. We then have $A'_{ii}=\unit$ for all $i$
and $A'_{ii}A'_{jj}=\unit > (A'_{ij})^2$ for all $i\neq j$, implying that $\det A' =\unit$.
Then, we write $A'=I\ominus C$, where $C$ is a matrix with diagonal entries
equal to $\zero$, and apply
the last statement of Akian, Gaubert and Niv~\cite{adi}, Theorem~2.39,
which entails that
\begin{align}\label{e-comastar}
  (\com A')^T  = C^*\coloneqq I \oplus C \oplus \dots\oplus  C^{n-1}
  \enspace .
  \end{align}
  We also make the change of variables $\bfx = \mathbf{D} \bfx'$, where $\mathbf{D}\in \K^{n\times n}$
  is the diagonal matrix with entries $\mathbf{D}_{ii}=t^{D_{ii}}$, and set $\bfA'\coloneqq
  \mathbf{D}\bfA \mathbf{D}$ and $\bfb'\coloneqq \mathbf{D}\mathbf b$.
  The value of the ``lifted'' problem~\eqref{e:quadroptlift} is given
  by
  \begin{align}
  \mathbf{v}= -\frac{\det(\bfA')^{-1}}{4} \bfb'^T (\com \bfA')^T \bfb' = -\frac{\det(\bfA')^{-1}}{4} \sum_{i,j\in[n]} \bfb'_i (\com\bfA')_{ji}
  \bfb'_j \enspace .\label{e-sumexpand}
  \end{align}
  Since $C_{ij}<\unit $ for all $i\neq j$, we deduce from~\eqref{e-comastar} that, for $i\neq j$,
  \[
  \val (\com \bfA')^T_{ij} \leq |C_{ij}^*|<\unit \enspace .
  \]
  Moreover, $\sval(\com\bfA')^T_{ii}=A'_{ii}=\unit$. Then, for $i\neq j$,
  $\val(\bfb'_i (\com\bfA')_{ji}\bfb'_j )< \val({\bfb'}_i^2) \oplus \val({\bfb'}_j^2)
  = \val(\bfb'_i (\com \bfA')^T_{ii}\bfb'_i) \oplus \val(\bfb'_j (\com \bfA')^T_{jj}\bfb'_j)$.
  Hence,
  in the sum in~\eqref{e-sumexpand},
  only the diagonal terms, obtained when $i=j$, have a maximal valuation. We also observe that
  the sign of each of these terms
  is positive. Moreover, $\sval\det(\bfA')=\unit$. It follows
  that 
  $\sval \mathbf{v} =   \ominus b'^T (\det A')^{-1}(\com A')^T b'$. Using~\eqref{e-comastar} and that $\det A' =\unit$ this becomes 
  \[
  \ominus\bigoplus_{i,j\in [n]} b'_{i}C^*_{ij}b'_j=
  \ominus \bigoplus_{i\in[n]} b_iA_{ii}^{-1}b_i =\ominus b^T \diag(A)^{-1}b,
  \] 
  as claimed.

(iii):  
  If the entries of $A$ are generic, we noted above that
  the matrix $\com A = \sval \com \bfA$ is signed.
  If in addition the entries of $b$ are generic, then the entries of ${\com A}^Tb$ are also signed and ${\com A}^Tb = \sval {\com \bfA}^T \bfb$.
  We finally claim that, since $A$ is positive definite,
  we have
  \begin{align}
    \label{showdet}
    \det A = \bigodot_{i\in [n]}A_{ii}
    =\sval \det\bfA\enspace .
  \end{align}
  Indeed, after making the change of variables of (ii),
we may assume that $A_{ii}=\unit$ for all $i$,
  with $A_{ii}A_{jj}=\unit > A_{ij}^2$. Considering the expansion
  of $\det A$, we see that the identity permutation yields
  the unique term with maximal modulus, from which~\eqref{showdet}
  readily follows. These facts imply that $x^*$ is the image by signed valuation of 
  $\bfx^*$.
\end{proof}
\begin{remark}
  The valuation of the optimal solution $x^*=\sval \bfx^*$ may differ
  from the vector $\bar{x} = \ominus \operatorname{diag}(A)^{-1} b$
  defined in (i). Indeed, consider
  \[
  A = \left(\begin{array}{cc} 0 & \ominus (-1)\\
    \ominus (-1) & 0 \end{array}\right),
  \quad \bfA = 
   \left(\begin{array}{cc} 1 & - t^{-1}\\
     - t^{-1} & 1 \end{array}\right),
   \quad
   b = \left(\begin{array}{c} 0\\ \theta\end{array}\right),
\quad\bfb= 
   \left(\begin{array}{c} 1\\ t^{\theta}
\end{array}\right),
   \]
   with $\theta\in\R\subset \Smax^\vee$,  so that $A=\sval \bfA$
   and $b=\sval \bfb$. We have $\bfx^*= -(1-t^{-2})^{-1}(1 + t^{-1} t^{\theta},
   t^\theta + t^{-1})^T$, $x^*=\ominus (\unit \oplus (-1 +\theta), \theta \oplus (-1))^T$,
   whereas $\bar{x}=\ominus (0,\theta)^T$. We observe that for $\theta>1$,
   $x^*=\ominus((-1+\theta), \theta)^T \neq \bar{x}$.
\end{remark}
\section{Concluding remarks}

In this paper we have characterized the polars of subsets $A$ of $\Rmax^n$ in terms of signed elimination cones (cones of signed vectors, stable under a special addition). Upon introducing and characterizing the tropical analogues of positive semidefinite matrices, completely positive semidefinite matrices and copositive matrices  we showed that both the hierarchy of classical matrix cones and the hierarchy of their polars collapse under signed tropicalization. We also studied some optimization problems over symmetrized tropical semiring. 
Let us now also mention the following two future research directions arising from this research.

Firstly, instead of defining the tropical polar of a subset $A$ of $\Rmax^n$ we may more generally consider a subset $A$ of $(\Smax^\vee)^n$
and still define its signed polar as in~\eqref{e-def-signedpolar}. This leads
to a larger class of signed polars, and it would be interesting
to extend the present results to this class.


Secondly, in Section~\ref{s:optimization}, we only made some ``first steps'' in optimization over signed tropical numbers, by considering
the tropical quadratic feasibility problem and the unconstrained optimization of tropical polynomials of one variable and tropical quadratic functions. In particular, we showed that the membership in the tropical copositive cone can be checked in polynomial time, whereas the
analogous problem in the classical world is NP-hard. Further relations between tropical
and classical optimization and applications of such relations are to be explored in the future.

We also note that the nature of this work is theoretical and our conclusions and findings are not relevant to any data set.

\section*{Acknowledgement}
The work of S. Sergeev and his visits to CMAP \'{E}cole Polytechnique were partially supported by EPSRC Grant EP/P019676/1.



\end{document}